\renewcommand{\citepunct}{;\penalty\citemidpenalty\ }
\newtheorem{theorem}{Theorem}[section]
\newtheorem{corollary}[theorem]{Corollary}
\newtheorem{lemma}[theorem]{Lemma}
\newtheorem{proposition}[theorem]{Proposition}
\newenvironment{customthm}[1]
  {\innercustomthm}
  {\endinnercustomthm}
\theoremstyle{definition}
\newtheorem{definition}[theorem]{Definition}
\newtheorem{example}[theorem]{Example}
\theoremstyle{remark}
\newtheorem{remark}[theorem]{Remark}
\newtheoremstyle{cited}{.5\baselineskip\@plus.2\baselineskip\@minus.2\baselineskip}{.5\baselineskip\@plus.2\baselineskip\@minus.2\baselineskip}{\itshape}{}{\bfseries}{\bfseries .}{5pt plus 1pt minus 1pt}{\thmname{#1}\thmnumber{ #2}\thmnote{ \normalfont#3}}
\theoremstyle{cited}
\newtheorem{citedthm}[theorem]{Theorem}
\newtheorem{citedlem}[theorem]{Lemma}
\newtheoremstyle{citeddef}{.5\baselineskip\@plus.2\baselineskip\@minus.2\baselineskip}{.5\baselineskip\@plus.2\baselineskip\@minus.2\baselineskip}{}{}{\bfseries}{\bfseries .}{5pt plus 1pt minus 1pt}{\thmname{#1}\thmnumber{ #2}\thmnote{ \normalfont#3}}
\theoremstyle{citeddef}
\newtheorem{citeddef}[theorem]{Definition}
\DeclareMathOperator{\Ext}{Ext}
\DeclareMathOperator{\Min}{Min}
\DeclareMathOperator{\Spec}{Spec}
\DeclareMathOperator{\height}{ht}
\DeclareMathOperator{\FI}{FI}
\newcommand{\FF}{\mathbf{F}}
\newcommand{\PP}{\mathbf{P}}
\newcommand{\cO}{\mathcal{O}}
\newcommand{\cP}{\mathcal{P}}
\newcommand{\fm}{\mathfrak{m}}
\newcommand{\fn}{\mathfrak{n}}
\newcommand{\fp}{\mathfrak{p}}
\newcommand{\fq}{\mathfrak{q}}
\newcommand{\id}{\mathrm{id}}
\newcommand{\red}{\mathrm{red}}
\begin{document}
\title{Permanence properties of \emph{F}-injectivity}
\subjclass[2020]{Primary 13A35; Secondary 13D45, 13H10, 14J17}
\keywords{$F$-injective ring, local cohomology, base change, generic projection}

\author{Rankeya Datta}
\thanks{RD was supported by an AMS-Simons travel grant.}
\address{Department of Mathematics\\University of Missouri\\Columbia, MO 65212\\USA}
\email{\href{mailto:rankeya.datta@missouri.edu}{rankeya.datta@missouri.edu}}
\urladdr{\url{https://www.rankeyadatta.com}}

\author{Takumi Murayama}
\thanks{TM was supported by the National Science
Foundation under Grant Nos.\ DMS-1701622 and DMS-1902616}
\address{Department of Mathematics\\Purdue University\\West Lafayette, IN
47907-2067\\USA}
\email{\href{mailto:murayama@purdue.edu}{murayama@purdue.edu}}
\urladdr{\url{https://www.math.purdue.edu/~murayama/}}

\makeatletter
  \hypersetup{
    pdfauthor={Rankeya Datta and Takumi Murayama},
    pdfsubject=\@subjclass,pdfkeywords={F-injective ring, local cohomology, base change, generic projection}
  }
\makeatother

\begin{abstract}
We prove that $F$-injectivity localizes, descends under faithfully flat homomorphisms, and ascends under flat homomorphisms with Cohen--Macaulay and geometrically $F$-injective fibers, all for arbitrary Noetherian rings of prime characteristic. As a consequence, we show that the $F$-injective locus is open on most rings arising in arithmetic and geometry.
As a geometric application, we prove that over an algebraically closed field of
characteristic $p > 3$, generic projection hypersurfaces associated to suitably
embedded smooth projective varieties of dimension $\le 5$ are $F$-pure, and
hence $F$-injective.
This geometric result is the positive characteristic analogue of a theorem of Doherty.
\end{abstract}

\maketitle

\setcounter{tocdepth}{1}
\tableofcontents

\section{Introduction}
Let $\cP$ be a property of Noetherian local rings, and denote by $(R,\fm)$ a Noetherian local ring. Reasonably well-behaved properties $\cP$ usually satisfy the following:
\begin{enumerate}[label=(\Roman*),ref=\Roman*]
  \item\label{property:localization}
    (Localization) $R$ is $\cP$ if and only if $R_\fp$ is $\cP$ for every
    prime ideal $\fp \subseteq R$.
  \item\label{property:descent}
    (Descent) If $(R,\fm) \to (S,\fn)$ is a flat local homomorphism of
    Noetherian local rings, and $S$ is $\cP$, then $R$ is $\cP$.
  \item\label{property:ascent}
    (Ascent/Base change) If $(R,\fm) \to (S,\fn)$ is a flat local homomorphism of Noetherian local rings, and both $R$ and the closed fiber $S/\fm S$ are $\cP$, then $S$ is $\cP$.
  \item\label{property:openness}
    (Openness) If $(R, \fm)$ is excellent local, then the locus $\{\fq \in \Spec(R) \mid
    R_\fq\ \text{is}\ \cP\}$ is open.
\end{enumerate}
As an illustration, (\ref{property:localization})--(\ref{property:openness}) hold for many classical properties $\cP$:
\begin{center}
  \scriptsize
  \begin{tabular}{cccc}
    \toprule
    $\cP$ & (\ref{property:localization}) & (\ref{property:descent}) $+$
    (\ref{property:ascent}) & (\ref{property:openness})\\
    \cmidrule(lr){1-1} \cmidrule(lr){2-4}
    regular & \cite[Thm.\ 19.3]{Mat89} & \cite[Thm.\ 23.7]{Mat89} & \cite[Def.\
    on p.\ 260]{Mat89}\\
    normal & \cite[Def.\ on p.\ 64]{Mat89} & \cite[Cor.\ to Thm.\ 23.9]{Mat89} &
    \cite[Prop.\ 7.8.6$(iii)$]{EGAIV2}\\
    reduced & \cite[Prop.\ II.2.17]{BouCA} & \cite[Cor.\ to Thm.\ 23.9]{Mat89} &
    \cite[Prop.\ 7.8.6$(iii)$]{EGAIV2}\\
    complete intersection & \cite[Cor.\ 1]{Avr75} & \cite[Thm.\ 2]{Avr75} &
    \cite[Cor.\ 3.3]{GM78}\\
    Gorenstein & \cite[Thm.\ 18.2]{Mat89} & \cite[Thm.\ 23.4]{Mat89} &
    \cite[Rem.\ $(b)$ on p.\ 213]{GM78}\\
    Cohen--Macaulay & \cite[Thm.\ 17.3$(iii)$]{Mat89} & \cite[Cor.\ to Thm.\
    23.3]{Mat89} & \cite[Prop.\ 7.8.6$(iii)$]{EGAIV2}\\
    \bottomrule
  \end{tabular}
 \end{center}

\medskip
\par In this paper, we are interested in assertions
(\ref{property:localization})--(\ref{property:openness})
for $F$-singularities.
The theory of $F$-singularities arose in the work of Hochster--Roberts
\cite{HR76} and Hochster--Huneke \cite{HH90} in tight closure theory, and in the work of Mehta--Ramanathan \cite{MR85} and Ramanan--Ramanathan \cite{RR85} in the theory of Frobenius splittings. Part of their motivation was to detect singularities of rings of prime characteristic $p > 0$ using the Frobenius homomorphism $F_R\colon R
\to F_{R*}R$, a starting point for which was Kunz's characterization of  regularity of a Noetherian ring $R$ in terms of the flatness of $F_R$ \cite[Thm.\ 2.1]{Kun69}.
The most common classes of $F$-singularities are related in the following
fashion (see Remark \ref{rem:implications} for details):
\[
  \begin{tikzcd}
    \text{strongly $F$-regular}
    \rar[Rightarrow] \dar[Rightarrow] &
    \text{$F$-rational} \dar[Rightarrow]\\
    \text{$F$-pure} \rar[Rightarrow]
    & \text{$F$-injective}\mathrlap{.}
  \end{tikzcd}
\]
For three of the four classes of $F$-singularities listed above, we know that assertions (\ref{property:localization})--(\ref{property:openness}) hold in some situations:
\begin{center}\scriptsize
  \begin{tabular}{ccccc}
    \toprule
    $\cP$ & (\ref{property:localization}) & (\ref{property:descent}) &
    (\ref{property:ascentstar}) & (\ref{property:openness})\\
    \cmidrule(lr){1-1} \cmidrule(lr){2-5}
    strongly $F$-regular & \cite[Lem.\ 3.6]{Has10} & \cite[Lem.\
    3.17]{Has10} & \cite[Lem.\ 3.28]{Has10} & \cite[Prop.\ 3.33]{Has10}\\
    $F$-rational & Proposition \ref{prop:frational}$(\ref{prop:frationalcm})$ &
    Proposition \ref{prop:frationaldescends} 
        & \cite[Thm.\ 3.1]{Vel95} & \cite[Thm.\ 3.5]{Vel95}\\
    $F$-pure & \cite[Lem.\ 6.2]{HR74} & \cite[Prop.\ 5.13]{HR76} & \cite[Prop.\
    2.4(4)]{Has10} & \cite[Cor.\ 3.5]{Mur}\\
    \bottomrule
  \end{tabular}
\end{center}
\smallskip
Here, strong $F$-regularity is defined in terms of tight closure as in
\cite[Def.\ 3.3]{Has10} for Noetherian rings that are not necessarily $F$-finite
(following Hochster), and (\ref{property:ascentstar}) is the
following special case of (\ref{property:ascent}):
\begin{enumerate}[label=(\Roman*),ref=\Roman*]
  \item[(\ref*{property:ascent}*)]
    \refstepcounter{enumi}
    \makeatletter
    \def\@currentlabel{\ref*{property:ascent}*}
    \makeatother
    (Ascent/Base change for regular homomorphisms) If $(R,\fm) \to (S,\fn)$ is a flat
    local homomorphism of Noetherian local rings with geometrically regular fibers,
    and $R$ is $\cP$, then $S$ is $\cP$.\label{property:ascentstar}
\end{enumerate}
Property (\ref{property:ascent}) and not just (\ref{property:ascentstar}) also holds for $F$-purity by \cite[Prop.\ 4.8]{SZ13}.
In the $F$-finite setting, where strong $F$-regularity was first defined by
Hochster and Huneke via a splitting condition, (\ref{property:localization}) is
follows from \cite[Thm.\ 5.5$(a)$]{HH94}, (\ref{property:descent}) follows from
\cite[Thm.\ 5.5$(b)$]{HH94}, and (\ref{property:openness}) follows from
\cite[Thm.\ 5.9$(b)$]{HH94}.
Furthermore, (\ref{property:ascent}) holds for $F$-finite strongly $F$-regular
rings by adapting the argument in \cite[Thm.\ 7.3]{HH94};
see \cite[Cor.\ 4.6]{SZ13}. 

We note that (\ref{property:ascentstar}) for strong $F$-regularity only holds under the additional assumption that $S$ is excellent, and (\ref{property:localization}) (resp.\ (\ref{property:ascentstar})) for $F$-rationality only holds under the additional assumption that $R$ is the image of a Cohen--Macaulay ring (resp.\ that $R$ and $S$ are excellent). In all three cases, (\ref{property:openness}) is also known to hold for rings essentially of finite type over excellent local rings, but not for arbitrary excellent rings. 

\par For $F$-injectivity,
(\ref{property:localization}) and
(\ref{property:openness}) appear in the literature in the $F$-finite setting
\citeleft\citen{Sch09}\citemid Prop.\ 4.3\citepunct \citen{QS17}\citemid Prop.\
3.12\citeright, and (\ref{property:descent}) follows by combining
(\ref{property:localization}) in the $F$-finite case with \cite[Lemma A.3]{Mur}.
On the other hand, (\ref{property:ascentstar})
seems to be completely open when the base ring is not assumed to be Cohen--Macaulay. We recall that a ring $R$ of prime characteristic $p > 0$ is \textsl{$F$-injective} if, for every maximal ideal $\fm \subseteq R$, the Frobenius action $H^i_\fm(F_{R_\fm})\colon H^i_\fm(R_\fm) \to H^i_\fm(F_{R_\fm*}R_\fm)$ is injective for every $i$.
$F$-injective rings are related to rings with Du Bois singularities in
characteristic zero \cite{Sch09}.
\medskip

\par The aim of this paper is to address assertions
(\ref{property:localization})--(\ref{property:openness}) for $F$-injectivity. For (\ref{property:localization}), we show that $F$-injectivity localizes for arbitrary Noetherian rings (Proposition \ref{prop:finjlocalizes}), extending results of Schwede \cite[Prop.\ 4.3]{Sch09} and Hashimoto \cite[Cor.\ 4.11]{Has10}. We then prove (\ref{property:descent}), i.e., that $F$-injectivity descends under faithfully flat homomorphisms of Noetherian rings (Theorem \ref{thm:descentF-injective}). This latter statement extends results of Hashimoto \citeleft\citen{Has01}\citemid Lem.\ 5.2\citepunct \citen{Has10}\citemid Lem.\ 4.6\citeright\ and the second author \cite[Lem.\ A.3]{Mur}.

Our main theorem resolves (\ref{property:ascentstar}) for $F$-injectivity by proving a more general statement.

\begin{customthm}{\ref{thm:finjascendscmfi}}
Let $\varphi\colon (R,\fm) \to (S,\fn)$ be a flat local homomorphism of
Noetherian local rings of prime characteristic $p > 0$ whose closed fiber $S/\fm S$ is Cohen--Macaulay and geometrically $F$-injective over $R/\fm$. If $R$ is $F$-injective, then $S$ is $F$-injective.
\end{customthm}

\noindent Theorem \ref{thm:finjascendscmfi}, and its non-local version (Corollary \ref{cor:finjascendscmfinonlocal}), are new even when $\varphi$ is regular (i.e.\ flat with geometrically regular fibers) or smooth.
Under the additional hypothesis that $R$ is Cohen--Macaulay, Theorem
\ref{thm:finjascendscmfi} is due to Hashimoto \cite[Cor.\ 5.7]{Has01} and Aberbach--Enescu \cite[Thm.\ 4.3]{Ene09}. We note that (\ref{property:ascent}) fails when $\cP$ is the property of being $F$-injective, even if the closed fiber $S/\fm S$ is regular \cite[\S4]{Ene09}. This indicates that some geometric assumptions  on the closed fiber $S/\fm S$ are needed for a version of Theorem \ref{thm:finjascendscmfi}, and consequently, (\ref{property:ascent}), to hold for $F$-injectivity.
\par The Cohen--Macaulayness of $S/\fm S$ is used to decompose the local cohomology of
$S$ as a tensor product (see \S\ref{sect:ascent}), and also applies the
characterization of Cohen--Macaulay $F$-injective rings using Frobenius closure
essentially due to Fedder and Watanabe \cite[Rem.\ 1.9 and Prop.\ 2.2]{FW89} to
the closed fiber $S/\fm S$ (see
Lemma \ref{lem:cmfiparam}).
If one could show that Theorem \ref{thm:finjascendscmfi} holds
without this assumption, then this would also show that $F$-injectivity deforms (see the
proof of \cite[Lem.\ 5.9]{Has01}). The latter remains one of the most important open problems about $F$-injectivity, and Theorem \ref{thm:finjascendscmfi} provides evidence in support of this problem since we are now able to drop the Cohen--Macaulay assumption on the base ring. \medskip

\par As an application of our main theorem, we prove that $F$-injectivity is an open condition for most rings that arise in geometric applications, in particular resolving (\ref{property:openness}) and answering a question of the second author \cite[Rem.\ 3.6]{Mur}.

\begin{customthm}{\ref{thm:finjlocusopen}}
Let $R$ be a ring essentially of finite type over a Noetherian local ring $(A,\fm)$ of prime characteristic $p > 0$, and suppose that $A$ has Cohen--Macaulay and geometrically $F$-injective formal fibers. Then, the $F$-injective locus is open in $\Spec(R)$.
\end{customthm}

\noindent The condition on formal fibers holds if $A$ is excellent. At the same time, we also observe that the $F$-injective locus need not be open if $R$ is merely locally excellent (see Example \ref{ex:finjnotopen}).

\medskip
\par Finally, we give a geometric application of our results.
Bombieri \cite[p.\
209]{Bom73} and Andreotti--Holm \cite[p.\ 91]{AH77} asked whether the image of a
smooth projective variety of dimension $r$ under a generic projection to
$\PP^{r+1}_k$ is weakly normal.
Greco--Traverso \cite[Thm.\ 3.7]{GT80} proved that this is indeed the case over
the complex numbers, and the case in arbitrary characteristic follows from work
of Roberts--Zaare-Nahandi
\citeleft\citen{ZN84}\citemid Thm.\ 3.2\citepunct\citen{RZN84}\citemid Thm.\
1.1\citeright\ and Cumino--Manaresi \cite[Thm.\ 3.8]{CM81} (see \cite[Obs.\
2.4$(v)$]{CGM89}).
As an application of our results, we show that generic projection hypersurfaces
associated to suitably embedded smooth varieties of low dimension are in fact
$F$-pure (hence also $F$-injective) in positive characteristic, which is a
stronger condition than being weakly normal \cite[Thm.\ 7.3]{SZ13}.

\begin{customthm}{\ref{thm:dohertyanalogue}}
Let $Y \subseteq \PP^n_k$ be a smooth projective variety of dimension $r \le 5$ over an algebraically closed field $k$ of characteristic $p > r$, such that $Y$ is embedded via the $d$-uple embedding with $d \ge 3r$. If $\pi\colon Y \to \PP^{r+1}_k$ is a generic projection and $X = \pi(Y)$, then $X$ is $F$-pure, and hence $F$-injective.
\end{customthm}

\noindent The assumption on characteristic is needed to rule out some
exceptional cases in the classification in \cite[(13.2)]{Rob75}, and the $d$-uple embedding is used to ensure that $Y$ is appropriately embedded in the sense of \cite[\S9]{Rob75}. Theorem \ref{thm:dohertyanalogue} is the positive characteristic analogue of a theorem of Doherty \cite[Main Thm.]{Doh08}, who proved that over the complex numbers, the generic projections $X$ have semi-log canonical singularities. 
An
example of Doherty also shows that generic projections of smooth projective varieties
of large dimension are not necessarily $F$-pure (see Example
\ref{ex:dohertyhighdim}).

\subsection*{Notation}
All rings will be commutative with identity.
If $R$ is a ring of prime characteristic $p > 0$, then the \textsl{Frobenius
homomorphism} on $R$ is the ring homomorphism
\[
  \begin{tikzcd}[column sep=1.475em,row sep=0]
    \mathllap{F_R\colon} R \rar & F_{R*}R\\
    r \rar[mapsto] & r^p\mathrlap{.}
  \end{tikzcd}
\]
The notation $F_{R*}R$ is used to emphasize the fact the target of the Frobenius homomorphism has the (left) $R$-algebra structure given by $a \cdot r = a^pr$. 
For every integer $e \ge 0$, we denote the $e$-th iterate of the Frobenius homomorphism by $F^e_R\colon R \to F^e_{R*}R$.
If $I \subseteq R$ is an ideal, we define the \textsl{$e$-th Frobenius power}
$I^{[p^e]}$ to be the ideal $I \cdot F^e_{R*}R \subseteq F^e_{R*}R$
generated by $p^e$-th powers of elements in $I$.

\par Local cohomology modules are defined by taking injective resolutions in the category of sheaves of Abelian groups on spectra, as is done in \cite[Exp.\ I, D\'ef.\ 2.1]{SGA2}. When local cohomology is supported at a finitely generated ideal, this definition matches the \v{C}ech or Koszul definitions for local cohomology, even without Noetherian hypotheses \cite[Exp.\ II, Prop.\ 5]{SGA2}.

\subsection*{Acknowledgments}
We are grateful to Florian Enescu, Mihai Fulger, Mitsuyasu Hashimoto, Jack Jeffries, J\'anos Koll\'ar, Linquan Ma, Lance Edward Miller, Thomas Polstra, Karl Schwede, Kazuma Shimomoto, Kevin Tucker, and Farrah Yhee for helpful discussions. We would especially like to thank Melvin Hochster for insightful conversations and comments on early drafts of this paper. We are grateful to Farrah Yhee for pointing out the reference \cite{Ngu12}. The second author would also like to thank his advisor Mircea Musta\c{t}\u{a} for his constant support and encouragement. We thank the referees for their careful reading of the paper and helpful comments.

\section{Definitions and preliminaries}\label{sect:defsprelim}
We collect some basic material on $F$-injective rings and on the relative Frobenius homomorphism, which will be essential in the rest of this paper.

\subsection{\emph{F}-injective and CMFI rings} 
We start by defining $F$-injective rings, which were introduced by Fedder
\cite{Fed83}.

\begin{citeddef}[{\citeleft\citen{Fed83}\citemid Def.\ on p.\
  473\citepunct\citen{Ene00}\citemid p.\ 546\citeright}]\label{def:FI}
A Noetherian local ring $(R,\fm)$ of prime characteristic $p > 0$ is \textsl{$F$-injective} if for all integers $i \geq 0$, the $R$-module homomorphism
  \[
    H^i_\fm(F_R) \colon H^i_\fm(R) \longrightarrow H^i_\fm(F_{R*}R)
  \]
  is injective.
  An arbitrary Noetherian ring $R$ of prime characteristic $p > 0$ is \textsl{$F$-injective} if $R_\fm$ is $F$-injective for every maximal ideal $\fm \subseteq R$. A locally Noetherian scheme $X$ of prime characteristic $p > 0$ is \textsl{$F$-injective} if there exists an affine open cover $\{\Spec(R_\alpha)\}_\alpha$ of $X$ such that
  $R_\alpha$ is $F$-injective
  for all $\alpha$.
  
\par Now let $R$ be a Noetherian $k$-algebra, where $k$ is a field of characteristic $p > 0$. We say that $R$ is \textsl{geometrically $F$-injective over $k$} if for every finite purely inseparable extension $k \subseteq k'$, the ring $R \otimes_k k'$ is $F$-injective. Similarly, if $X$ is a locally Noetherian $k$-scheme, we say that $X$ is \textsl{geometrically $F$-injective over $k$} if for every finite purely inseparable extension $k \subseteq k'$, the scheme $X \times_k \Spec(k')$ is $F$-injective.
\end{citeddef}

\begin{remark}
Our definition of geometric $F$-injectivity follows \cite[p.\ 546]{Ene00}. The definition in \cite[Def.\ 5.3]{Has01} is a priori stronger, since it asserts that $R \otimes_k k'$ is $F$-injective for \emph{all} finite extensions $k \subseteq k'$. We will see that these definitions are equivalent (Proposition \ref{prop:geomfinjextensions}$(\ref{prop:geomfinjseparable})$).
\end{remark}

We also define the following:

\begin{citeddef}[{\citeleft\citen{Has01}\citemid p.\ 238\citepunct\citen{Has10}\citemid (4.1)\citeright}]\label{def:cmfi}
A Noetherian ring $R$ of prime characteristic $p > 0$ is \textsl{Cohen--Macaulay $F$-injective} or \textsl{CMFI} if $R$ is both Cohen--Macaulay and $F$-injective. Similarly, a locally Noetherian scheme $X$ of prime characteristic $p > 0$ is \textsl{CMFI} if $X$ is Cohen--Macaulay and $F$-injective.

\par A Noetherian $k$-algebra (resp.\ a locally Noetherian $k$-scheme) is
\textsl{geometrically CMFI over $k$} if it is Cohen--Macaulay and geometrically
$F$-injective over $k$.
\end{citeddef}

\begin{remark}\label{rem:gcmiscm}
Since the notions of ``geometrically Cohen--Macaulay'' and ``Cohen--Macaulay'' coincide \cite[Rem.\ on p.\ 182]{Mat89}, a Cohen--Macaulay $k$-algebra $R$
is geometrically CMFI as in Definition \ref{def:cmfi} if and only if for every finite purely inseparable extension $k \subseteq k'$, the ring $R \otimes_k k'$
is $F$-injective.
\end{remark}

We will use a characterization of CMFI rings in terms of Frobenius closure of ideals.

\begin{citeddef}[{\cite[(10.2)]{HH94}}]
Let $R$ be a ring of prime characteristic $p > 0$. If $I \subseteq R$ is an ideal, then the \textsl{Frobenius closure} of $I$ in $R$ is
\[
I^F \coloneqq \bigl\{ x \in R \bigm\vert x^{p^e} \in I^{[p^e]}\ \text{for some}\ e > 0 \bigr\}.
\]
We say that $I$ is \textsl{Frobenius closed} if $I = I^F$.
\end{citeddef}

CMFI rings can be characterized in terms of Frobenius closure of ideals generated by systems of parameters.

\begin{citedlem}[{\citeleft\citen{FW89}\citemid Rem.\ 1.9 and Prop.\ 2.2\citepunct\citen{Has10}\citemid Lem.\ 4.4\citepunct\citen{QS17}\citemid Thm.\ 3.7 and Cor.\ 3.9\citeright}]\label{lem:cmfiparam}
Let $(R,\fm)$ be a Cohen--Macaulay local ring of prime characteristic $p > 0$. Then, the following are equivalent:
  \begin{enumerate}[label=$(\roman*)$,ref=\roman*]
    \item The ring $R$ is $F$-injective.\label{lem:cmfiparamfinj}
    \item Every ideal generated by a system of parameters for $R$ is Frobenius closed.\label{lem:cmfiallparam}
    \item There exists an ideal generated by a system of parameters for $R$ that is Frobenius closed.\label{lem:cmfioneparam}
  \end{enumerate}
Moreover, even if $R$ is not Cohen--Macaulay, we have $(\ref{lem:cmfiallparam}) \Rightarrow (\ref{lem:cmfiparamfinj})$.
\end{citedlem}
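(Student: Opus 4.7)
The plan is to reduce everything to a direct computation with the top local cohomology module $H^d_\fm(R)$, where $d = \dim R$. Since $R$ is Cohen--Macaulay, $H^i_\fm(R) = 0$ for $i < d$, so $F$-injectivity of $R$ is equivalent to the Frobenius action on $H^d_\fm(R)$ alone being injective. Fix a system of parameters $x_1,\ldots,x_d$; then $H^d_\fm(R) = \varinjlim_t R/(x_1^t,\ldots,x_d^t)$, with transition maps given by multiplication by $x_1 \cdots x_d$. Because $x_1,\ldots,x_d$ is a regular sequence in the CM case, the colon relation $(x_1^{t+1},\ldots,x_d^{t+1}) :_R (x_1 \cdots x_d) = (x_1^t,\ldots,x_d^t)$ forces each composite $R/(x_1^t,\ldots,x_d^t) \hookrightarrow H^d_\fm(R)$ to be injective, and under these embeddings the Frobenius action carries the class of $r$ in the $t$-th term to the class of $r^p$ in the $pt$-th term. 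Unwinding this, $H^d_\fm(F_R)$ is injective if and only if $(x_1^t,\ldots,x_d^t)$ is Frobenius closed for every $t \ge 1$. Running this argument with an arbitrary system of parameters $y_1,\ldots,y_d$ in place of $x_1^t,\ldots,x_d^t$ upgrades it to the full statement (i) $\Leftrightarrow$ (ii).

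The implication (ii) $\Rightarrow$ (iii) is immediate. For (iii) $\Rightarrow$ (i) in the CM case, by the first paragraph it suffices to promote Frobenius closedness of the single ideal $I_0 = (x_1,\ldots,x_d)$ furnished by (iii) to Frobenius closedness of all of its Koszul powers $(x_1^t,\ldots,x_d^t)$. I would carry this out by a socle reduction: $H^d_\fm(R)$ is Artinian with essential socle, so any nonzero kernel element of $H^d_\fm(F_R)$ has in its $R$-span a nonzero socle element $\eta$ with $F_R(\eta) = 0$. In the CM case, socle elements of $H^d_\fm(R)$ can be represented inside the image of $R/I_0 \hookrightarrow H^d_\fm(R)$ after multiplication by a suitable power of $x_1 \cdots x_d$, using the standard identification of $\Soc(R/(x_1^t,\ldots,x_d^t))$ with $(x_1\cdots x_d)^{t-1}\cdot \Soc(R/I_0)$ modulo $(x_1^t,\ldots,x_d^t)$, which depends crucially on the regularity of the sequence. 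Frobenius closedness of $I_0$ then forces the corresponding preimage in $R/I_0$ to be zero, contradicting $\eta \ne 0$.

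The main obstacle, and the substantive content beyond the CM case, is the unconditional implication (ii) $\Rightarrow$ (i) without any CM hypothesis, which is the content of Quy--Shimomoto. The difficulty is that the lower local cohomology modules $H^i_\fm(R)$ for $i < d$ need no longer vanish, and Frobenius must be shown to act injectively on each of them as well. The device is to replace a system of parameters by a \emph{filter regular sequence} $x_1,\ldots,x_n$ for $\fm$, which exists after a faithfully flat residue field enlargement if necessary, so that each $H^i_\fm(R)$ is still computed as $\varinjlim_t H^i(x_1^t,\ldots,x_n^t; R)$ from Koszul cohomology carrying a compatible Frobenius action. One then inducts on the cohomological degree $i$, at each stage invoking Frobenius closedness of the partial parameter ideals $(x_1^t,\ldots,x_i^t)$ from hypothesis (ii) together with the long exact sequences of local cohomology built from the filter regular sequence. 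Verifying that filter-regular Koszul cohomology is Frobenius-compatible throughout, and that injectivity propagates cleanly from $H^{i-1}_\fm(R)$ to $H^i_\fm(R)$ through the associated snake-lemma diagrams, is the principal technical step.
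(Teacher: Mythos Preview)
The paper does not give its own proof of this lemma; it is cited from \cite{FW89}, \cite{Has10}, and \cite{QS17} without argument. Your sketch is correct and aligns with the proofs in those sources. For the Cohen--Macaulay equivalences, the two inputs you isolate---injectivity of $R/(x_1^t,\ldots,x_d^t) \hookrightarrow H^d_\fm(R)$ coming from regularity of the sequence, and the identification $\Soc(R/I_0) \simeq \Soc(H^d_\fm(R))$ driving the socle reduction for $(iii) \Rightarrow (i)$---are exactly what is used in the cited arguments. For the unconditional implication $(ii) \Rightarrow (i)$, you correctly locate the content in \cite{QS17} and single out filter regular sequences as the main device; the argument there is organized around the Nagel--Schenzel isomorphism $H^i_\fm(R) \simeq H^0_\fm\bigl(H^i_{(x_1,\ldots,x_i)}(R)\bigr)$ for a filter regular sequence $x_1,\ldots,x_i$, which reduces each lower local cohomology module to a top local cohomology of a partial sequence, rather than the inductive long-exact-sequence propagation you outline. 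Your description is a reasonable heuristic for the same circle of ideas, but it does not match the actual structure of the proof in \cite{QS17}.
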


\subsection{The relative Frobenius homomorphism}
We recall the definition of the \textsl{relative Frobenius homomorphism} \cite[Exp.\ XV, D\'ef.\ 3]{SGA5}, which is also known as the \textsl{Radu--Andr\'e homomorphism} in the commutative algebraic literature.

\begin{definition}\label{def:radu-andre}
Let $\varphi\colon R \to S$ be a homomorphism of rings of prime characteristic $p > 0$. For every integer $e \ge 0$, consider the co-Cartesian diagram
  \[
    \begin{tikzcd}[column sep=4em]
      R \rar{F_R^e}\dar[swap]{\varphi} & F^e_{R*}R
      \arrow[bend left=30]{ddr}{F^e_{R*}\varphi}
      \dar{\varphi \otimes_R F^e_{R*}R}\\
      S \rar{\id_S \otimes_R F^e_R} \arrow[bend right=12,end
      anchor=west]{drr}[swap]{F^e_S} & S \otimes_R F^e_{R*}R
      \arrow[dashed]{dr}[description]{F^e_{S/R}}\\
      & & F^e_{S*}S
    \end{tikzcd}
  \]
in the category of rings. The \textsl{$e$-th Radu--Andr\'e ring} is the ring $S \otimes_R F^e_{R*}R$, and the \textsl{$e$-th relative Frobenius homomorphism} associated to $\varphi$ is the ring homomorphism
  \[
    \begin{tikzcd}[column sep=1.475em,row sep=0]
      \mathllap{F^e_{S/R}\colon} S \otimes_R F^e_{R*}R \rar & F^e_{S*}S\\
      s \otimes r \rar[mapsto] & s^{p^e} \varphi(r)\mathrlap{.}
    \end{tikzcd}
  \]
If $e = 1$, we denote $F^1_{S/R}$ by $F_{S/R}$. We also sometimes denote $F^e_{S/R}$ by $F^e_\varphi$.
\end{definition}

\begin{remark}
Even when $R$ and $S$ are Noetherian, the Radu--Andr\'e rings $S \otimes_R F^e_{R*}R$ are not necessarily Noetherian. For these rings to be Noetherian, it suffices, for example, for $R$ to be $F$-finite\footnote{Recall that a ring $R$ of prime characteristic $p > 0$ is \textsl{$F$-finite} if the Frobenius map $F_R$ is module-finite.}, or for $R \to S$ to be $F$-pure in the sense of Definition \ref{def:fpuremap} below \cite[Lem.\ 2.14]{Has10}. See \citeleft\citen{Rad92}\citemid Thm.\ 7\citepunct\citen{Dum96}\citemid Thm.\ 4.4\citepunct\citen{Has01}\citemid Lem.\ 4.2\citeright\ for more results on the Noetherianity of $S \otimes_R F^e_{R*}R$.
\end{remark}

Radu and Andr\'e used the homomorphism $F_{S/R}$ to give the following
characterization of regular homomorphisms, that is, flat ring maps with geometrically regular fibers. Note that setting $\varphi$ to be the homomorphism $\FF_p \to R$ in the statement below, one recovers Kunz's characterization of regular rings \cite[Thm.\ 2.1]{Kun69}.

\begin{citedthm}[{\citeleft\citen{Rad92}\citemid Thm.\
  4\citepunct\citen{And93}\citemid Thm.\ 1\citeright}]\label{thm:raduandre}
A homomorphism $\varphi\colon R \to S$ of Noetherian rings of prime characteristic $p > 0$ is regular if and only if $F_{S/R}$ is flat.
\end{citedthm}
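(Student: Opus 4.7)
The plan is to prove both directions independently, in each case reducing by a localization-and-fibers argument to Kunz's theorem \cite[Thm.\ 2.1]{Kun69}, which characterizes regular Noetherian rings as those whose Frobenius is flat.

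For the forward direction ($\varphi$ regular implies $F_{S/R}$ flat), since flatness is a stalkwise condition and $\Spec(S \otimes_R F_{R*}R) \to \Spec(S)$ is a homeomorphism (as $F_R$ is integral and radicial), one may localize and assume $R$ and $S$ are Noetherian local with $\varphi$ local. By the local criterion for flatness, it then suffices to show flatness of the fiber $F_{S/R} \otimes_R k(\fp)$, where $\fp = \varphi^{-1}(\fn)$. A direct computation identifies this fiber with the relative Frobenius $F_{\bar{S}/k}$, where $k := k(\fp)$ and $\bar{S} := S \otimes_R k$. Identifying $F_{k*}k \cong k^{1/p}$ as $k$-algebras, geometric regularity of the fiber gives that $\bar{S}' := \bar{S} \otimes_k k^{1/p}$ is regular (passing through finite purely inseparable subextensions if needed to preserve Noetherianity), so by Kunz, $F_{\bar{S}'}$ is flat. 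One then checks that $F_{\bar{S}'}$ factors as $\iota \circ F_{\bar{S}/k}$, where $\iota\colon \bar{S} \hookrightarrow \bar{S}'$ is the faithfully flat base change along $k \subseteq k^{1/p}$; faithfully flat descent then yields flatness of $F_{\bar{S}/k}$, and hence of $F_{S/R}$.

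For the backward direction ($F_{S/R}$ flat implies $\varphi$ regular), I must show (a) $\varphi$ is flat and (b) its fibers are geometrically regular. For (b), flatness of $F_{S/R}$ passes to each fiber $\bar{S}$ over $k = k(\fp)$, and base changing by a finite purely inseparable extension $k'/k$ preserves flatness of the relative Frobenius; one deduces flatness of $F_{\bar{S} \otimes_k k'}$, so $\bar{S} \otimes_k k'$ is regular by Kunz. Step (a) is the principal obstacle and the technical heart of the argument. Using that $\varphi$ is flat iff its Frobenius pushforward $F_{R*}\varphi\colon F_{R*}R \to F_{S*}S$ is flat as a map of $F_{R*}R$-modules, together with the factorization $F_{R*}\varphi = F_{S/R} \circ (\varphi \otimes_R F_{R*}R)$, the task reduces to controlling the base change of $\varphi$ along $F_R$. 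Since $F_R$ itself need not be flat, direct descent fails; the standard remedy is to iterate Frobenius, exploit that $\Spec(F^e_{R*}R) \to \Spec(R)$ is a universal homeomorphism for every $e$, and run a Tor-vanishing argument against finitely generated $R$-modules to pull flatness of $\varphi$ out of flatness of $F_{S/R}$.
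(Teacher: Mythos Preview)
The paper does not prove this statement; it is stated as a \texttt{citedthm} with attribution to \cite[Thm.\ 4]{Rad92} and \cite[Thm.\ 1]{And93}, and is used only as a black box (see Remark~\ref{rem:radu-andreapp}). There is therefore no proof in the paper against which to compare your proposal.

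That said, a few comments on the proposal itself. Your forward direction is essentially the right idea, but the appeal to ``the local criterion for flatness'' needs care: the base $S \otimes_R F_{R*}R$ of the map $F_{S/R}$ need not be Noetherian (cf.\ the remark following Definition~\ref{def:radu-andre}), so you must be explicit about which fibral flatness criterion you are invoking and why its hypotheses are met. The cleanest route is to use the criterion over the Noetherian base $R$ (or $S$), not over $S \otimes_R F_{R*}R$.

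For the backward direction, your step~(a) is, as you note, the crux, and the sketch you give (``iterate Frobenius \ldots\ run a Tor-vanishing argument'') is not yet a proof. The difficulty is real: flatness of $F_{S/R}$ does not immediately yield flatness of $\varphi$ by any formal descent, because $F_R$ is not flat in general. The original arguments of Radu and Andr\'e go through a careful induction using Andr\'e--Quillen cohomology (Andr\'e) or a direct Tor computation exploiting the structure of Frobenius powers of ideals (Radu), and either way the argument is substantially more involved than what you have indicated. If you want to supply a self-contained proof, you should consult those sources or Dumitrescu's later simplifications.
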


\noindent Note that if the relative Frobenius homomorphism $F_{S/R}$ is flat, then it is automatically faithfully flat since $F_{S/R}$ induces a homeomorphism on spectra \cite[Exp.\ XV, Prop.\ 2$(a)$]{SGA5}.\medskip
\par One can weaken the faithful flatness of $F_{S/R}$ to obtain the following:

\begin{citeddef}[{\cite[(2.3) and Lem.\ 2.5.1]{Has10}}]\label{def:fpuremap}
A homomorphism $\varphi\colon R \to S$ of rings of prime characteristic
$p > 0$ is \textsl{$F$-pure} if $F_{S/R}$ is a pure ring homomorphism.
\end{citeddef}

\noindent While $F$-pure homomorphisms have geometrically $F$-pure fibers \cite[Cor.\ 2.16]{Has10}, the converse does not hold in general \cite[Rem.\ 2.17]{Has10}.

We end this section with a version of the relative Frobenius homomorphism for modules.

\begin{lemma}[{cf.\ \cite[p.\ 557]{Ene00}}]\label{lem:sillylemma}
Let $\varphi\colon R \to S$ be a homomorphism of Noetherian rings of
prime characteristic $p > 0$. Let $M$ be an $R$-module, and let $N_1$ and $N_2$ be $S$-modules equipped
with a homomorphism
  \[
    \psi\colon N_1 \longrightarrow F^e_{S*}N_2
  \]
of $S$-modules for some $e > 0$. Then, the homomorphism $\psi$ induces a homomorphism
  \[
    \begin{tikzcd}[row sep=0,column sep=1.475em]
      \widetilde{\psi}\colon N_1 \otimes_R F^e_{R*}M \rar & F^e_{S*}(N_2
      \otimes_R M)\\
      n \otimes m \rar[mapsto] & \psi(n) \otimes m
    \end{tikzcd}
  \]
of $(S,F^e_{R*}R)$-bimodules that is functorial in $M$.
\end{lemma}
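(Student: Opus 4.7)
The plan is to produce $\widetilde{\psi}$ from an $R$-bilinear pairing on $N_1 \times F^e_{R*}M$, and then to verify the bimodule compatibility and functoriality by direct calculation on simple tensors. Essentially the whole content of the lemma lies in checking $R$-bilinearity; the rest is bookkeeping that keeps track of which $R$-actions are Frobenius-twisted and which are not.

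First I would define the set-theoretic pairing $(n,m) \mapsto \psi(n) \otimes m$ and check $R$-bilinearity. For $r \in R$, the $R$-action on $N_1$ is $r \cdot n = \varphi(r) n$, while the $R$-action on $F^e_{R*}M$ is the Frobenius-twisted one $r \cdot m = r^{p^e} m$. The hypothesis that $\psi$ is $S$-linear into $F^e_{S*}N_2$ unpacks to $\psi(\varphi(r) n) = \varphi(r)^{p^e} \psi(n)$, and pulling the scalar $\varphi(r)^{p^e}$ across the tensor $\otimes_R$ on the target then yields $\psi(n) \otimes r^{p^e} m$, which exactly balances the two $R$-actions on the source. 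This is the step where the Frobenius pushforward in the codomain of $\psi$ is essential: without it, the $p^e$-th power needed to match the twist on $F^e_{R*}M$ would not appear.

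Granted $R$-bilinearity, the pairing descends to a well-defined abelian group homomorphism $\widetilde{\psi}\colon N_1 \otimes_R F^e_{R*}M \to F^e_{S*}(N_2 \otimes_R M)$. I would then verify the $(S, F^e_{R*}R)$-bimodule structure as follows. The left $S$-action on the source comes from $N_1$, and on the target it is the twisted action $s \cdot (n \otimes m) = s^{p^e} n \otimes m$; left $S$-linearity of $\widetilde{\psi}$ is again precisely the identity $\psi(sn) = s^{p^e} \psi(n)$. The right $F^e_{R*}R$-actions on source and target are the \emph{untwisted} multiplications through $F^e_{R*}M$ and through the $M$-factor of $N_2 \otimes_R M$ respectively, so right $F^e_{R*}R$-linearity is a one-line check on simple tensors.

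Functoriality in $M$ is then immediate: given an $R$-linear map $f\colon M \to M'$, both ways around the naturality square send a simple tensor $n \otimes m$ to $\psi(n) \otimes f(m)$. The whole argument is a direct unraveling of definitions, and the only mild obstacle is keeping the twisted and untwisted $R$-actions straight in the $R$-bilinearity check.
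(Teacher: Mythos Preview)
Your proof is correct and follows essentially the same approach as the paper: define the bilinear pairing $(n,m)\mapsto\psi(n)\otimes m$, invoke the universal property of the tensor product to get $\widetilde{\psi}$, then verify the $(S,F^e_{R*}R)$-bimodule compatibility and functoriality on simple tensors. Your write-up is in fact slightly more explicit than the paper's in spelling out the $R$-bilinearity check, but the argument is the same.
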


\begin{proof}
We first note that $\psi$ induces an $R$-bilinear homomorphism
  \[
    \begin{tikzcd}[row sep=0,column sep=1.475em]
      N_1 \times F^e_{R*}M \rar & F^e_{S*}(N_2 \otimes_R M)\\
      (n,m) \rar[mapsto] & \psi(n) \otimes m
    \end{tikzcd}
  \]
where the left $R$-module structure on $F^e_{R*}M$ is given by $a \cdot m = a^{p^e}m$. Hence the homomorphism $\widetilde{\psi}$ exists as an $R$-module homomorphism by the universal property of tensor products.
This homomorphism $\widetilde{\psi}$ is in fact $(S,F^e_{R*}R)$-bilinear since
  \begin{gather*}
    \widetilde{\psi}(n \otimes m) \cdot a = (\psi(n) \otimes m)\cdot a = \psi(n)
    \otimes ma = \widetilde{\psi}(n \otimes ma) = \widetilde{\psi}\bigl((n
    \otimes m) \cdot a\bigr)\\
    b \cdot \widetilde{\psi}(n \otimes m) = b \cdot(\psi(n) \otimes m) =
    b^{p^e}\psi(n) \otimes m = \psi(bn) \otimes m = \widetilde{\psi}(bn \otimes
    m) = \widetilde{\psi}\bigl(b\cdot(n \otimes m)\bigr)
  \end{gather*}
for all $a \in R$ and for all $b \in S$. Functoriality in $M$ follows from the construction of $\widetilde{\psi}$ using the universal property of tensor products.
\end{proof}

\begin{remark}
The construction in Lemma \ref{lem:sillylemma} yields the relative Frobenius $F_{S/R}$ when we apply it to $M = R$, $N_1 = N_2 = S$ and $\psi = F_S\colon S \rightarrow F_{S*}S$.
\end{remark}

\section{Localization and descent under faithfully flat
homomorphisms}\label{sect:locanddescent}

In this section, we prove assertions (\ref{property:localization}) and
(\ref{property:descent}) for $F$-injectivity, namely, that $F$-injectivity localizes (Proposition \ref{prop:finjlocalizes}) and descends along faithfully flat homomorphisms (Theorem \ref{thm:descentF-injective}). We then discuss the behavior of geometric $F$-injectivity under infinite purely inseparable extensions in \S\ref{subsect:geomfinjinfiniteext}.

\subsection{Characterizations of \emph{F}-injectivity using module-finite algebras}
An important ingredient in proving localization and descent of $F$-injectivity is the following alternative characterization of $F$-injectivity in terms of module-finite algebras over $R$, which was pointed out to us by Karl Schwede.

\begin{lemma}\label{lem:finjmodfin}
Let $(R,\fm)$ be a Noetherian local ring of prime characteristic $p > 0$. Fix a filtered direct system $\{\psi_\alpha\colon R \to S_\alpha\}_\alpha$ of module-finite homomorphisms such that $F_R\colon R \to F_{R*}R$ is the colimit of this directed system. Then, the ring $R$ is $F$-injective if and only if the $R$-module homomorphisms
  \begin{equation}\label{eq:finjfinalg}
    H^i_\fm(\psi_\alpha)\colon H^i_\fm(R) \longrightarrow H^i_\fm(S_\alpha)
  \end{equation}
are injective for all $i$ and for all $\alpha$.
\end{lemma}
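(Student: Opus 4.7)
The plan is to combine two standard facts: $(i)$ the universal property of the filtered colimit factors $F_R$ through each $\psi_\alpha$, and $(ii)$ the local cohomology functor $H^i_\fm(-)$ commutes with filtered colimits of $R$-modules because $R$ is Noetherian. For each index $\alpha$, the coprojection from the colimit provides an $R$-algebra map $\varphi_\alpha \colon S_\alpha \to F_{R*}R$ with $F_R = \varphi_\alpha \circ \psi_\alpha$, and applying $H^i_\fm(-)$ to this equality gives the factorization
\[
H^i_\fm(F_R) = H^i_\fm(\varphi_\alpha) \circ H^i_\fm(\psi_\alpha).
\]

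The forward implication is then immediate: if $R$ is $F$-injective, then $H^i_\fm(F_R)$ is injective for every $i$, so each $H^i_\fm(\psi_\alpha)$ sits as the left factor in an injective composition and is itself injective, giving \eqref{eq:finjfinalg} for every $\alpha$ and every $i$.

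For the converse, I would invoke the identification $H^i_\fm(M) \cong \varinjlim_n \Ext^i_R(R/\fm^n, M)$. Since $R$ is Noetherian, each $R/\fm^n$ is finitely presented, so $\Ext^i_R(R/\fm^n,-)$ commutes with filtered colimits of $R$-modules; interchanging the two colimits then yields
\[
H^i_\fm(F_R) \;\cong\; \varinjlim_\alpha H^i_\fm(\psi_\alpha)
\]
as homomorphisms out of $H^i_\fm(R)$. Because filtered colimits are exact in the category of $R$-modules, a filtered colimit of injective module homomorphisms is injective, so the assumed injectivity of each $H^i_\fm(\psi_\alpha)$ forces $H^i_\fm(F_R)$ to be injective for every $i$, i.e., $R$ is $F$-injective.

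The proof is essentially formal once these two ingredients are in place, so there is no genuine obstacle. The only mild subtlety is to interpret the colimit in the arrow category so that the displayed isomorphism really identifies $H^i_\fm(F_R)$, the map induced by $F_R$, with the colimit of the individual $H^i_\fm(\psi_\alpha)$, rather than merely matching targets; this is guaranteed by the coherence of the coprojections $\varphi_\alpha$ with the cocone structure expressing $F_R$ as the colimit.
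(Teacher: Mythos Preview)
Your proof is correct and follows essentially the same approach as the paper's: both directions use the factorization $H^i_\fm(F_R) = H^i_\fm(\varphi_\alpha) \circ H^i_\fm(\psi_\alpha)$, with the forward direction immediate from injectivity of a composition, and the converse obtained by passing to the filtered colimit and using that local cohomology commutes with filtered colimits together with exactness of filtered colimits. The only cosmetic difference is that you justify the commutation via the $\varinjlim_n \Ext^i_R(R/\fm^n,-)$ description, whereas the paper simply asserts that local cohomology commutes with filtered colimits.
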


\begin{proof}
The homomorphism $H^i_\fm(F_R)\colon H^i_\fm(R) \to H^i_\fm(F_{R*}R)$ factors as
\begin{equation}\label{eq:finjfactor}
    H^i_\fm(R) \xrightarrow{H^i_\fm(\psi_\alpha)} H^i_\fm(S_\alpha)
    \longrightarrow H^i_\fm(F_*R)
  \end{equation}
for every $\alpha$. Thus, for the direction $\Rightarrow$, it suffices to note that if the composition \eqref{eq:finjfactor} is injective, then the first homomorphism $H^i_\fm(\psi_\alpha)$ is also injective.
For the direction $\Leftarrow$, it suffices to take colimits in
\eqref{eq:finjfactor}, since local cohomology commutes with filtered colimits.
\end{proof}

We will also use the following characterization of $F$-injectivity in terms of homomorphisms on $\Ext$ modules. Under $F$-finiteness hypotheses, this characterization is due to Fedder \cite[Rem.\ on p.\ 473]{Fed83} for Cohen--Macaulay rings, and is implicit in
the proof of \cite[Prop.~4.3]{Sch09}.

\begin{lemma}[cf.\ {\cite[Lem.\ A.1]{Mur}}]\label{lem:finjdc}
Let $R$ be a Noetherian ring of prime characteristic $p > 0$ with a dualizing complex $\omega_R^\bullet$. Fix a directed system $\{\psi_\alpha\colon R \to S_\alpha\}_\alpha$ of module-finite homomorphisms such that $F_R\colon R \to F_{R*}R$ is the colimit of this directed system. Then, the ring $R$ is $F$-injective if and only if the $R$-module homomorphisms
  \begin{equation}\label{eq:grottracefinj}
    \psi_\alpha^*\colon
    \Ext^{-i}_R(\psi_{\alpha*}S_\alpha,\omega_R^\bullet)
    \longrightarrow \Ext^{-i}_R(R,\omega_R^\bullet)
  \end{equation}
are surjective for all $i$ and for all $\alpha$.
\end{lemma}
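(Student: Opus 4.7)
The plan is to reduce $F$-injectivity to injectivity of local cohomology maps by invoking Lemma \ref{lem:finjmodfin}, and then to translate those injectivity statements into the surjectivity statements \eqref{eq:grottracefinj} using Grothendieck local duality together with Matlis duality at each maximal ideal.

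First I would apply Lemma \ref{lem:finjmodfin} at each maximal ideal $\fm \subseteq R$. Since localization commutes with the module-finite homomorphisms $\psi_\alpha$ and with the filtered colimit computing $F_R$, the ring $R$ is $F$-injective if and only if, for every maximal $\fm \subseteq R$, every integer $i$, and every index $\alpha$, the map
\[
H^i_\fm(\psi_\alpha)\colon H^i_\fm(R) \longrightarrow H^i_\fm(S_\alpha)
\]
is injective.

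Next, fix such an $\fm$. Because $R$ admits the dualizing complex $\omega_R^\bullet$, its localization $(\omega_R^\bullet)_\fm$ is a dualizing complex for $R_\fm$, and Grothendieck local duality in derived form, $R\Gamma_\fm(-)^\vee \simeq R\Hom_R(-,\omega_R^\bullet)_\fm$ with $(-)^\vee \coloneqq \Hom_{R_\fm}(-,E_R(R/\fm))$, yields natural isomorphisms
\[
H^i_\fm(M)^\vee \;\cong\; \Ext^{-i}_R(M,\omega_R^\bullet)_\fm
\]
for every finitely generated $R$-module $M$. Both $H^i_\fm(R)$ and $H^i_\fm(S_\alpha)$ are Artinian $R_\fm$-modules (the latter because $S_\alpha$ is module-finite over $R$), so Matlis duality, being an exact antiequivalence on Artinian $R_\fm$-modules, converts the injectivity of $H^i_\fm(\psi_\alpha)$ into the surjectivity of its Matlis dual, which by naturality of local duality is precisely the localization $(\psi_\alpha^*)_\fm$ of the map appearing in \eqref{eq:grottracefinj}. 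Since surjectivity of $\psi_\alpha^*$ itself can be checked after localization at every maximal ideal of $R$, quantifying these equivalences over all $\fm$, $i$, and $\alpha$ delivers the asserted characterization.

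The step I expect to be the most delicate is the identification of the Matlis dual of $H^i_\fm(\psi_\alpha)$ with the localized map $(\psi_\alpha^*)_\fm$. This requires local duality in a form that is natural in the module argument with respect to the ring homomorphism $\psi_\alpha$, as guaranteed by the derived-category formulation, and it requires using the single dualizing complex $\omega_R^\bullet$ uniformly rather than ad hoc rescalings at each $\fm$, so that the indexing $\Ext^{-i}$ on the right-hand side matches the $H^i_\fm$ on the left without an unexpected shift.
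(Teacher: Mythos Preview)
Your proposal is correct and follows essentially the same route as the paper's proof: reduce to Lemma~\ref{lem:finjmodfin}, apply Grothendieck local duality at each maximal ideal to convert injectivity on local cohomology into surjectivity on $\Ext$, and then use that $\Ext$ of a module-finite algebra into $\omega_R^\bullet$ localizes and that surjectivity is a local property. The paper makes the same identifications, citing \cite[Cor.\ V.6.3]{Har66} for local duality, \cite[Prop.\ X.6.10$(b)$]{BouA} for compatibility of $\Ext$ with localization in the module-finite case, and \cite[Cor.\ V.2.3]{Har66} for localization of dualizing complexes---exactly the points you flag as delicate in your final paragraph.
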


\begin{proof}
  This follows from Lemma \ref{lem:finjmodfin} and 
  Grothendieck local duality \cite[Cor.\ V.6.3]{Har66}, since
  Ext modules \cite[Prop.\ X.6.10$(b)$]{BouA} (here is where we use the
  module-finiteness of the $S_\alpha$) and dualizing
  complexes \cite[Cor.\ V.2.3]{Har66} are compatible with localization.
\end{proof}

\subsection{Localization of \emph{F}-injectivity}
In \cite[Def.\ 4.4]{Sch09}, Schwede defines a Noetherian ring $R$ to be
$F$-injective if $R_\fp$ is $F$-injective for every prime ideal $\fp \subseteq R$. We show that Schwede's definition is equivalent to our definition of $F$-injectivity (Definition \ref{def:FI}). This result was shown by Schwede under the additional assumption that $R$ is $F$-finite \cite[Prop.\ 4.3]{Sch09}, and by Hashimoto for the CMFI property \citeleft\citen{Has01}\citemid p.\ 238\citepunct \citen{Has10}\citemid Cor.\ 4.11\citeright.

\begin{proposition}\label{prop:finjlocalizes}
Let $R$ be a Noetherian ring of prime characteristic $p > 0$. The ring $R$ is $F$-injective if and only if $R_\fp$ is $F$-injective for every prime ideal $\fp \subseteq R$. In particular, if $R$ is $F$-injective, then for every multiplicative set $S \subseteq R$, the localization $S^{-1}R$ is $F$-injective.
\end{proposition}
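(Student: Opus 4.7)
The direction $(\Leftarrow)$ is immediate since maximal ideals are prime. For $(\Rightarrow)$, given a prime $\fp \subseteq R$, choose a maximal ideal $\fm \supseteq \fp$; since $R_\fm$ is $F$-injective and $R_\fp = (R_\fm)_{\fp R_\fm}$, we may replace $R$ by $R_\fm$ and assume $(R,\fm)$ is Noetherian local and $F$-injective. I will apply Lemma \ref{lem:finjmodfin} to both $R$ and $R_\fp$. Fix a directed system $\{\psi_\alpha\colon R \to S_\alpha\}$ of module-finite maps with colimit $F_R$ (for instance, the finitely generated $R$-subalgebras of $F_{R*}R$, each of which is module-finite since every element of $F_{R*}R$ is integral over $R$ via Frobenius). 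Localization at $\fp$ yields a directed system $\{(\psi_\alpha)_\fp\colon R_\fp \to (S_\alpha)_\fp\}$ with colimit $F_{R_\fp}$, so by Lemma \ref{lem:finjmodfin} the hypothesis translates to the injectivity of $H^i_\fm(\psi_\alpha)$ for all $i,\alpha$, and the goal becomes the injectivity of $H^i_{\fp R_\fp}((\psi_\alpha)_\fp)$ for all $i,\alpha$.

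To bridge these, I convert local cohomology injectivity to Ext surjectivity via Grothendieck local duality. Since $R$ itself need not carry a dualizing complex, I pass to the $\fm$-adic completion $\hat R$, which is complete Noetherian local and hence admits a dualizing complex $\omega_{\hat R}^\bullet$ by the Cohen structure theorem. As each $S_\alpha$ is finitely generated over $R$, local cohomology commutes with completion, so injectivity of $H^i_\fm(\psi_\alpha)$ is equivalent to injectivity of $H^i_{\hat \fm}(\hat \psi_\alpha)$ for the module-finite maps $\hat\psi_\alpha\colon \hat R \to \hat S_\alpha = S_\alpha \otimes_R \hat R$. Local duality over $\hat R$, applied just as in the proof of Lemma \ref{lem:finjdc}, then converts this into surjectivity of $\Ext^{-i}_{\hat R}(\hat S_\alpha,\omega_{\hat R}^\bullet) \to \Ext^{-i}_{\hat R}(\hat R,\omega_{\hat R}^\bullet)$ for all $i,\alpha$.

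To descend back to $R_\fp$, I pick a prime $\fq \subseteq \hat R$ minimal over $\fp \hat R$. Going-down for the flat map $R \to \hat R$ forces $\fq \cap R = \fp$, and $\fp \hat R \subseteq \hat\fm$ gives $\fq \subseteq \hat\fm$, so $R_\fp \to \hat R_\fq$ is a faithfully flat local homomorphism. Surjectivity of maps of finitely generated modules localizes and the localized dualizing complex is $\omega_{\hat R_\fq}^\bullet$, so the Ext maps remain surjective after localizing at $\fq$; local duality over $\hat R_\fq$ then gives injectivity of $H^i_{\fq \hat R_\fq}((\hat\psi_\alpha)_\fq)$. Minimality of $\fq$ over $\fp\hat R$ yields $\sqrt{\fp \hat R_\fq} = \fq \hat R_\fq$, so this equals $H^i_{\fp \hat R_\fq}((\hat\psi_\alpha)_\fq)$, which by flat base change for local cohomology is $H^i_{\fp R_\fp}((\psi_\alpha)_\fp) \otimes_{R_\fp} \hat R_\fq$; faithful flatness descends the injectivity to $H^i_{\fp R_\fp}((\psi_\alpha)_\fp)$, and Lemma \ref{lem:finjmodfin} applied to $R_\fp$ concludes. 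The main obstacle will be the careful orchestration of completion, choice of $\fq$, and flat base change, so that the localized Ext surjectivity over $\hat R_\fq$ indeed matches, under local duality and base change, injectivity of the local cohomology map witnessing $F$-injectivity of $R_\fp$.
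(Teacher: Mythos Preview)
Your proof is correct and follows essentially the same approach as the paper: both arguments pass to the completion to obtain a dualizing complex, use module-finite approximations of Frobenius together with Grothendieck local duality to convert local-cohomology injectivity into $\Ext$ surjectivity, localize this surjectivity, and then descend back to $R_\fp$ via a prime $\fq$ of $\widehat{R}$ minimal over $\fp\widehat{R}$. The only organizational difference is that the paper first isolates the complete-local case (applying Lemma~\ref{lem:finjdc} directly to the localization $R_\fp$ of a complete $R$) and then invokes the known descent result \cite[Lem.~A.3]{Mur} along $R_\fp \to (\widehat{R_\fm})_\fq$, whereas you carry the individual maps $\psi_\alpha$ through the entire chain and perform the faithfully-flat descent by hand; mathematically these are the same steps.
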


\begin{proof}
We first prove the if and only if statement.
The direction $\Leftarrow$ is true by definition, and hence it suffices to show the direction $\Rightarrow$. We claim that it suffices to consider the case when $R$ is a complete local ring. Suppose $R$ is $F$-injective, and let $\fp \subseteq R$ be a prime ideal.
Let $\fm \subseteq R$ be a maximal ideal containing $\fp$, and consider a prime
ideal $\fq \subseteq \widehat{R_\fm}$ minimal over
$\fp\widehat{R_\fm}$.
Then, the local homomorphism
  \[
    R_\fp \longrightarrow (\widehat{R_\fm})_\fq
  \]
is faithfully flat with zero-dimensional closed fiber.
Since $\widehat{R_\fm}$ is $F$-injective by \cite[Rem.\ on p.\
473]{Fed83}, the localization $(\widehat{R_\fm})_\fq$ is also by the Proposition
in the complete local case,
and hence $R_\fp$ is $F$-injective by \cite[Lem.\ A.3]{Mur}.

\par It remains to show the direction $\Rightarrow$ when $R$ is complete local,
in which case $R$ has a dualizing complex \cite[(4) on p.\ 299]{Har66}.
Write $F_R\colon R \to F_{R*}R$ as a filtered colimit of module-finite homomorphisms $\psi_\alpha\colon R \to S_\alpha$.
The statement now follows from Lemma \ref{lem:finjdc}, since the surjectivity of
the homomorphisms \eqref{eq:grottracefinj} localizes by \cite[Prop.\
X.6.10$(b)$]{BouA}.
  
The last localization statement now follows because the local rings of $S^{-1}R$ coincide with the local rings of $R$ at the primes of $R$ that do not intersect $S$.  
\end{proof}

\begin{remark}\label{rem:gammaF-injec}
As pointed out by Linquan Ma, one can also prove the direction $\Rightarrow$ in Proposition \ref{prop:finjlocalizes} in the complete local case using the gamma construction of Hochster--Huneke \cite[(6.11)]{HH94}. By the gamma construction and \cite[Thm.\ $3.4(ii)$]{Mur} (see also Remark \ref{rem:gamma3forfinj}), there exists a faithfully flat extension $R \to R^\Gamma$, such that $R^\Gamma$ is $F$-finite, and such that the induced morphism $\Spec(R^\Gamma) \to \Spec(R)$ is a homeomorphism that identifies $F$-injective loci. Since the $F$-injective locus on $\Spec(R^\Gamma)$ is stable under generization by \cite[Prop.\ 4.3]{Sch09}, the $F$-injective locus on $\Spec(R)$ is also stable under generization.
\end{remark}

As a consequence of Proposition \ref{prop:finjlocalizes}, the following properties of $F$-injective Noetherian rings follow without any $F$-finiteness hypotheses.
See, e.g., \citeleft\citen{Swa80}\citepunct \citen{Yan85}\citeright\ for the
notions of seminormality and weak normality in
$(\ref{cor:finjwnitem})$.

\begin{corollary}\label{cor:finjwn}
Let $R$ be a Noetherian, $F$-injective ring of prime characteristic $p > 0$. We then have the following:
  \begin{enumerate}[label=$(\roman*)$,ref=\roman*]
    \item $R$ is reduced.\label{cor:finjred}
    \item $R$ is approximately Gorenstein.\label{cor:finjapproxgor}
    \item $R$ is weakly normal, and in particular, seminormal.
      \label{cor:finjwnitem}
    \item If $R$ is in addition local and $I$ is an ideal generated by a regular sequence, then the map
      \[
        \id_{R/I} \otimes_R F^e_R \colon R/I \longrightarrow R/I \otimes_R F^e_{R*}(R) 
      \]
is injective for all $e > 0$. In particular, $I$ is Frobenius closed.\label{cor:finjfrobclosed}
  \end{enumerate}

\end{corollary}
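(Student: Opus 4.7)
I would prove the four parts in the order $(i)$, $(iv)$, $(iii)$, $(ii)$.

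For $(i)$, I verify the Serre conditions $(R_0)$ and $(S_1)$, using Proposition~\ref{prop:finjlocalizes} to ensure each $R_\fp$ is $F$-injective. For $(R_0)$: when $\fp\in\Min(R)$, $R_\fp$ is Artinian local with $\fp R_\fp$ nilpotent, so $H^0_{\fp R_\fp}(R_\fp)=R_\fp$, and the $i=0$ case of $F$-injectivity forces the Frobenius on $R_\fp$ to be injective, so $R_\fp$ is reduced---hence a field. For $(S_1)$: suppose $\fp\in\operatorname{Ass}(R)$ is not minimal. Then $R_\fp$ has depth zero with $\dim R_\fp\ge 1$, so there is some $s\ne 0$ in $R_\fp$ with $\fp R_\fp\cdot s=0$; $s$ cannot be a unit (else $\fp R_\fp=0$, contradicting $\dim R_\fp\ge 1$), so $s\in\fp R_\fp$ and hence $s^2\in s\cdot\fp R_\fp=0$, giving $s^p=0$ for any prime $p\ge 2$. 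But $s\in H^0_{\fp R_\fp}(R_\fp)$, and Frobenius on this module is injective, a contradiction. Thus $R$ has no embedded associated primes and is reduced.

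Part $(iv)$ is the main work. I reduce to the complete local case, since completion preserves both $F$-injectivity (local cohomology is unchanged) and the regularity of $f_1,\ldots,f_c$. Because the sequence is regular, the transition maps of the direct system $H^c_I(R)=\varinjlim_t R/(f_1^t,\ldots,f_c^t)$ are injective, giving an inclusion $R/I\hookrightarrow H^c_I(R)$; moreover, the map $\id_{R/I}\otimes_R F^e_R\colon R/I\to R/I\otimes_R F^e_{R*}R\cong R/I^{[p^e]}$ agrees with the restriction of $H^c_I(F^e_R)$ to this submodule, so it suffices to prove that restriction is injective. I proceed by induction on $c$. For the base case $c=1$ (writing $f\coloneqq f_1$), the Grothendieck composition spectral sequence $E_2^{p,q}=H^p_\fm(H^q_{(f)}(R))\Rightarrow H^{p+q}_\fm(R)$ degenerates (as $H^q_{(f)}(R)=0$ for $q\ne 1$, since $f$ is a nonzerodivisor), yielding natural isomorphisms $H^p_\fm(H^1_{(f)}(R))\cong H^{p+1}_\fm(R)$ compatible with the Frobenius action. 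Combining injectivity of $H^{p+1}_\fm(F^e_R)$ for all $p\ge 0$ with a support-theoretic analysis of $H^1_{(f)}(R)=R_f/R$ (using in particular the reducedness of $R$, so that $R\hookrightarrow F^e_{R*}R$), one obtains injectivity of the Frobenius on $H^1_{(f)}(R)$. For the inductive step, I invoke a companion lemma---provable via the long exact sequence on local cohomology associated to $0\to R\xrightarrow{f_1}R\to R/(f_1)\to 0$ and its Frobenius-twisted analogue---asserting that $R/(f_1)$ is $F$-injective whenever $R$ is $F$-injective and $(f_1)$ is Frobenius closed. The inductive hypothesis applied in $R/(f_1)$ to the regular sequence $\bar f_2,\ldots,\bar f_c$ shows $(\bar f_2,\ldots,\bar f_c)$ is Frobenius closed there; using the containment $I^{[p^e]}\subseteq(f_1,f_2^{p^e},\ldots,f_c^{p^e})$, any $r$ with $r^{p^e}\in I^{[p^e]}$ satisfies $\bar r^{p^e}\in(\bar f_2,\ldots,\bar f_c)^{[p^e]}$ in $R/(f_1)$, so $\bar r\in(\bar f_2,\ldots,\bar f_c)$ and $r\in I$.

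For $(iii)$, weak normality is a local property, so I assume $R$ is local; by $(i)$, $R$ is reduced, and in particular has a total ring of fractions $Q(R)$ as in Definition~\ref{def:variantsofnormality}. For $x=a/b\in Q(R)$ with $b$ a nonzerodivisor and $x^p\in R$, we have $a^p=gb^p$ for some $g\in R$, so $a^p\in(b)^{[p]}$; applying $(iv)$ to the length-one regular sequence $\{b\}$ gives $a\in(b)$, i.e., $x\in R$. For $(ii)$, I reduce to the local case and pass to the $\fm$-adic completion, which is still $F$-injective (by preservation of local cohomology under completion) and hence reduced by $(i)$; Hochster's criterion for approximately Gorenstein local rings then applies, since a Noetherian local ring whose completion is reduced has no embedded associated primes in the completion. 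The main obstacle is part $(iv)$---specifically, the transfer of Frobenius injectivity from $H^*_\fm(R)$ to all of $H^1_{(f)}(R)$ in the base case $c=1$, and the companion lemma asserting that $F$-injectivity descends to $R/(f_1)$ once $(f_1)$ is Frobenius closed.
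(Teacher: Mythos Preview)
Your arguments for $(\ref{cor:finjred})$ and $(\ref{cor:finjapproxgor})$ are correct and essentially match the paper's. Your approach to $(\ref{cor:finjwnitem})$ via $p$-closure---reducing weak normality to the Frobenius closure of principal ideals generated by nonzerodivisors---is cleaner than the paper's route (which inducts on height using a lemma of Schwede that $F$-injectivity together with weak normality on the punctured spectrum implies weak normality), but it hinges entirely on $(\ref{cor:finjfrobclosed})$.

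For $(\ref{cor:finjfrobclosed})$, the two obstacles you flag are genuine, and as stated the argument does not go through. In the base case $c=1$, injectivity of $H^p_\fm(\phi)$ for all $p$ does not force $\phi\colon H^1_{(f)}(R)\to H^1_{(f)}(F^e_{R*}R)$ to be injective: a nonzero kernel supported on $V(f)\smallsetminus\{\fm\}$ is invisible to $H^*_\fm$. Unwinding via the snake lemma applied to $0\to R\to R_f\to H^1_{(f)}(R)\to 0$, one finds $\ker\phi$ is exactly the $f$-power torsion in $F^e_{R*}R/R$, and its vanishing is \emph{equivalent} to Frobenius closure of the ideals $(f^n)$---so the spectral-sequence route is circular without further input. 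The companion lemma is more serious: the long-exact-sequence chase reduces to showing that $F(\eta)\in f\cdot H^i_\fm(F_{R*}R)$ forces $\eta\in f\cdot H^i_\fm(R)$, which amounts to injectivity of the induced Frobenius on $H^i_\fm(R)/fH^i_\fm(R)$. That is strictly stronger than $F$-injectivity and does not follow from the hypotheses; indeed, the implication ``$R$ $F$-injective $\Rightarrow R/(f)$ $F$-injective'' is not expected to hold outside the Cohen--Macaulay range.

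The paper sidesteps all of this by observing that the proof of \cite[Prop.\ 3.11]{QS17} invokes $F$-finiteness only to localize $F$-injectivity, so Proposition~\ref{prop:finjlocalizes} lets their argument run verbatim. Their induction is organized differently and never requires $R/(f_1)$ to be $F$-injective; you should consult it directly.
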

\begin{proof}
  For $(\ref{cor:finjred})$ and $(\ref{cor:finjfrobclosed})$, the proofs in
  \cite[Rem.\ 2.6]{SZ13} and \cite[Prop.\ 3.11]{QS17} in the $F$-finite case
  apply, since they only use $F$-finiteness to conclude that $F$-injectivity
  localizes (Proposition \ref{prop:finjlocalizes}).
$(\ref{cor:finjred})$ is also shown in \cite[Lem.\ 3.11]{QS17} using the gamma
construction.
  
\par For $(\ref{cor:finjapproxgor})$, we may assume that $R$ is local since the property of being approximately Gorenstein is local
\cite[Def.\ 1.3]{Hoc77}.
Since $\widehat{R}$ is $F$-injective \cite[Rem.\ on p.\ 473]{Fed83}, it is
reduced by $(\ref{cor:finjred})$, and hence approximately Gorenstein by
\cite[Thm.\ 1.7]{Hoc77}.
Thus, $R$ is approximately Gorenstein by \cite[Cor.\ 2.2]{Hoc77}.
  
\par $(\ref{cor:finjwnitem})$ is shown in the $F$-finite case by Schwede \cite[Thm.\ 4.7]{Sch09}.
The same proof applies once we know that $F$-injectivity localizes (Proposition
\ref{prop:finjlocalizes}) and that $(\ref{cor:finjred})$ holds.
\end{proof}
  
\begin{remark}
As a consequence of Corollary \ref{cor:finjwn}$(\ref{cor:finjfrobclosed})$, the proof of \cite[Cor.\ 3.14]{QS17} yields the following: if $(R,\fm)$ is a Noetherian local ring, and $I = (x_1,x_2,\dots,x_t)$ is an ideal generated by a regular sequence such that $R/x_1R$ is $F$-injective, then the Frobenius actions on $H^t_I(R)$ and $H^t_\fm(R)$ are injective. In particular, this shows that $F$-injectivity deforms when $R$ is Cohen--Macaulay (the latter was first shown in \cite[Thm.\ 3.4(1)]{Fed83}).
\end{remark}

Globally, localization of $F$-injectivity implies that $F$-injectivity can be checked on an affine open cover.

\begin{corollary}\label{cor:globalfinj}
Let $X$ be a locally Noetherian scheme of prime characteristic $p > 0$. Then the following are equivalent:
\begin{enumerate}[label=$(\roman*)$,ref=\roman*]
	\item For every affine open subscheme $\Spec(R)$ of $X$, the ring $R$ is $F$-injective.\label{cor:everyaffinefinj}
  \item $X$ is $F$-injective.\label{cor:xisfinj}
	\item For every locally closed point $x \in X$, the stalk $\mathcal{O}_{X,x}$ is $F$-injective.\label{cor:stalksarefinj}
\end{enumerate}
Furthermore, if $X$ is locally of finite type over a field $k$ of characteristic $p > 0$, then \emph{$(\ref{cor:everyaffinefinj})$--$(\ref{cor:stalksarefinj})$} are equivalent to $\mathcal{O}_{X,x}$ being $F$-injective for every closed point $x \in X$.
\end{corollary}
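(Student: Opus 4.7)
The plan is to prove the cyclic implications (i) $\Rightarrow$ (ii) $\Rightarrow$ (iii) $\Rightarrow$ (i), with Proposition \ref{prop:finjlocalizes} (localization of $F$-injectivity) doing all of the substantive work. The implication (i) $\Rightarrow$ (ii) is immediate from Definition \ref{def:FI}, since any affine open cover of $X$ witnesses (ii) once (i) is known.

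For (ii) $\Rightarrow$ (iii), I would take a locally closed point $x \in X$, pick some $\Spec(R_\alpha)$ from the given affine open cover with $x \in \Spec(R_\alpha)$, and observe that the stalk $\mathcal{O}_{X,x}$ is a localization of the $F$-injective ring $R_\alpha$, hence $F$-injective by Proposition \ref{prop:finjlocalizes}. (This argument in fact works for any point of $X$, not only locally closed ones, but only the weaker form is needed in (iii).)

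For (iii) $\Rightarrow$ (i), the key observation is that every maximal ideal $\fm$ of the coordinate ring of any affine open subscheme $\Spec(R) \subseteq X$ corresponds to a point $x$ that is closed in $\Spec(R)$, and therefore locally closed in $X$. Applying (iii) yields that $R_\fm = \mathcal{O}_{X,x}$ is $F$-injective; since this holds for all maximal ideals of $R$, we conclude that $R$ is $F$-injective by Definition \ref{def:FI}.

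For the final assertion, I would invoke the Nullstellensatz to see that in a scheme locally of finite type over a field $k$, every locally closed point is already closed. Indeed, if $x$ is closed in some affine open of finite type over $k$, then $\kappa(x)$ is a finite extension of $k$; for any other affine chart $\Spec(S) \ni x$ (which we may take to be of finite type over $k$ after refining the cover), the prime $\fq \subseteq S$ corresponding to $x$ has residue field $\kappa(x)$ finite over $k$, so $S/\fq$ is a finitely generated $k$-algebra that is a domain integral over $k$, hence a field, forcing $\fq$ to be maximal in $S$. Thus ``locally closed'' and ``closed'' coincide in this setting, and the last equivalence follows. There is no real obstacle here: all of the essential content has been pre-packaged into Proposition \ref{prop:finjlocalizes}, and what remains is bookkeeping between affine charts and the identification of stalks as localizations.
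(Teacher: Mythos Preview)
Your proof is correct and follows essentially the same route as the paper's: the cyclic chain $(\ref{cor:everyaffinefinj})\Rightarrow(\ref{cor:xisfinj})\Rightarrow(\ref{cor:stalksarefinj})\Rightarrow(\ref{cor:everyaffinefinj})$ via Proposition~\ref{prop:finjlocalizes}, the observation that closed points of an affine open are locally closed in $X$, and the identification of closed and locally closed points in the finite-type-over-a-field case. The only difference is that you spell out the Nullstellensatz argument for the last step, which the paper leaves implicit.
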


\begin{proof}
Clearly $(\ref{cor:everyaffinefinj}) \Rightarrow (\ref{cor:xisfinj})$. If $X$ has an affine open cover $\{R_\alpha\}_\alpha$ such that every $R_\alpha$ is $F$-injective, then Proposition \ref{prop:finjlocalizes} shows that all the stalks of $\mathcal{O}_X$ are $F$-injective local rings. Thus, $(\ref{cor:xisfinj}) \Rightarrow (\ref{cor:stalksarefinj})$. Finally, $(\ref{cor:stalksarefinj}) \Rightarrow (\ref{cor:everyaffinefinj})$ follows because if $\Spec(R)$ is an affine open subscheme of $X$, then any closed point of $\Spec(R)$ is a locally closed point of $X$.

If $X$ is in addition locally of finite type over a field $k$, then the set of closed points of $X$ coincide with the set of locally closed points of $X$, thus proving the equivalence of the last assertion in the statement of the Corollary with $(\ref{cor:everyaffinefinj})$--$(\ref{cor:stalksarefinj})$.
\end{proof}

\subsection{Descent of \emph{F}-injectivity}
We now show that $F$-injectivity descends along faithfully flat
homomorphisms, thereby establishing (\ref{property:descent}) for the property $\cP$ of being $F$-injective. The corresponding result for the CMFI property was shown by Hashimoto \citeleft\citen{Has01}\citemid Lem.\ 5.2\citepunct\citen{Has10}\citemid Lem.\ 4.6\citeright. 

\begin{theorem}\label{thm:descentF-injective}
Let $\varphi\colon R \to S$ be a homomorphism of Noetherian rings of prime characteristic $p > 0$ that is surjective on spectra satisfying the following property: for all prime ideals $\fq \in \Spec(S)$, the induced map $R_{\varphi^{-1}(\fq)} \rightarrow S_\fq$ is pure (for example if $\varphi$ is faithfully flat). Then if $S$ is $F$-injective, so is $R$.
\end{theorem}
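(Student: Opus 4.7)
My plan is to reduce to a complete local setting and apply Hochster's splitting theorem for pure extensions of approximately Gorenstein rings.

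\emph{Reduction to a diagram chase.} By Proposition \ref{prop:finjlocalizes} it suffices to show $R_\fp$ is $F$-injective for every $\fp \in \Spec R$. Fix $\fp$. The surjectivity of $\Spec\varphi$ produces $\fq \in \Spec S$ with $\varphi^{-1}(\fq) = \fp$, and by replacing $\fq$ with a prime of $S$ minimal over $\fp S$ contained in $\fq$, I can arrange $\sqrt{\fp S_\fq} = \fq S_\fq$. The hypothesis gives a pure local homomorphism $R_\fp \to S_\fq$, and $S_\fq$ is $F$-injective by Proposition \ref{prop:finjlocalizes}. Thus it suffices to prove: \emph{if $\varphi\colon (R,\fm) \to (S,\fn)$ is a pure local homomorphism of Noetherian local rings with $\sqrt{\fm S} = \fn$, and $S$ is $F$-injective, then so is $R$.} In this situation, radical invariance gives $H^i_\fm(S) = H^i_{\fm S}(S) = H^i_\fn(S)$, and similarly for $F_{S*}S$, so the Frobenius diagram for $\varphi$
\[
\begin{tikzcd}
H^i_\fm(R) \rar{H^i_\fm(F_R)} \dar{H^i_\fm(\varphi)} & H^i_\fm(F_{R*}R) \dar \\
H^i_\fn(S) \rar{H^i_\fn(F_S)} & H^i_\fn(F_{S*}S)
\end{tikzcd}
\]
has bottom row injective by the $F$-injectivity of $S$. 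A diagram chase reduces the problem to showing $H^i_\fm(\varphi)$ is injective.

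\emph{Completion and splitting.} The map $R \to \widehat R$ is faithfully flat with zero-dimensional closed fiber, so by \cite[Lem.\ A.3]{Mur} it suffices to show $\widehat R$ is $F$-injective. Cyclic purity of $\varphi$ (a consequence of purity) provides injections $R/\fm^n \hookrightarrow S/\fm^n S$ for every $n$; taking inverse limits and using that the $\fm$-adic and $\fn$-adic topologies on $S$ are cofinal (since $\sqrt{\fm S} = \fn$) yields an injection $\widehat R \hookrightarrow \widehat S$. Now $\widehat S$ is $F$-injective by Fedder's remark \cite[Rem.\ on p.\ 473]{Fed83} and hence reduced by Corollary \ref{cor:finjwn}$(\ref{cor:finjred})$, so $\widehat R$ is reduced too. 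Being complete, local, and reduced, $\widehat R$ is approximately Gorenstein by Hochster \cite[Thm.\ 1.7]{Hoc77}. Every $\fm$-primary ideal $J \subseteq \widehat R$ can be written $J = I\widehat R$ for an $\fm$-primary $I \subseteq R$; the Artinian identifications $\widehat R/J = R/I$ and $\widehat S/J\widehat S = S/IS$ combined with cyclic purity of $R \to S$ yield $\widehat R/J \hookrightarrow \widehat S/J\widehat S$. Hochster's splitting theorem for approximately Gorenstein rings \cite{Hoc77} then shows $\widehat R$ is a direct $\widehat R$-module summand of $\widehat S$. Applying $H^i_\fm(-)$ produces a split $\widehat R$-module injection $H^i_\fm(\widehat R) \hookrightarrow H^i_\fn(\widehat S)$, and the Frobenius diagram above, now applied to $\widehat R \to \widehat S$, concludes that $\widehat R$ is $F$-injective.

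The main obstacle will be the technical verification of the base-change identifications (in particular $\varprojlim S/\fm^n S = \widehat S$ and $\widehat S/I\widehat S = S/IS$ for $\fm$-primary $I$) required to descend cyclic purity from $R \to S$ to $\widehat R \to \widehat S$; each of these hinges essentially on the zero-dimensional closed fiber reduction.
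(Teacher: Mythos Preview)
Your reduction to a pure local map with zero-dimensional closed fiber is exactly the paper's; at that point the paper simply cites \cite[Lem.\ A.3]{Mur} and stops, whereas you supply an independent proof of that local statement via completion and Hochster's approximately Gorenstein criterion. The argument works: the identifications $\widehat R/J \cong R/I$ and $\widehat S/J\widehat S \cong S/IS$ go through because $IS$ is $\fn$-primary, reducedness of $\widehat R$ follows from Corollary~\ref{cor:finjwn}$(\ref{cor:finjred})$ (which rests only on Proposition~\ref{prop:finjlocalizes}, so there is no circularity), and the splitting claim is correct---though note that \cite{Hoc77} itself gives cyclic purity $\Rightarrow$ purity over an approximately Gorenstein ring, and the upgrade to an honest splitting is a separate step using completeness of $\widehat R$ and Matlis duality.

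The detour is heavier than necessary. Once $(R,\fm)\to(S,\fn)$ is pure with $\sqrt{\fm S}=\fn$, the map $H^i_\fm(R)\to H^i_{\fm S}(S)=H^i_\fn(S)$ is already injective, since pure ring maps induce injections on Koszul and hence \v Cech cohomology \cite[Cor.\ 6.6]{HR74}; this makes your first-paragraph diagram chase conclusive without ever completing, and is presumably the content of \cite[Lem.\ A.3]{Mur}. Your route trades that fact for Hochster's theorem plus a Matlis-duality splitting, which is a legitimate exchange if one prefers to avoid the Koszul argument. One stylistic point: citing \cite[Lem.\ A.3]{Mur} merely to pass from $R$ to $\widehat R$ is overkill (and a bit awkward, since that is the very lemma you are re-proving); the equivalence of $F$-injectivity for $R$ and $\widehat R$ needs only that local cohomology is unchanged by completion.
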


\begin{proof}
Let $\fp \subseteq R$ be a maximal ideal, and let $\fq \subseteq S$ be a prime ideal that is minimal among primes of $S$ lying over $\fp$ (such a $\fq$ exists because $\varphi$ is surjective on spectra). By Proposition \ref{prop:finjlocalizes}, we know that $S_\fq$ is $F$-injective. Since the local homomorphism $R_\fp \to S_\fq$ is pure with zero-dimensional closed fiber, we have reduced to the case shown in \cite[Lem.\ A.3]{Mur}.
\end{proof}

Our hypothesis on $\varphi$ in Theorem \ref{thm:descentF-injective} implies that $\varphi$ is pure. This is because purity is a local condition, and if we choose a prime $\fq$ of $S$ lying over an arbitrary prime $\fp$ of $R$, then purity of the composition $R_\fp \rightarrow S_\fp \rightarrow S_\fq$ implies $R_\fp \rightarrow S_\fp$ is also pure. 
Maps $\varphi$ satisfying the hypotheses of Theorem \ref{thm:descentF-injective} are also called \textsl{strongly pure} in \cite[p.\ 38]{CGM16}, and there exist examples of pure maps that are not strongly pure \cite[Cor.\ 5.6.2]{CGM16}.
\par For pure maps that are not necessarily strongly pure, we have the following
descent result.
This result is optimal given known counterexamples to descent of $F$-injectivity
under pure (or even split) maps \citeleft\citen{Wat97}\citemid Ex.\ 3.3(1)\citepunct
\citen{Ngu12}\citemid Ex.\ 6.6 and Rem.\ 6.7\citeright\ that are
not quasi-finite.

\begin{proposition}
\label{prop:quasi-finite-pure-descent}
Let $\varphi\colon R \to S$ be a quasi-finite (e.g., module-finite), pure homomorphism
  of Noetherian rings of prime characteristic $p > 0$.
  If $S$ is $F$-injective, so is $R$.
\end{proposition}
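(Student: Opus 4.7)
The strategy is to reduce to the local case via Proposition \ref{prop:finjlocalizes}, use quasi-finiteness to decompose the local cohomology of $S$ into finitely many summands indexed by the primes of $S$ over the maximal ideal of $R$, and exploit purity to compare local cohomology of $R$ and $S$. By Proposition \ref{prop:finjlocalizes}, it suffices to show that $R_\fp$ is $F$-injective for every maximal ideal $\fp \subseteq R$. Replacing $R$ and $S$ by $R_\fp$ and $S_\fp = S \otimes_R R_\fp$ (which preserves purity, quasi-finiteness, and $F$-injectivity of the target), we may assume $(R,\fm)$ is local. Quasi-finiteness then forces $S/\fm S$ to be a finite-dimensional Artinian $\kappa(\fm)$-algebra, so the primes $\fq_1,\dots,\fq_n$ of $S$ lying over $\fm$ are finite in number, exhaust $V(\fm S)$, and are all maximal in $S$.

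The main preliminary ingredient I would establish is the following lemma: if $\psi\colon A \to B$ is a pure homomorphism of Noetherian rings and $I \subseteq A$ is an ideal, then $H^i_I(A) \to H^i_I(B)$ is injective for every $i$. To prove it, compute $H^i_I(-)$ as a filtered colimit of Koszul cohomologies on a generating sequence $\underline{f} = (f_1,\dots,f_r)$ of $I$, and observe that a cocycle in the finite free module $K^i(\underline{f}^n; A)$ which becomes a coboundary in $K^i(\underline{f}^n; B)$ defines a finite system of linear equations over $A$ that is solvable over $B$; purity, in its equivalent formulation in terms of finitely presented modules, then forces the system to be solvable already over $A$. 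This Koszul argument is the main obstacle, though the statement itself is essentially folklore.

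Granting the lemma, we have $H^i_\fm(R) \hookrightarrow H^i_\fm(S)$, and by excision for local cohomology with disjoint closed supports,
\[
  H^i_\fm(S) \;=\; H^i_{V(\fm S)}(S) \;=\; \bigoplus_{j=1}^n H^i_{\fq_j S_{\fq_j}}(S_{\fq_j}).
\]
Each $S_{\fq_j}$ is $F$-injective by Proposition \ref{prop:finjlocalizes}, so Frobenius is injective on each summand, and therefore $H^i_\fm(F_S)\colon H^i_\fm(S) \to H^i_\fm(F_{S*}S)$ is injective. The commutativity of the Frobenius square associated to $\varphi$ then ensures that the composition $H^i_\fm(R) \hookrightarrow H^i_\fm(S) \hookrightarrow H^i_\fm(F_{S*}S)$ factors through $H^i_\fm(F_R)\colon H^i_\fm(R) \to H^i_\fm(F_{R*}R)$, forcing the latter to be injective for every $i$. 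Hence $R$ is $F$-injective.
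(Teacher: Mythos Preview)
Your proof is correct and follows essentially the same approach as the paper's: localize to assume $R$ is local, show the primes of $S$ over $\fm$ are finitely many maximal ideals, decompose $H^i_{\fm S}(S)$ accordingly, and use purity together with the $F$-injectivity of each $S_{\fq_j}$ to conclude. Your argument is in fact slightly more streamlined in two places: you observe directly that the fiber $S/\fm S$ is an Artinian finite-type $\kappa(\fm)$-algebra (the paper invokes Zariski's Main Theorem to reach the same conclusion), and you prove the purity lemma on Koszul/local cohomology by hand (this is \cite[Cor.~6.6]{HR74}, and the paper packages it into the citation of \cite[Lem.~A.3]{Mur}).
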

\begin{proof}
  Let $\fm \subseteq R$ be a maximal ideal.
  Since quasi-finiteness and purity are preserved under base change and $F$-injectivity 
  of $R$ can be checked at its maximal ideals, replacing $\varphi\colon R \to S$ 
  with its localization $\varphi_\fm\colon R_\fm \to S \otimes_R R_\fm$, it suffices 
  to consider the case when $R$ is local.
  \par By \cite[Lem.\ 2.2]{HH95}, there exists a maximal ideal $\fq \subseteq S$
  lying over $\fm$ such that $R \to S_\fq$ is pure.
  Since $\varphi$ is quasi-finite, we also have $\sqrt{\fm S_\fq} = \fq$.
  The statement now follows from \cite[Lem.\ A.3]{Mur}.
\end{proof}

\subsection{Geometrically CMFI rings and infinite purely inseparable extensions}\label{subsect:geomfinjinfiniteext}

In the proof of Theorem \ref{thm:finjascendscmfi}, we need to consider how geometrically CMFI rings interact with possibly infinite purely inseparable extensions of the base field.

\begin{proposition}[cf.\ {\cite[Prop.\ 2.21]{Ene00}}]\label{prop:geomfinjpurelyinsep}
Let $(R,\fm)$ be a geometrically CMFI local $k$-algebra, where $k$ is a field of characteristic $p > 0$. Let $k \subseteq k'$ be a purely inseparable extension (not necessarily finite). If $I$ is an ideal of $R$ generated by a system of parameters, then $I(R \otimes_k k')$ is Frobenius closed in $R \otimes_k k'$. In particular, if $R \otimes_k k'$ is Noetherian, then $R \otimes_k k'$ is CMFI.
\end{proposition}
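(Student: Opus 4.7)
The plan is to reduce to the case of finite purely inseparable extensions and then pass to a filtered colimit. First, I would write $k' = \varinjlim_\alpha k'_\alpha$ with the $k'_\alpha$ ranging over the finite purely inseparable subextensions $k \subseteq k'_\alpha \subseteq k'$. For each $\alpha$, the morphism $\Spec(R \otimes_k k'_\alpha) \to \Spec R$ is a universal homeomorphism (being the base change of the universal homeomorphism $\Spec k'_\alpha \to \Spec k$), so $R \otimes_k k'_\alpha$ is local; it is also Noetherian because $k'_\alpha$ is finite over $k$. The closed fiber $(R/\fm) \otimes_k k'_\alpha$ is Artinian, hence zero-dimensional and Cohen--Macaulay, so the dimension formula for flat local homomorphisms yields $\dim(R \otimes_k k'_\alpha) = \dim R$, and flat base change with Cohen--Macaulay fibers shows $R \otimes_k k'_\alpha$ is Cohen--Macaulay. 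Combined with the geometric $F$-injectivity of $R$ and Remark \ref{rem:gcmiscm}, this makes $R \otimes_k k'_\alpha$ a CMFI local ring.

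Next, if $I = (x_1, \ldots, x_d)$ is a system of parameters of $R$ with $d = \dim R$, then the quotient $(R \otimes_k k'_\alpha)/I(R \otimes_k k'_\alpha) \cong (R/I) \otimes_k k'_\alpha$ is Artinian (as a Noetherian zero-dimensional ring), so $I(R \otimes_k k'_\alpha)$ is a parameter ideal of the CMFI ring $R \otimes_k k'_\alpha$. Lemma \ref{lem:cmfiparam} then gives that $I(R \otimes_k k'_\alpha)$ is Frobenius closed.

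Finally, I would take the filtered colimit. Given $y \in (I(R \otimes_k k'))^F$ with $y^{p^e} \in I^{[p^e]}(R \otimes_k k')$ for some $e > 0$, both $y$ and the finitely many coefficients witnessing the containment $y^{p^e} \in I^{[p^e]}(R \otimes_k k')$ descend to some $R \otimes_k k'_\alpha$ under the presentation $R \otimes_k k' = \varinjlim_\alpha (R \otimes_k k'_\alpha)$. The previous step then yields $y \in I(R \otimes_k k'_\alpha) \subseteq I(R \otimes_k k')$, so $I(R \otimes_k k')$ is Frobenius closed. For the \emph{in particular} assertion, if $R \otimes_k k'$ is Noetherian, the same flat base change reasoning applied directly to $k \subseteq k'$ shows that $R \otimes_k k'$ is a Cohen--Macaulay local ring of dimension $d$ in which $I(R \otimes_k k')$ is a parameter ideal, and since we have just shown this parameter ideal is Frobenius closed, Lemma \ref{lem:cmfiparam}$((\ref{lem:cmfioneparam}) \Rightarrow (\ref{lem:cmfiparamfinj}))$ promotes it to $F$-injectivity, giving the CMFI conclusion. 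The only delicate point is the colimit step, and it is routine because any relation in a filtered colimit of rings already holds at a finite stage.
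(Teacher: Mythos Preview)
Your proposal is correct and follows essentially the same approach as the paper: write $k'$ as a filtered union of finite purely inseparable subextensions, use Lemma~\ref{lem:cmfiparam} on each $R \otimes_k k'_\alpha$ to get Frobenius closedness of the parameter ideal there, and then pass to the colimit. The only cosmetic difference is that for the ``in particular'' clause the paper verifies Cohen--Macaulayness of $R \otimes_k k'$ by checking directly that the image of the parameter sequence remains regular under the faithfully flat map $R \to R \otimes_k k'$, whereas you appeal to flat base change with Cohen--Macaulay (zero-dimensional) closed fiber; both are valid.
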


\begin{proof}
Write $k' = \bigcup_\alpha L_\alpha$ as the directed union of finite purely
inseparable subextensions of $k \subseteq k'$.
Then, $R \otimes_k k' = \bigcup_\alpha R \otimes_k L_\alpha$.
Since $R \hookrightarrow R \otimes_k L_\alpha$ is finite and purely inseparable,
the ideal $I(R \otimes_k L_\alpha)$ is generated by a system of parameters.
Moreover, since $R$ is geometrically CMFI, Lemma \ref{lem:cmfiparam} implies the
ideal $I(R \otimes_k L_\alpha)$ is Frobenius closed.
We then have
\[
\bigl(I(R\otimes_k k')\bigr)^{F} = \bigcup_{\alpha} \bigl(I(R \otimes_k L_\alpha)\bigr)^{F} = \bigcup_{\alpha}I(R \otimes_k L_\alpha) = I(R \otimes_k k').
\]
Here the second and third equalities are straightforward to verify, while the
first equality follows from the equality
\[
  I^{[p^e]}(R \otimes_k k') = \bigcup_\alpha I^{[p^e]}(R \otimes_k L_\alpha).
\]
This shows that $I(R \otimes_k k')$ is Frobenius closed, proving the first assertion of the Proposition.

For the second assertion, suppose $R \otimes_k k'$ is Noetherian.
Since $R \hookrightarrow R \otimes_k k'$ is purely inseparable, it is a flat
local homomorphism of local rings, and $R \otimes_k k'$ is Cohen--Macaulay by
\cite[Cor.\ to Thm.\ 23.3]{Mat89}.
The fact that $R \otimes_k k'$ is $F$-injective then follows by Lemma \ref{lem:cmfiparam} because
$I(R \otimes_k k')$ is a Frobenius closed ideal generated by a system of
parameters.
\end{proof}

\section{Ascent under flat homomorphisms with geometrically CMFI fibers}\label{sect:ascent}
In \cite{Vel95}, V\'elez showed the following base change result for
$F$-rationality:

\begin{citedthm}[{\cite[Thm.\ 3.1]{Vel95}}]\label{thm:velez}
Let $\varphi\colon R \to S$ be a regular homomorphism of locally excellent Noetherian rings of prime characteristic $p > 0$. If $R$ is $F$-rational, then $S$ is also $F$-rational.
\end{citedthm}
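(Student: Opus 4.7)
The plan is to reduce to the local case, invoke Smith's local-cohomological characterization of $F$-rationality, and then transport $F$-rationality from $R$ to $S$ using the Radu--Andr\'e flatness of the relative Frobenius.

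First, I would localize. Since $F$-rationality commutes with localization on locally excellent Noetherian rings, it suffices to prove the following local statement: if $R_\fm \to S_\fn$ is the flat local homomorphism induced at primes $\fn \in \Spec(S)$ and $\fm = \varphi^{-1}(\fn) \in \Spec(R)$, whose closed fiber is geometrically regular and whose source is $F$-rational and excellent, then $S_\fn$ is $F$-rational. Because $F$-rational rings are Cohen--Macaulay and Cohen--Macaulayness ascends along flat maps with Cohen--Macaulay fibers, $S_\fn$ is automatically Cohen--Macaulay; write $d = \dim R_\fm$ and $e = \dim S_\fn/\fm S_\fn$, so $\dim S_\fn = d+e$.

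Next, by Smith's characterization, $F$-rationality of the Cohen--Macaulay local ring $S_\fn$ amounts to the vanishing $0^*_{H^{d+e}_\fn(S_\fn)} = 0$, and $F$-rationality of $R_\fm$ gives $0^*_{H^d_\fm(R_\fm)} = 0$. To connect these, I would pick a system of parameters $x_1,\dots,x_d$ of $R_\fm$, extend it by lifts $y_1,\dots,y_e$ of a regular system of parameters of the regular fiber $S_\fn/\fm S_\fn$ to obtain a system of parameters of $S_\fn$; by flatness the extended sequence is regular on $S_\fn$. A \v{C}ech or Mayer--Vietoris computation then expresses $H^{d+e}_{(\underline x, \underline y)}(S_\fn)$ in terms of $H^d_{(\underline x)}(R_\fm)$ and of local cohomology of the fiber.

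The main step is to apply the Radu--Andr\'e theorem (Theorem~\ref{thm:raduandre}) to conclude that the relative Frobenius $F_{S_\fn/R_\fm}$ is faithfully flat, yielding the factorization $F_{S_\fn} = F_{S_\fn/R_\fm} \circ (\id \otimes F_{R_\fm})$. Given a candidate nonzero class $\xi \in 0^*_{H^{d+e}_\fn(S_\fn)}$, witnessed by a test element $c \in S_\fn$ through the Frobenius relations $c\,\xi^{p^n} = 0$ in $H^{d+e}_\fn(F^n_{S_\fn*}S_\fn)$, I would combine the decomposition from the previous paragraph with the faithful flatness of $F_{S_\fn/R_\fm}$ to extract a nontrivial tight-closure relation in $H^d_\fm(R_\fm)$, contradicting $F$-rationality of $R_\fm$. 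The principal obstacle is the notoriously delicate behavior of tight closure under base change, since tight closure does not commute with arbitrary flat base change. One must in particular produce a test element for $S_\fn$ whose behavior descends to a test element for $R_\fm$; here the excellence hypothesis secures completely stable test elements on both sides, while geometric regularity of the fibers (rather than mere regularity) is what lets the purely inseparable residue field extensions appearing in the Frobenius analysis pass through without spoiling the flatness of $F_{S_\fn/R_\fm}$. I would expect the most technical step in a rigorous write-up to be the careful descent of test elements and Frobenius equations through the Radu--Andr\'e base change.
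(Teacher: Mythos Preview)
This theorem is not proved in the paper at all: it is a cited result from V\'elez \cite[Thm.\ 3.1]{Vel95}, stated only as motivation for Theorem~\ref{thm:finjascendscmfi}. The paper does, however, indicate in a remark after the proof of Theorem~\ref{thm:finjascendscmfi} that V\'elez's original argument proceeds via N\'eron--Popescu desingularization, writing the regular map as a filtered colimit of smooth maps and checking $F$-rationality ascends along smooth maps. Your approach is different: you are essentially transplanting the paper's strategy for $F$-\emph{injectivity} (the Radu--Andr\'e factorization $F_S = F_{S/R}\circ(\id\otimes F_R)$ combined with the local-cohomology decomposition of Lemma~\ref{lem:ascentkeyiso}) to the $F$-rational setting via Smith's criterion.

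That transplant has a genuine gap, and it is exactly the one you flag without resolving. The decomposition $H^{d+e}_\fn(S_\fn)\simeq H^e_J(S_\fn)\otimes_{R_\fm} H^d_\fm(R_\fm)$ and the Radu--Andr\'e flatness control the Frobenius action cleanly, which is why the $F$-injective argument in Theorem~\ref{thm:finjascendscmfi} goes through. But tight closure involves multiplication by a test element $c\in S_\fn^\circ$, and that multiplication does not respect the tensor decomposition in any way that lets you ``extract a nontrivial tight-closure relation in $H^d_\fm(R_\fm)$'': $c$ need not come from $R_\fm$, and even a completely stable test element of $S_\fn$ gives you no control over how $c\cdot(-)$ interacts with the $R_\fm$-factor. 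Saying the most technical step would be ``careful descent of test elements and Frobenius equations'' is not a proof plan---it is a restatement of the problem. The approaches in the literature that push $F$-rationality through flat maps with good fibers (V\'elez via N\'eron--Popescu; Enescu \cite{Ene00}, Hashimoto \cite{Has01}, Aberbach--Enescu \cite{AE03} via other means) all need substantial additional input beyond the Radu--Andr\'e factorization precisely because of this test-element issue; cf.\ Remark~\ref{rem:ascentanalogue-Frational}, where the paper notes $F$-rationality can fail to ascend without excellence even along completion maps. As written, your proposal is a sketch of why the problem is hard rather than a route to its solution.
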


Enescu \cite[Thm.\ 2.27]{Ene00} and Hashimoto \cite[Thm.\ 6.4]{Has01} proved that in fact, it suffices to assume that $\varphi$ has
geometrically $F$-rational fibers.\footnote{Enescu assumes that $B/\fm B
\otimes_{A/\fm} F^e_*(A/\fm)$ is Noetherian for every $e > 0$, which is used
in the proof of \cite[Thm.\ 2.19]{Ene00}.
This latter result holds without this Noetherianity assumption by applying
Proposition \ref{prop:geomfinjpurelyinsep} to the system of parameters
$y_1,y_2,\ldots,y_d$ on $B/\fm B$ (cf.\ \cite[Rem.\ 2.20]{Ene00}).}
Our goal is to show the following analogue of their results for
$F$-injectivity.

\begin{customthm}{A}\label{thm:finjascendscmfi}
Let $\varphi\colon (R,\fm) \to (S,\fn)$ be a flat local homomorphism of
Noetherian local rings of prime characteristic $p > 0$ whose closed fiber $S/\fm S$ is geometrically CMFI over $R/\fm$. If $R$ is $F$-injective, then $S$ is $F$-injective.
\end{customthm}

Since $F$-injectivity localizes (Proposition \ref{prop:finjlocalizes}), we obtain the following non-local result as a consequence:

\begin{corollary}\label{cor:finjascendscmfinonlocal}
Let $\varphi\colon R \to S$ be a flat homomorphism of Noetherian rings of prime characteristic $p > 0$ whose fibers are geometrically CMFI. If $R$ is $F$-injective, then $S$ is $F$-injective.
\end{corollary}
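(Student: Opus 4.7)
The plan is to reduce to the local setting, where Theorem~\ref{thm:finjascendscmfi} applies directly. By Definition~\ref{def:FI}, to show $S$ is $F$-injective it suffices to verify that $S_\fn$ is $F$-injective for each maximal ideal $\fn \subseteq S$. Fix such an $\fn$ and let $\fm \coloneqq \varphi^{-1}(\fn) \subseteq R$. The induced map $R_\fm \to S_\fn$ is then a flat local homomorphism of Noetherian local rings, and $R_\fm$ is $F$-injective by Proposition~\ref{prop:finjlocalizes}.

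The remaining task is to check that the closed fiber $(S/\fm S)_\fn$ of $R_\fm \to S_\fn$ is geometrically CMFI over $\kappa(\fm) = R_\fm/\fm R_\fm$. By hypothesis, the fiber $S/\fm S = S \otimes_R \kappa(\fm)$ is geometrically CMFI over $\kappa(\fm)$, so for any finite purely inseparable extension $\kappa(\fm) \subseteq k'$ the ring $(S/\fm S) \otimes_{\kappa(\fm)} k'$ is CMFI. Because localization commutes with base change, the ring $(S/\fm S)_\fn \otimes_{\kappa(\fm)} k'$ is a localization of the CMFI ring $(S/\fm S) \otimes_{\kappa(\fm)} k'$; since Cohen--Macaulayness localizes and $F$-injectivity localizes by Proposition~\ref{prop:finjlocalizes}, this localization is again CMFI. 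Hence $(S/\fm S)_\fn$ is geometrically CMFI over $\kappa(\fm)$.

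Applying Theorem~\ref{thm:finjascendscmfi} to $R_\fm \to S_\fn$ then yields the $F$-injectivity of $S_\fn$, which completes the argument. There is no genuine obstacle here: all the substantive content sits in the main theorem, and the corollary is a routine localize-and-apply argument. The one point warranting care is the stability of the property ``geometrically CMFI'' under localization at a prime of the base ring, which is exactly where Proposition~\ref{prop:finjlocalizes} is essential.
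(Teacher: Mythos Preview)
Your proof is correct and follows exactly the approach the paper indicates: localize at a maximal ideal of $S$, invoke Proposition~\ref{prop:finjlocalizes} for both $R_\fm$ and the closed fiber, and apply Theorem~\ref{thm:finjascendscmfi}. One small notational slip: since $\fm = \varphi^{-1}(\fn)$ need not be maximal, the fiber over $\fm$ is $S \otimes_R \kappa(\fm)$ rather than $S/\fm S$, but this is harmless because $(S/\fm S)_\fn = (S \otimes_R \kappa(\fm))_\fn$.
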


\begin{remark}
If $\varphi\colon R \to S$ is a flat homomorphism of Noetherian rings of prime characteristic $p > 0$ such that $\varphi$ contracts maximal ideals of $S$ to maximal ideals of $R$ (this holds, for instance, if $\varphi$ is  of finite type and $R$ and $S$ are Jacobson rings \cite[Thm.\ V.3.3]{BouCA}), then $F$-injectivity ascends under the weaker hypotheses that $R$ is $F$-injective and \emph{only} the closed fibers of $\varphi$ are geometrically CMFI. This is because $F$-injectivity of $S$ is checked at the maximal ideals of $S$.
\end{remark}

Theorem \ref{thm:finjascendscmfi} also implies $F$-injectivity is preserved under (strict) Henselization.

\begin{corollary}
\label{cor:henselization-F-inj}
If $(R,\fm)$ is a Noetherian local ring of prime characteristic $p > 0$ that is $F$-injective, then its Henselization $R^h$ and strict Henselization $R^{sh}$ are also $F$-injective.
\end{corollary}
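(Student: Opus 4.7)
The plan is to deduce Corollary \ref{cor:henselization-F-inj} as a direct application of Theorem \ref{thm:finjascendscmfi} to the structure maps $R \to R^h$ and $R \to R^{sh}$. Both of these maps are flat local homomorphisms of Noetherian local rings, so the only hypothesis of Theorem \ref{thm:finjascendscmfi} that needs to be checked is that their closed fibers are geometrically CMFI over $k \coloneqq R/\fm$.

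The closed fiber of $R \to R^h$ is just $k$ itself, since henselization preserves the residue field, while the closed fiber of $R \to R^{sh}$ is a separable closure $k^{\mathrm{sep}}$ of $k$. Both are fields, hence Cohen--Macaulay of dimension zero, so the only remaining point is geometric $F$-injectivity over $k$, i.e., that for every finite purely inseparable extension $k \subseteq k'$, the tensor products $k \otimes_k k'$ and $k^{\mathrm{sep}} \otimes_k k'$ are $F$-injective. The first case is immediate since $k \otimes_k k' = k'$ is a field. For the second, I would invoke the standard fact that a separable algebraic extension and a purely inseparable extension of $k$ are linearly disjoint over $k$ (any element lying in both types of extensions must already belong to $k$); this forces $k^{\mathrm{sep}} \otimes_k k'$ to be a field. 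Since any field is trivially $F$-injective, both tensor products are $F$-injective.

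With these verifications in hand, Theorem \ref{thm:finjascendscmfi} applies to each of $R \to R^h$ and $R \to R^{sh}$ and yields the $F$-injectivity of the corresponding target. I do not anticipate any substantive difficulty: the entire content of the argument is packaged into Theorem \ref{thm:finjascendscmfi}, and the only non-trivial auxiliary input is the elementary linear disjointness statement for separable and purely inseparable extensions of fields.
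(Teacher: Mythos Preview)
Your proposal is correct and takes essentially the same approach as the paper: both apply Theorem \ref{thm:finjascendscmfi} after observing that the closed fibers of $R \to R^h$ and $R \to R^{sh}$ are $k$ and a separable algebraic extension of $k$, respectively, hence geometrically CMFI. The paper additionally notes an alternative argument for $R^h$ via faithfully flat descent from the completion $\widehat{R}$ (using Theorem \ref{thm:descentF-injective}), but your route matches the paper's primary argument.
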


\begin{proof}
The closed fiber of $R \rightarrow R^h$ is an isomorphism \cite[Thm.\
18.6.6$(iii)$]{EGAIV4}, and the closed fiber of $R \rightarrow R^{sh}$ is a separable
algebraic field extension by construction.
In either case, the closed fiber is geometrically CMFI, and Theorem
\ref{thm:finjascendscmfi} implies $R^h$ and
$R^{sh}$ are $F$-injective.
\end{proof}

\par
Theorem \ref{thm:finjascendscmfi} is known to fail without geometric assumptions, even if the fibers are regular (see \cite{SZ13} and \cite[\S4]{Ene09}). Theorem \ref{thm:finjascendscmfi} extends similar base change results due to Hashimoto \cite[Cor.\ 5.7]{Has01} and Aberbach--Enescu \cite[Thm.\ 4.3]{Ene09}, which assume that $R$ is Cohen--Macaulay.

\medskip
\par The key ingredient to proving Theorem \ref{thm:finjascendscmfi} is the injectivity of the action of relative Frobenius on local cohomology, which we study in \S\ref{subsect:raduandrepure} after reviewing some preliminaries on local cohomology in \S\ref{subsect:localcoh}. We then prove Theorem \ref{thm:finjascendscmfi}, which we use to analyze the behavior of geometric $F$-injectivity under arbitrary finitely generated field extensions in \S\ref{subsect:geomfinjarbext}.

\subsection{Preliminaries on local cohomology}\label{subsect:localcoh}
\par In order to prove Theorem \ref{thm:finjascendscmfi}, we will need the following preliminary results on local cohomology and flat base change.

\begin{lemma}\label{lem:ascentkeyiso}
Let $\varphi\colon (R, \fm) \rightarrow (S, \fn)$ be a flat local homomorphism of Noetherian rings and let $I$ be an ideal of $R$.
Let $J = (y_1,y_2,\ldots,y_n) \subseteq \fn$ be an ideal of $S$ such that the images 
of $y_1,y_2,\ldots,y_n$ in $S/{\fm}S$ form a regular sequence on $S/\fm{S}$.
\begin{enumerate}[label=$(\roman*)$,ref=\roman*]
  \item\label{lem:ascentkeyisoflatlc}
    The sequence $y_1,y_2,\dots,y_n$ forms a regular sequence on $S$.
    Moreover, the modules $S/(y_1^t,y_2^t,\ldots,y_n^t)$ and $H^n_J(S)$ are flat
    over $R$ for every integer $t > 0$.

\item
  For every $R$-module $M$, there is an isomorphism
  \begin{equation}\label{eq:ascentkeyiso3}
    H^i_{IS + J}(S \otimes_R M) \simeq H^n_J(S) \otimes_R H^{i-n}_I(M)
  \end{equation}
of $S$-modules that is functorial in $M$. \label{lem:ascentkeyiso3}
\item\label{lem:ascentkeyiso3withfrob}
  Suppose that 
  $R$ and $S$ are of prime characteristic $p > 0$.
  Then, the isomorphism \eqref{eq:ascentkeyiso3} for $M = F^e_{R*}R$ is
  compatible with the relative Frobenius homomorphism, i.e., the diagram
  \[
    \begin{tikzcd}[column sep=scriptsize,row sep=small]
      H^i_{IS + J}(S \otimes_R F^e_{R*}R) \rar{H^i_{IS+J}(F^e_{S/R})}
      \arrow[phantom]{d}[sloped]{\simeq}&
      H^i_{IS + J}(F^e_{S*}S) \arrow[phantom]{d}[sloped]{\simeq}\\
      H^n_J(S) \otimes_R H^{i-n}_I(F^e_{R*}R) \rar & 
      F^e_{S*}\bigl(H^n_J(S) \otimes_R H^{i-n}_I(R)\bigr)
    \end{tikzcd}
  \]
  commutes, where the bottom horizontal homomorphism is that induced by the
  Frobenius action $H^i_{J}(F^e_S)$ on the local cohomology module
  $H^n_J(S)$ as in Lemma \ref{lem:sillylemma}.
\end{enumerate}
\end{lemma}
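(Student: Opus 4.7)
For part (i), I would invoke a standard result for flat local extensions: by \cite[Cor.\ to Thm.\ 22.5]{Mat89} applied inductively to each power $y_1^t,\ldots,y_n^t$ (which remains a regular sequence on $S/\fm S$), the elements $y_1^t,\ldots,y_n^t$ form a regular sequence on $S$, and $S/(y_1^t,\ldots,y_n^t)$ is $R$-flat. The standard Koszul/\v{C}ech description of top local cohomology then identifies $H^n_J(S) = \varinjlim_t S/(y_1^t,\ldots,y_n^t)$ under the transition maps given by multiplication by $y_1\cdots y_n$, exhibiting $H^n_J(S)$ as a filtered colimit of flat $R$-modules, hence $R$-flat.

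For (ii), the approach is to combine the Grothendieck composition spectral sequence
\[
E_2^{p,q} = H^p_J\bigl(H^q_{IS}(S \otimes_R M)\bigr) \Longrightarrow H^{p+q}_{IS+J}(S \otimes_R M)
\]
(valid because $\Gamma_{IS+J} = \Gamma_J \circ \Gamma_{IS}$ and $\Gamma_{IS}$ preserves injectives) with the flat base-change identification $H^q_{IS}(S\otimes_R M) \simeq S \otimes_R H^q_I(M)$. The decisive step is to show that for every $R$-module $N$, $H^p_J(S \otimes_R N) = 0$ for $p \ne n$ and $H^n_J(S \otimes_R N) = H^n_J(S) \otimes_R N$. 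Here one uses that $\check{C}^\bullet(\underline{y}; S \otimes_R N) = \check{C}^\bullet(\underline{y}; S) \otimes_R N$ and that $\check{C}^\bullet(\underline{y}; S)$ is a bounded complex of $R$-flat modules (localizations of the $R$-flat ring $S$) with cohomology concentrated in degree $n$ equal to $H^n_J(S)$; thus $\check{C}^\bullet(\underline{y}; S)$ is quasi-isomorphic in $D(R)$ to $H^n_J(S)[-n]$, and tensoring with $N$ over $R$ (invoking the $R$-flatness of $H^n_J(S)$ from (i)) yields the claim. Substituting $N = H^q_I(M)$ collapses the spectral sequence to the single row $p = n$, giving the isomorphism \eqref{eq:ascentkeyiso3}; functoriality in $M$ is inherited from each step.

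For (iii), I would trace through the \v{C}ech-level construction used in (ii). The relative Frobenius $F^e_{S/R}$ induces a morphism of \v{C}ech complexes
\[
\check{C}^\bullet(\underline{x},\underline{y}; S \otimes_R F^e_{R*}R) \longrightarrow \check{C}^\bullet(\underline{x},\underline{y}; F^e_{S*}S) = F^e_{S*}\check{C}^\bullet(\underline{x},\underline{y}; S);
\]
under the identifications $H^n_J(F^e_{S*}S) = F^e_{S*}H^n_J(S)$ and $H^{i-n}_I(F^e_{R*}R) = F^e_{R*}H^{i-n}_I(R)$ (both coming from the fact that localizing a Frobenius pushforward at a generator amounts to the Frobenius pushforward of the corresponding localization), passing to cohomology identifies the induced bottom-row map with precisely the homomorphism $\widetilde{H^n_J(F^e_S)}$ produced by Lemma \ref{lem:sillylemma}. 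The hard part will be the bookkeeping: tracking the various Frobenius twists of module structures on both sides and verifying that the explicit formula $F^e_{S/R}(s \otimes r) = s^{p^e}\varphi(r)$ translates, through the isomorphism of (ii), into the formula $\widetilde{\psi}(t \otimes x) = \psi(t) \otimes x$ from Lemma \ref{lem:sillylemma}. Once this translation is carried out, commutativity of the diagram follows from the naturality of every construction involved.
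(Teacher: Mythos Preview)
Your proof of $(\ref{lem:ascentkeyisoflatlc})$ matches the paper's exactly. For $(\ref{lem:ascentkeyiso3})$ you take a genuinely different route: you use the Grothendieck spectral sequence for $\Gamma_{IS+J} = \Gamma_J \circ \Gamma_{IS}$, collapse it using flat base change and the $R$-flatness of $\check{C}^\bullet(\underline{y};S)$, and read off the isomorphism. The paper instead writes down an explicit chain of \v{C}ech-complex (quasi-)isomorphisms
\[
  \check{C}^\bullet(\underline{y},\underline{x};S\otimes_R M)
  \simeq \check{C}^\bullet(\underline{y};S)\otimes_R \check{C}^\bullet(\underline{x};M)
  \overset{\text{qis}}{\longrightarrow} H^n_J(S)[-n]\otimes_R \check{C}^\bullet(\underline{x};M),
\]
the last step justified (as you also observe) by the fact that a quasi-isomorphism between bounded complexes of $R$-flat modules survives tensoring by an arbitrary complex. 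Both arguments are correct; yours is more conceptual, the paper's more explicit.

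The difference matters for $(\ref{lem:ascentkeyiso3withfrob})$. Your outline for $(\ref{lem:ascentkeyiso3withfrob})$ switches to the double \v{C}ech complex $\check{C}^\bullet(\underline{x},\underline{y};-)$ and traces $F^e_{S/R}$ through that, which is exactly the paper's method. But the isomorphism whose Frobenius-compatibility you must check is the one you actually constructed in $(\ref{lem:ascentkeyiso3})$, namely the edge map of a collapsed spectral sequence, and you have not said why that agrees with the \v{C}ech isomorphism. This is not a fatal gap---the identification can be made, or you can simply redo $(\ref{lem:ascentkeyiso3})$ via the explicit \v{C}ech chain---but as written your $(\ref{lem:ascentkeyiso3})$ and $(\ref{lem:ascentkeyiso3withfrob})$ are proving compatibility of $F^e_{S/R}$ with two a priori different isomorphisms. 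The paper's uniform \v{C}ech approach buys exactly this: the diagram chase in $(\ref{lem:ascentkeyiso3withfrob})$ is literally the chain from $(\ref{lem:ascentkeyiso3})$ with $F^e_{S/R}$ threaded through each arrow, so no reconciliation step is needed.
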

\begin{proof}
\par For $(\ref{lem:ascentkeyisoflatlc})$, the flatness of $H^n_J(S)$
follows from the flatness of $S/(y_1^t,y_2^t,\ldots,y_n^t)$, since the former is
a filtered colimit of modules of the latter form.
It therefore suffices to note that for every integer $t > 0$, the
images of $y_1^t,y_2^t,\dots, y_n^t$ in $S/\fm S$
remain a regular sequence on $S/\fm S$, after which the rest of
$(\ref{lem:ascentkeyisoflatlc})$ follows from
\cite[Lem.\ 7.10$(b)$]{HH94}.

We now show $(\ref{lem:ascentkeyiso3})$.
Let $x_1,x_2,\ldots,x_m \in R$ be a set of generators for $I$.
We will denote by $\mathbf{x}$ and $\mathbf{y}$ the $m$- and $n$-tuples
$(x_1,x_2,\ldots,x_m)$ and $(y_1,y_2,\ldots,y_n)$, respectively.
We then have the following chain of isomorphisms and quasi-isomorphisms of
complexes:
\begin{equation}\label{eq:ascentkeyiso3chain}
  \begin{aligned}
    \check{C}^\bullet(\mathbf{y},\mathbf{x};S \otimes_R M)
    &\overset{\sim}{\longleftarrow}
    \check{C}^\bullet(\mathbf{y};S) \otimes_S S \otimes_R
    \check{C}^\bullet(\mathbf{x};R) \otimes_R M\\
    &\overset{\sim}{\longrightarrow}
    \check{C}^\bullet(\mathbf{y};S) \otimes_R
    \check{C}^\bullet(\mathbf{x};M)\\
    &\overset{\text{qis}}{\longrightarrow}
    H^n_J(S)[-n] \otimes_R \check{C}^\bullet(\mathbf{x};M).
\end{aligned}
\end{equation}
Here, the first two isomorphisms follow from the definition of the
\v{C}ech complex \cite[Def.\ 6.26]{twentyfour}.
The last quasi-isomorphism follows from $(\ref{lem:ascentkeyisoflatlc})$:
The \v{C}ech complex $\check{C}^\bullet(\mathbf{y};S)$ is a complex of flat
$R$-modules quasi-isomorphic to the complex $H^n_J(S)[-n]$ of flat $R$-modules by the fact
that $y_1,y_2,\dots,y_n$ is a regular sequence on $S$. Note that the
 quasi-isomorphism $\check{C}^\bullet(\mathbf{y};S) \rightarrow H^n_J(S)[-n]$
remains a quasi-isomorphism after tensoring by $\check{C}^\bullet(\mathbf{x};M)$
by \cite[Cor.\ 5 to
Thm.\ X.4.3]{BouA}.
By \cite[Exp.\ II, Prop.\ 5]{SGA2}, applying cohomology in
\eqref{eq:ascentkeyiso3chain}
yields the desired isomorphism \eqref{eq:ascentkeyiso3}, since all involved
(quasi-)isomorphisms are functorial in $M$.
\par We now show $(\ref{lem:ascentkeyiso3withfrob})$.
Setting $M = F^e_{R*}R$ in the situation of $(\ref{lem:ascentkeyiso3})$, we
trace the relative Frobenius homomorphism $F^e_{S/R}$ through the chain of
(quasi-)isomorphisms \eqref{eq:ascentkeyiso3chain}:
\[
  \begin{tikzcd}
    \check{C}^\bullet(\mathbf{y},\mathbf{x};S \otimes_R F^e_*R)
    \rar{\check{C}^\bullet(\mathbf{y},\mathbf{x};F^e_{S/R})}
    \arrow[leftarrow]{d}[sloped,above]{\sim}
    & \check{C}^\bullet(\mathbf{y},\mathbf{x};F^e_{S*}S)
    \arrow[leftarrow]{d}[sloped,above]{\sim}\\
    \bigl(
    \check{C}^\bullet(\mathbf{y};S) \otimes_S S \otimes_R
    \check{C}^\bullet(\mathbf{x};R)\bigr) \otimes_R F^e_{*R}R
    \rar
    \arrow{d}[sloped,above]{\sim}
    & F^e_{S*}\bigl(
    \check{C}^\bullet(\mathbf{y};S) \otimes_S S \otimes_R
    \check{C}^\bullet(\mathbf{x};R)
    \bigr)
    \arrow{d}[sloped,above]{\sim}\\
    \check{C}^\bullet(\mathbf{y};S) \otimes_R
    F^e_{*R}\check{C}^\bullet(\mathbf{x};R)
    \rar
    \arrow{d}{\text{qis}}
    & F^e_{S*}\bigl(
    \check{C}^\bullet(\mathbf{y};S) \otimes_R
    \check{C}^\bullet(\mathbf{x};R)
    \bigr)
    \arrow{d}{\text{qis}}\\
    H^n_J(S)[-n] \otimes_R F^e_{*R}\check{C}^\bullet(\mathbf{x};R)
    \rar
    & F^e_{S*}\bigl( H^n_J(S)[-n] \otimes_R
    \check{C}^\bullet(\mathbf{x};R)\bigr)\mathrlap{.}
  \end{tikzcd}
\]
The last two horizontal maps in the above diagram are
induced by the homomorphism of
modules in Lemma \ref{lem:sillylemma} via the
Frobenius action on each term of the complex $\check{C}^\bullet(\mathbf{y};S)$
and on $H^n_J(S)$, respectively.
To ensure the diagram commutes, one can trace elements of the individual terms of
the complexes through the specific
description of the homomorphisms involved.
Applying cohomology gives the statement in
$(\ref{lem:ascentkeyiso3withfrob})$.
\end{proof}

\subsection{Proof of Theorem \ref{thm:finjascendscmfi}}
\label{subsect:raduandrepure}
The key technical result used in the proof of Theorem \ref{thm:finjascendscmfi} is the following:

\begin{proposition}\label{prop:ene219}
Let $\varphi\colon (R,\fm) \to (S,\fn)$ be a flat local homomorphism of
Noetherian local rings of prime characteristic $p > 0$ whose closed fiber $S/\fm S$ is geometrically CMFI over $R/\fm$. Suppose $J = (y_1,y_2,\ldots,y_n)$ is an ideal of $S$ such that the images of $y_1,y_2,\ldots,y_n$ in $S/\fm S$ form a system of parameters in $S/\fm S$. For every Artinian $R$-module $M$ and for every $e >0$, the homomorphism
\[
  \begin{tikzcd}[nodes={execute at begin node=\everymath{\displaystyle}},row sep=0,column sep=1.475em]
    \frac{S}{J} \otimes_R F^e_{R*}M \rar & F^e_{S*}\biggl(
    \frac{S}{J^{[p^e]}} \otimes_R M \biggr)\\
    (s + J) \otimes m \rar[mapsto] & (s^{p^e} + J^{[p^e]}) \otimes m
\end{tikzcd}
\]
from Lemma \ref{lem:sillylemma} is injective.
\end{proposition}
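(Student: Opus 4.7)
The plan is to reduce injectivity of the map to the case $M = R/\fm$ by standard d\'evissage, and then handle the base case by passing to the perfect closure of $A = R/\fm$ so that Proposition \ref{prop:geomfinjpurelyinsep} applies. For the first reduction, every Artinian module over the Noetherian local ring $R$ is $\fm$-torsion, so $M = \varinjlim_n (0 :_M \fm^n)$ is the filtered colimit of its finite-length submodules. The map in question is functorial in $M$, and both sides commute with filtered colimits (tensor products do, and $F^e_{R*}, F^e_{S*}$ are merely restrictions of scalars). Since filtered colimits preserve injections, it suffices to treat $M$ of finite length.

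For the second reduction, I would induct on $\ell(M)$ using a short exact sequence $0 \to M' \to M \to R/\fm \to 0$ with $\ell(M') = \ell(M) - 1$. By Lemma \ref{lem:ascentkeyiso}(\ref{lem:ascentkeyisoflatlc}), both $S/J$ and $S/J^{[p^e]}$ are flat over $R$, so the functors $(S/J) \otimes_R F^e_{R*}(-)$ and $F^e_{S*}(S/J^{[p^e]} \otimes_R -)$ are exact. Applying them to the short exact sequence produces a commutative ladder of short exact sequences connecting the corresponding maps for $M'$, $M$, and $R/\fm$, and a short diagram chase shows that injectivity of the outer two maps forces injectivity of the middle one. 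This reduces the proposition to the case $M = R/\fm$.

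For this base case, set $A = R/\fm$, $B = S/\fm S$, and $B' = B \otimes_A A^{\mathrm{perf}}$, where $A^{\mathrm{perf}}$ is the perfect closure of $A$. Unraveling the definitions, the map takes the form
\[
  \mu_A \colon (B/JB) \otimes_A F^e_{A*}A \longrightarrow F^e_{B*}(B/J^{[p^e]}B), \qquad b \otimes a \longmapsto b^{p^e}\cdot a.
\]
Since $B$ is geometrically CMFI over $A$ and $JB$ is generated by a system of parameters of $B$, Proposition \ref{prop:geomfinjpurelyinsep} gives that $JB'$ is Frobenius closed in $B'$, equivalently that the Frobenius $F^e \colon B'/JB' \to B'/J^{[p^e]}B'$ is injective. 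I would then compare $\mu_A$ to this Frobenius through the commutative square
\[
  \begin{tikzcd}
    (B/JB) \otimes_A F^e_{A*}A \rar{\mu_A} \dar[swap]{g} & B/J^{[p^e]}B \dar{f}\\
    B'/JB' \rar{F^e} & B'/J^{[p^e]}B',
  \end{tikzcd}
\]
in which $g$ is induced by the $A$-linear injection $F^e_{A*}A \hookrightarrow A^{\mathrm{perf}}$, $x \mapsto x^{1/p^e}$, and $f$ is base change along $A \hookrightarrow A^{\mathrm{perf}}$. Since $A$ is a field, $B/JB$ is $A$-flat, so $g$ is injective; and $f$ is injective because $A \hookrightarrow A^{\mathrm{perf}}$ is a faithfully flat extension of fields. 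Hence $f \circ \mu_A = F^e \circ g$ is a composition of injections, and $\mu_A$ itself must be injective.

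I expect the main obstacle to be the base case: the d\'evissage is routine given Lemma \ref{lem:ascentkeyiso}, but tying the relative Frobenius $\mu_A$ to a controllable Frobenius on $B'/JB'$ — which Proposition \ref{prop:geomfinjpurelyinsep} constrains by leveraging geometric CMFI over arbitrary purely inseparable base changes — is the step where the geometric hypothesis is essentially consumed.
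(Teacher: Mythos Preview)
Your proof is correct and follows essentially the same approach as the paper: reduce from Artinian to finite length via colimits, induct on length, and for the base case $M = R/\fm$ factor the relative Frobenius through the absolute Frobenius on a purely inseparable base change of $B = S/\fm S$, invoking Proposition~\ref{prop:geomfinjpurelyinsep} to get Frobenius closedness of $JB'$. The only cosmetic differences are that the paper uses the finite extension $k^{1/p^e}$ in place of your $A^{\mathrm{perf}}$ and presents the Artinian reduction via an explicit commutative square rather than your colimit argument; neither affects the substance.
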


\begin{proof}
We first reduce to the case when $M$ is of finite length.
Write $M$ as the union $\bigcup_\alpha M_\alpha$ of its finitely generated $R$-submodules $M_\alpha$.
Since $M$ is Artinian, the $R$-modules $M_\alpha$ are of finite length.
The statement for $M$ then follows from the statement for the $M_\alpha$ by
taking filtered colimits.

\par We now assume $M$ is of finite length. We proceed by induction on the length of $M$. If $M$ has length $1$, then $M \simeq R/\fm \eqqcolon k$ is the residue field of $R$. It therefore suffices to show 
\begin{equation}\label{eq:thishomomorphism}
  \begin{tikzcd}[nodes={execute at begin node=\everymath{\displaystyle}},
    row sep=0,column sep=1.475em]
    \frac{S}{J} \otimes_R F^e_{R*}k \rar &
    F^e_{S*}\biggl( \frac{S}{J^{[p^e]}} \otimes_R k \biggr)\\
    s \otimes r \rar[mapsto] & s^{p^e} \otimes r
  \end{tikzcd}
\end{equation}
is injective for every positive integer $e$.
Under the composition of isomorphisms
\[
  \frac{S}{J^{[p^f]}} \otimes_R (-) \simeq \frac{S}{J^{[p^f]}} \otimes_R
  \frac{R}{\fm} \otimes_k (-) \simeq \frac{S}{\fm S+J^{[p^f]}} \otimes_k (-)
\]
of functors on the category of $R$-modules annihilated by the maximal ideal
$\fm$ for both $f = 0$ and $f = e$,
the homomorphism \eqref{eq:thishomomorphism} can be identified with the first homomorphism in the composition
\begin{equation}\label{eq:cmficlosedfiber}
\mathclap{\begin{tikzcd}[nodes={execute at begin node=\everymath{\displaystyle}}, row sep=0,column sep=small,ampersand replacement=\&]
\frac{S}{\fm S+J} \otimes_k F^e_{k*}k \rar \&
  F^e_{(S/\fm S)*}\Biggl( \frac{S}{\fm S+J^{[p^e]}} \otimes_k k\Biggr)
      \rar \&
      F^e_{(S/\fm S)*}\Biggl( \frac{S}{\fm S+J^{[p^e]}} \otimes_k
      F^e_{k*}k\Biggr)\\
      s \otimes r \rar[mapsto] \& s^{p^e} \otimes r\\
      \& \tilde{s} \otimes \tilde{r} \rar[mapsto] \& \tilde{s} \otimes
      \tilde{r}^{p^e}\mathrlap{.}
  \end{tikzcd}}
\end{equation}
Since $S/\fm S$ is geometrically CMFI over $R/\fm$, and the ideal $J(S/\fm S)$ is Frobenius closed in $S/\fm S$ by Lemma \ref{lem:cmfiparam}, we then get by Proposition \ref{prop:geomfinjpurelyinsep} that $J(S/\fm S \otimes_k F^e_{k*}k)$ is Frobenius closed in $S/\fm S \otimes_k F^e_{k*}k \simeq S/\fm S \otimes_k k^{1/p^e}$. Since 
\[
\frac{S}{\fm S+J} \otimes_k F^e_{k*}k \simeq \frac{S}{J} \otimes_{S} \bigg{(}\frac{S}{\fm S} \otimes_k F^e_{k*}k \bigg{)} \simeq \frac{S/\fm S \otimes_k F^e_{k*}k}{J(S/\fm S \otimes_k F^e_{k*}k)},
\]
and similarly,
\[
F^e_{(S/\fm S)*}\Biggl( \frac{S}{\fm S+J^{[p^e]}} \otimes_k F^e_{k*}k\Biggr) \simeq F^e_{(S/\fm S)*}\bigg{(}\frac{S/\fm S \otimes_k F^e_{k*}k}{J^{[p^e]}(S/\fm S \otimes_k F^e_{k*}k)}\bigg{)},
\]
it follows that the composition in \eqref{eq:cmficlosedfiber} is injective for all $e > 0$. Therefore in particular, the first homomorphism must be injective.
    
\par It remains to prove the inductive step, in which case there exist two $R$-modules $M_1,M_2$ of length strictly less than that of $M$, together with a short exact sequence
\[
    0 \longrightarrow M_1 \longrightarrow M \longrightarrow M_2 \longrightarrow
    0.
\]
We then have the commutative diagram
  \[
    \begin{tikzcd}[nodes={execute at begin
      node=\everymath{\displaystyle}},column sep=scriptsize]
      0 \rar
      & \frac{S}{J} \otimes_R F^e_{R*}M_1 \rar\dar[hook]
      & \frac{S}{J} \otimes_R F^e_{R*}M \rar\dar
      & \frac{S}{J} \otimes_R F^e_{R*}M_2 \rar\dar[hook]
      & 0\\
      0 \rar
      & F^e_{S*}\biggl(\frac{S}{J^{[p^e]}} \otimes_R M_1 \biggr) \rar
      & F^e_{S*}\biggl(\frac{S}{J^{[p^e]}} \otimes_R M \biggr) \rar
      & F^e_{S*}\biggl(\frac{S}{J^{[p^e]}} \otimes_R M_2 \biggr) \rar
      & 0
    \end{tikzcd}
  \]
where the rows are exact since $S/J^{[p^t]}$ is flat over $R$ for every $t\ge 0$ by Lemma \ref{lem:ascentkeyiso}$(\ref{lem:ascentkeyisoflatlc})$. The two outer homomorphisms are injective by inductive hypothesis, hence the middle homomorphism is injective by the snake lemma.
\end{proof}

We are now ready to prove Theorem \ref{thm:finjascendscmfi}.

\begin{proof}[Proof of Theorem \ref{thm:finjascendscmfi}] Consider the factorization
  \[
    S \xrightarrow{\id_S \otimes_R F_R} S \otimes_R F_{R*}R
    \xrightarrow{F_{S/R}} F_{S*}S
  \]
of the Frobenius homomorphism $F_S\colon S \to F_{S*}S$, where $F_{S/R}$ is the relative Frobenius homomorphism of Definition \ref{def:radu-andre}. This factorization induces the factorization

  \begin{equation}\label{eq:raduandrefactorlc}
    H^i_\fn(S) \xrightarrow{H^i_\fn(\id_S \otimes_R F_R)} H^i_\fn(S \otimes_R F_{R*}R) \xrightarrow{H^i_\fn(F_{S/R})} H^i_\fn(F_{S*}S)
  \end{equation}
of $H^i_\fn(F_S)$. To show that $S$ is $F$-injective, i.e., that $H^i_\fn(F_S)$ is injective, it suffices to show that the two homomorphisms in \eqref{eq:raduandrefactorlc} are injective.

\par We start by setting up some notation. Since $S/\fm S$ is Cohen--Macaulay, there exists a sequence of elements $y_1,y_2,\ldots,y_n \in S$ whose image in $S/\fm S$ is a system of parameters in $S/\fm S$, hence is also a regular sequence in $S/\fm S$. Now consider the ideal
  \[
    J \coloneqq (y_1,y_2,\ldots,y_n) \subseteq S.
  \]
Note that $\sqrt{{\fm}S + J} = \fn$, and hence $H^i_{\fn}(S) = H^i_{\fm S + J}(S)$ for all $i$.

\par We now show that the first homomorphism in \eqref{eq:raduandrefactorlc} is injective. Applying the functorial isomorphism of Lemma \ref{lem:ascentkeyiso}$(\ref{lem:ascentkeyiso3})$ to the Frobenius homomorphism $F_R\colon R \to F_{R*}R$ and the ideal $I = \fm$ of $R$, we obtain the commutative diagram
  \[
    \begin{tikzcd}[column sep=8.25em,row sep=small]
      H^i_{\fn}(S)
      \rar{H^i_\fn(\id_S \otimes_R F_R)} \arrow[phantom]{d}[sloped]{\simeq} &
      H^i_{\fn}(S \otimes_R F_{R*}R)
      \arrow[phantom]{d}[sloped]{\simeq}\\
      H^n_{J}(S) \otimes_R H^{i-n}_\fm(R)
      \rar[hook]{\id_{H^n_{J}(S)} \otimes_R H^{i-n}_{\fm}(F_R)} &
      H^n_{J}(S) \otimes_R H^{i-n}_{\fm}(F_{R*}R)\mathrlap{.}
    \end{tikzcd}
  \]
The horizontal homomorphism in the bottom row is injective by
the $F$-injectivity of $R$, and then by using the flatness of $H^n_{J}(S)$ over $R$ in Lemma \ref{lem:ascentkeyiso}$(\ref{lem:ascentkeyisoflatlc})$. By the commutativity of the diagram, we see that the first homomorphism in \eqref{eq:raduandrefactorlc} is injective.
  
\par It remains to show that the second homomorphism in \eqref{eq:raduandrefactorlc} is injective.
By Lemma \ref{lem:ascentkeyiso}$(\ref{lem:ascentkeyiso3withfrob})$, this
homomorphism can be identified with the homomorphism
\begin{align}
    H^n_J(S) \otimes_R F_{R*}H^{i-n}_\fm(R) &\longrightarrow
    F_{S*}\bigl(H^n_J(S) \otimes_R H^{i-n}_\fm(R)\bigr)\label{eq:identifiedlcmap}
\intertext{from Lemma \ref{lem:sillylemma}. Since $H^{i-n}_\fm(R)$ is Artinian  \cite[Thm.\ 7.1.3]{BS13}, Proposition \ref{prop:ene219} implies the homomorphisms}
    \frac{S}{J^{[p^e]}} \otimes_R F_{R*}H^{i-n}_\fm(R) &\longrightarrow
    F_{S*}\biggl(\frac{S}{J^{[p^{e+1}]}} \otimes_R H^{i-n}_\fm(R)\biggr)\nonumber
\end{align}
are injective for every integer $e > 0$ (note that $J^{[p^e]}$ is also generated by elements whose images in $S/\fm S$ form a system of parameters). Taking the colimit over all $e$, we see that the homomorphism \eqref{eq:identifiedlcmap} is injective, and hence the second homomorphism in \eqref{eq:raduandrefactorlc} is injective as well.
\end{proof}

\begin{remark}
In some special cases, one can give simpler proofs of Theorem \ref{thm:finjascendscmfi}.
\begin{enumerate}[label=$(\alph*)$]    
    \item If $\varphi$ is regular (resp.\ $F$-pure in the sense of Definition \ref{def:fpuremap} and has Cohen--Macaulay closed fiber), then
      $F_{S/R}$ is faithfully flat by the Radu--Andr\'e theorem (Theorem \ref{thm:raduandre}) (resp.\ pure by definition). The second homomorphism in \eqref{eq:raduandrefactorlc} is therefore injective by the fact that pure ring homomorphisms induce injective maps on Koszul cohomology \cite[Cor.\ 6.6]{HR74}, hence on local cohomology \cite[Exp.\ II, Prop.\ 5]{SGA2}.
      
       \item When $\varphi$ is a regular homomorphism, it is also possible to prove Theorem \ref{thm:finjascendscmfi} using N\'eron--Popescu desingularization \citeleft\citen{Pop86}\citemid Thm.\ 2.5\citepunct\citen{Swa98}\citemid Cor.\ 1.3\citeright, following the strategy in V\'elez's proof of Theorem \ref{thm:velez} in \cite{Vel95}.
\end{enumerate}
\end{remark}

\begin{remark}\label{rem:ascentanalogue-Frational}
The analogue of Theorem \ref{thm:finjascendscmfi} is false for the closely
related notion of $F$-rationality without additional assumptions, since
completions of Gorenstein $F$-rational rings need not be $F$-rational
\cite[\S5]{LR01}, and for completion maps, the closed fiber is even
geometrically regular.
When $R$ and $S$ are excellent, however, $F$-rationality does ascend when the base ring $R$ is $F$-rational and the closed fiber is geometrically $F$-rational \cite[Thm.\ 4.3]{AE03}.
\par The analogue of Theorem \ref{thm:finjascendscmfi} also fails for $F$-purity, even for rings of finite type over a field, by Singh's example of a finite type algebra over a field for which $F$-regularity/$F$-purity does not deform \cite[Prop.\ 4.5]{Sin99}.
\end{remark}

\subsection{Geometrically \emph{F}-injective rings and finitely generated field extensions}\label{subsect:geomfinjarbext}
We show that one can take arbitrary finitely generated field extensions in the definition of geometric $F$-injectivity (Definition \ref{def:FI}). We start with the following field-theoretic result.

\begin{citedlem}[{\cite[\href{https://stacks.math.columbia.edu/tag/04KM}{Tag 04KM}]{stacks-project}}]\label{lem:egaiv2467}
Let $k$ be a field of characteristic $p > 0$, and let $k \subseteq k'$ be a finitely generated field extension. Then, there is a finite purely inseparable extension $k \subseteq k_1$, such that we have a diagram
  \[
    \begin{tikzcd}[sep={2.5em,between origins}]
      & k_2 &\\
      k' \arrow[dash]{ur} & & k_1\arrow[dash]{ul}\\
      & k\arrow[dash]{ul}\arrow[dash]{ur} &
    \end{tikzcd}
  \]
of finitely generated field extensions, where $k_1 \subseteq k_2 \coloneqq (k' \otimes_k k_1)_\red$ is a separable field extension.
\end{citedlem}

We can now show that geometric $F$-injectivity is preserved under base change by finitely generated field extensions:

\begin{proposition}\label{prop:geomfinjextensions}
Let $R$ be a Noetherian $k$-algebra, where $k$ is a field of characteristic $p > 0$.
\begin{enumerate}[label=$(\roman*)$,ref=\roman*]
\item Suppose $R$ is $F$-injective, and consider a finitely generated separable field extension $k \subseteq k'$. Then, the ring $R \otimes_k k'$ is $F$-injective.\label{prop:finjseparable}

\item Suppose $R$ is geometrically $F$-injective over $k$, and consider a finitely generated field extension $k \subseteq k'$. Then, the ring $R \otimes_k k'$ is $F$-injective.\label{prop:geomfinjseparable}

\item Suppose $R$ is Cohen--Macaulay and geometrically $F$-injective over $k$, and consider a finitely generated field extension $k \subseteq k'$. Then, the ring $R \otimes_k k'$ is Cohen--Macaulay and $F$-injective.\label{prop:geomCMFIfingen}
\end{enumerate}
\end{proposition}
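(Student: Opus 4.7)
The plan is to deduce (i) directly from Corollary \ref{cor:finjascendscmfinonlocal}, then bootstrap to (ii) and (iii) using Lemma \ref{lem:egaiv2467} to split a general finitely generated extension into a purely inseparable piece (handled by the geometric hypothesis) and a separable piece (handled by (i)), finishing with a faithfully flat descent via Theorem \ref{thm:descentF-injective}. The main subtlety will be verifying that in the Hasse diagram of Lemma \ref{lem:egaiv2467}, the induced inclusion $k' \hookrightarrow k_2$ is finite purely inseparable, so that base-changing by $R$ yields a faithfully flat map $R \otimes_k k' \to R \otimes_k k_2$ to which one can apply descent.

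For (i), I would note that a finitely generated separable field extension $k \subseteq k'$ is smooth, so the base change $R \to R \otimes_k k'$ is smooth, hence flat with geometrically regular fibers; in particular the fibers are geometrically CMFI, and Corollary \ref{cor:finjascendscmfinonlocal} applies to conclude $R \otimes_k k'$ is $F$-injective.

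For (ii), I would invoke Lemma \ref{lem:egaiv2467} to produce a finite purely inseparable extension $k \subseteq k_1$ with $k_2 \coloneqq (k' \otimes_k k_1)_{\red}$ a finitely generated separable extension of $k_1$. By geometric $F$-injectivity, $R \otimes_k k_1$ is $F$-injective (and Noetherian, being finite over $R$), so (i) applied to the separable extension $k_1 \subseteq k_2$ gives that $R \otimes_k k_2 \simeq (R \otimes_k k_1) \otimes_{k_1} k_2$ is $F$-injective. The key observation is that the natural map $k' \to k_2$ is a finite purely inseparable field extension: $k_2$ is a finite $k'$-algebra (quotient of the finite $k'$-algebra $k' \otimes_k k_1$) and is a field by the lemma, and every element of $k_1 \subseteq k' \otimes_k k_1$ has a $p^e$-th power in $k$ for some $e$, so its image in $k_2$ has a $p^e$-th power in $k'$; since $k_2$ is $k'$-spanned by such elements, $k_2/k'$ is purely inseparable. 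Consequently, $R \otimes_k k' \to R \otimes_k k_2$ is finite and faithfully flat, and Theorem \ref{thm:descentF-injective} yields that $R \otimes_k k'$ is $F$-injective.

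For (iii), the $F$-injectivity of $R \otimes_k k'$ is already granted by (ii), so only the Cohen--Macaulay conclusion remains. I would trace through the same diagram: since $k/k_1$ is finite, the fibers of $R \to R \otimes_k k_1$ are Artinian, hence Cohen--Macaulay, so $R \otimes_k k_1$ is Cohen--Macaulay by flat ascent; next, $R \otimes_k k_1 \to R \otimes_k k_2$ is smooth (base change of a smooth extension of fields), so $R \otimes_k k_2$ is Cohen--Macaulay; finally, the faithfully flat map $R \otimes_k k' \to R \otimes_k k_2$ allows Cohen--Macaulayness to descend (\cite[Cor.\ to Thm.\ 23.3]{Mat89}). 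The main obstacle, as already indicated, is the verification that $k' \hookrightarrow k_2$ is finite purely inseparable — a short direct computation once the output of Lemma \ref{lem:egaiv2467} is in hand, but the entire argument hinges on it.
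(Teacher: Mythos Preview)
Your proposal is correct and follows essentially the same route as the paper: part (i) via Corollary~\ref{cor:finjascendscmfinonlocal} applied to the regular map $R \to R \otimes_k k'$, part (ii) via Lemma~\ref{lem:egaiv2467} combined with (i) and faithfully flat descent (Theorem~\ref{thm:descentF-injective}), and part (iii) by adding the Cohen--Macaulay statement. Two minor simplifications: the ``main subtlety'' you flag is not actually needed---any field extension is faithfully flat, so once Lemma~\ref{lem:egaiv2467} tells you $k_2$ is a field containing $k'$, the map $R \otimes_k k' \to R \otimes_k k_2$ is faithfully flat without checking pure inseparability; and for (iii) the paper simply cites the general fact that Cohen--Macaulayness is preserved under base change by finitely generated field extensions \cite[Rem.\ on p.\ 182]{Mat89}, which spares you the ascent/descent chase through the diagram.
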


\begin{proof}
$(\ref{prop:finjseparable})$ follows from Corollary \ref{cor:finjascendscmfinonlocal}, since by separability of $k \subseteq k'$, the ring homomorphism $R \otimes_k k \to R \otimes_k k'$ is regular \cite[Prop.\ 4.6.1]{EGAIV2}.
  
\par We now show $(\ref{prop:geomfinjseparable})$. Let $k_1$ and $k_2$ be as in Lemma \ref{lem:egaiv2467}. Since the homomorphism $R \otimes_k k' \to R \otimes_k k_2$ is faithfully flat by base change, it suffices to show that $R \otimes_k k_2$ is $F$-injective by Theorem \ref{thm:descentF-injective}. Since $R$ is geometrically $F$-injective and $k_1$ is a finite purely inseparable field extension, $R \otimes_k k_1$ is $F$-injective. Moreover, since $k_1 \subseteq k_2$ is a finitely generated separable extension, $(\ref{prop:finjseparable})$ then implies $R \otimes_k k_2 \simeq (R \otimes_k k_1) \otimes_{k_1} k_2$ is $F$-injective.

\par Finally $(\ref{prop:geomCMFIfingen})$ follows from $(\ref{prop:geomfinjseparable})$ and the fact that Cohen--Macaulayness is preserved under base change by finitely generated field extensions \cite[Rem.\ on p.\ 182]{Mat89}. 
\end{proof}

\section{CMFI homomorphisms and openness of \normalfont{\emph{F}}-injective loci}\label{sect:cmfihoms}

After reviewing some basic material on CMFI homomorphisms in \S\ref{subsect:cmfihomsbasic}, we prove Theorem \ref{thm:finjlocusopen}, which says that $F$-injective locus is open in many cases, including in the setting of (\ref{property:openness}). We also give an example of a locally excellent ring for which the $F$-injective locus is not open (Example \ref{ex:finjnotopen}).  In \S\ref{subsect:finjgraded}, we use Theorems \ref{thm:finjascendscmfi} and \ref{thm:finjlocusopen} to show that $F$-injectivity of a graded ring can be detected by localizing at the irrelevant ideal.

\subsection{Definition and properties of CMFI homomorphisms}\label{subsect:cmfihomsbasic}
We begin by defining CMFI homomorphisms and proving some basic properties about
them.

\begin{definition}[cf.\ {\cite[Def.\ 5.4]{Has01}}]\label{def:cmfihom}
Let $\varphi\colon R \to S$ be a flat homomorphism of Noetherian rings.
We say that $\varphi$ is \textsl{$F$-injective} if, for every prime ideal $\fp \subseteq R$, the fiber $S \otimes_R \kappa(\fp)$ of $\varphi$ over $\fp$ is geometrically $F$-injective over $\kappa(\fp)$.
We say that $\varphi$ is \textsl{Cohen--Macaulay $F$-injective} or \textsl{CMFI} if $\varphi$ is Cohen--Macaulay (i.e.\ all fibers of $\varphi$ are Cohen--Macaulay) and $F$-injective. A morphism $f\colon X \to Y$ of locally Noetherian schemes of prime characteristic $p > 0$ is \textsl{CMFI} if $f$ is flat with Cohen--Macaulay and geometrically $F$-injective fibers. 
\end{definition}

As in Remark \ref{rem:gcmiscm}, a flat homomorphism $\varphi$ is CMFI if and only if the fibers of $\varphi$ are geometrically CMFI.

\begin{example}
Since regular homomorphisms are CMFI, if $(R,\fm)$ is an excellent local ring (or more generally, a local $G$-ring), then the canonical map $R \to \widehat{R}$ is CMFI.
\end{example}

We show that the classes of $F$-injective and CMFI homomorphisms
satisfy nice properties.

\begin{lemma}[{cf.\ \cite[Cor.\ 5.6]{Has01}}]\label{lem:finjhomprops}
Let $\varphi\colon R \to S$ and $\psi\colon S \to T$ be homomorphisms of Noetherian rings.
\begin{enumerate}[label=$(\roman*)$,ref=\roman*]
	\item If $\varphi$ is $F$-injective (resp.\ CMFI), then every base change of $\varphi$ along a homomorphism essentially of finite type is $F$-injective (resp.\ CMFI).\label{lem:finjhombasechange}
	
	\item If $\psi$ is faithfully flat and $\psi \circ \varphi$ is $F$-injective (resp.\ CMFI), then $\varphi$ is $F$-injective (resp.\ CMFI).\label{lem:finjhomdescent}

    \item If $\varphi$ is $F$-injective (resp.\ CMFI) and $\psi$ is CMFI, then $\psi \circ \varphi$ is $F$-injective (resp.\ CMFI). \label{lem:finjhomtrans}
    \end{enumerate}
Analogous results hold for $F$-injective and CMFI morphisms of schemes.
\end{lemma}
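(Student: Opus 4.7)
The plan is to reduce each of $(\ref{lem:finjhombasechange})$--$(\ref{lem:finjhomtrans})$ to fiberwise statements about geometric $F$-injectivity, since flatness and Cohen--Macaulayness of fibers each enjoy the analogous permanence properties by standard commutative algebra. Geometric $F$-injectivity of a fiber over $\kappa(\fp)$ will be verified in each part by fixing an arbitrary finite purely inseparable extension $k'/\kappa(\fp)$ and showing that the corresponding scalar extension is $F$-injective.

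For $(\ref{lem:finjhombasechange})$, given $R \to R'$ essentially of finite type and a prime $\fp' \subseteq R'$ lying over $\fp \subseteq R$, the fiber of $R' \to S \otimes_R R'$ at $\fp'$ is $(S \otimes_R \kappa(\fp)) \otimes_{\kappa(\fp)} \kappa(\fp')$, with $\kappa(\fp')/\kappa(\fp)$ finitely generated. Any further finite purely inseparable scalar extension will still be a finitely generated extension of $\kappa(\fp)$, so the claim will reduce directly to Proposition \ref{prop:geomfinjextensions}$(\ref{prop:geomfinjseparable})$ applied to the geometrically $F$-injective $\kappa(\fp)$-algebra $S \otimes_R \kappa(\fp)$.

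For $(\ref{lem:finjhomdescent})$, standard faithfully flat descent will yield flatness of $\varphi$. For each $\fp \subseteq R$ and each finite purely inseparable $k'/\kappa(\fp)$, the map $S \otimes_R k' \to T \otimes_R k'$ is a faithfully flat base change of $\psi$ with $F$-injective target, so $S \otimes_R k'$ will be $F$-injective by Theorem \ref{thm:descentF-injective}.

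The main obstacle is $(\ref{lem:finjhomtrans})$, where one must control fibers of a base change of the CMFI map $\psi$ by a possibly nontrivial purely inseparable extension. Composition preserves flatness, so I will fix $\fp \subseteq R$ and a finite purely inseparable $k'/\kappa(\fp)$, and set $A \coloneqq S \otimes_R k'$ and $B \coloneqq T \otimes_R k'$. Since $\varphi$ is $F$-injective, $A$ is $F$-injective; and $A \to B$ is flat as a base change of $\psi$. For each prime $\mathfrak{a} \subseteq A$ with image $\fq \subseteq S$ (necessarily $\fq \cap R = \fp$), the fiber of $A \to B$ at $\mathfrak{a}$ is $(T \otimes_S \kappa(\fq)) \otimes_{\kappa(\fq)} \kappa(\mathfrak{a})$, and $\kappa(\mathfrak{a})/\kappa(\fq)$ is finite purely inseparable because $k'/\kappa(\fp)$ is (the spectrum of $\kappa(\fq) \otimes_{\kappa(\fp)} k'$ has a unique point with purely inseparable residue field). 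Since $\psi$ is CMFI, the fiber $T \otimes_S \kappa(\fq)$ is geometrically CMFI over $\kappa(\fq)$, and transitivity of pure inseparability then makes $(T \otimes_S \kappa(\fq)) \otimes_{\kappa(\fq)} \kappa(\mathfrak{a})$ geometrically CMFI over $\kappa(\mathfrak{a})$. Corollary \ref{cor:finjascendscmfinonlocal} applied to $A \to B$ will then force $B$ to be $F$-injective, settling the $F$-injective case of $(\ref{lem:finjhomtrans})$; the CMFI case additionally needs $B$ to be Cohen--Macaulay, which will follow by composition since $A$ is Cohen--Macaulay and $A \to B$ has Cohen--Macaulay fibers. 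The scheme-theoretic statements are local and will follow by checking each assertion on an affine open cover.
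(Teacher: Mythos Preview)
Your proposal is correct and follows essentially the same approach as the paper: reduce to fiberwise statements and invoke Proposition~\ref{prop:geomfinjextensions}, Theorem~\ref{thm:descentF-injective}, and Corollary~\ref{cor:finjascendscmfinonlocal} respectively for parts $(\ref{lem:finjhombasechange})$, $(\ref{lem:finjhomdescent})$, and $(\ref{lem:finjhomtrans})$. The one organizational difference is that the paper handles $(\ref{lem:finjhomtrans})$ by first invoking $(\ref{lem:finjhombasechange})$ to conclude that the base change $S \otimes_R k' \to T \otimes_R k'$ of $\psi$ along the finite map $S \to S \otimes_R k'$ is itself CMFI, whereas you verify the geometric CMFI property of its fibers by hand via the purely inseparable extension $\kappa(\mathfrak{a})/\kappa(\fq)$; your argument is exactly what $(\ref{lem:finjhombasechange})$ would give in this special case, so you could shorten the write-up by citing $(\ref{lem:finjhombasechange})$ there.
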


\begin{proof}
We first show $(\ref{lem:finjhombasechange})$. By \cite[Lem.\ 7.3.7]{EGAIV2}, it suffices to note that if $R$ is a geometrically $F$-injective (resp.\ geometrically CMFI) $k$-algebra, then $R \otimes_k k'$ is also geometrically $F$-injective (resp.\ geometrically CMFI) for every finitely generated field extension $k \subseteq k'$. This property holds by Proposition \ref{prop:geomfinjextensions}$(\ref{prop:geomfinjseparable})$ (resp.\ Proposition \ref{prop:geomfinjextensions}$(\ref{prop:geomCMFIfingen})$).

We next prove $(\ref{lem:finjhomdescent})$. Since $\psi \circ \varphi$ is flat and $\psi$ is faithfully flat, it follows that $\varphi$ is flat.
Consider $\fp \in \Spec(R)$ and consider a finite, pure inseparable field
extension $k \coloneqq \kappa(\fp) \subseteq k'$.
Since $T \otimes_k k'$ is $F$-injective (resp.\ CMFI) by assumption, it follows that $S \otimes_k k'$ is $F$-injective (resp.\ CMFI) by
Theorem \ref{thm:descentF-injective} (resp.\ \cite[Lem.\ 4.6]{Has10}).

For the proof of $(\ref{lem:finjhomtrans})$, consider $\fp \in \Spec(R)$ and
consider a finite, purely inseparable field extension $k \coloneqq \kappa(\fp)
\subseteq k'$.
The induced map
  \[
    \psi \otimes_k \id_{k'} \colon S \otimes_k k' \longrightarrow T \otimes_k
    k'
  \]
is CMFI by $(\ref{lem:finjhombasechange})$ since $S \rightarrow S
\otimes_k k'$ is module-finite. By Corollary
\ref{cor:finjascendscmfinonlocal} (resp.\ \cite[Thm.\ 4.3]{Ene09}),
we find that
$T \otimes_k k'$ is $F$-injective (resp.\ CMFI).
\end{proof}

As a consequence, the class of Noetherian rings with geometrically $F$-injective or geometrically CMFI formal fibers is closed under taking essentially of finite type ring homomorphisms.

\begin{corollary}
Let $R$ be a Noetherian ring of prime characteristic $p > 0$, and let $S$ be an essentially of finite type $R$-algebra. If $R$ has geometrically $F$-injective (resp.\ geometrically CMFI) formal fibers, then so does $S$.
\end{corollary}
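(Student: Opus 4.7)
Fix a prime $\fq \subseteq S$ with contraction $\fp = \fq \cap R$. Showing that the formal fiber of $S$ at $\fq$ is geometrically $F$-injective (resp.\ CMFI) amounts to showing that the completion map $S_\fq \to \widehat{S_\fq}$ is $F$-injective (resp.\ CMFI) in the sense of Definition \ref{def:cmfihom}. My plan is to factor this completion through the base-change ring $T \coloneqq S_\fq \otimes_{R_\fp} \widehat{R_\fp}$ and to apply Lemma \ref{lem:finjhomprops}. The first factor $S_\fq \to T$ is a base change of the formal-fiber map $R_\fp \to \widehat{R_\fp}$ along the essentially of finite type map $R_\fp \to S_\fq$, so by the hypothesis on $R$ together with Lemma \ref{lem:finjhomprops}$(\ref{lem:finjhombasechange})$ it is $F$-injective (resp.\ CMFI). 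The second factor $T \to \widehat{T}$ will be the completion of a Noetherian local ring that is essentially of finite type over the complete Noetherian local ring $\widehat{R_\fp}$; since complete Noetherian local rings are excellent and excellence passes to essentially of finite type algebras, $T$ is a $G$-ring, so $T \to \widehat{T}$ has geometrically regular, and therefore in particular CMFI, fibers. A final application of Lemma \ref{lem:finjhomprops}$(\ref{lem:finjhomtrans})$ to the composition will then yield the conclusion.

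The main technical point to justify is that $T$ really is a Noetherian local ring whose maximal-adic completion recovers $\widehat{S_\fq}$. A direct computation gives $T/\fq T \cong \kappa(\fq) \otimes_{\kappa(\fp)} \widehat{R_\fp}/\fp \widehat{R_\fp} = \kappa(\fq)$, so $\fq T$ is the unique maximal ideal of $T$, since every maximal ideal of $T$ must contract to the maximal ideal $\fq S_\fq$ of $S_\fq$ and hence correspond to a maximal ideal of $T/\fq T$. Using $\widehat{R_\fp}/\fp^n \widehat{R_\fp} \cong R_\fp/\fp^n R_\fp$ together with the fact that $\fp^n$ annihilates $S_\fq/\fq^n S_\fq$, one checks that $T/\fq^n T \cong S_\fq/\fq^n S_\fq$ for every $n$; passing to inverse limits then gives $\widehat{T} \cong \widehat{S_\fq}$.

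Apart from this identification, which is the only place where any real computation is required, every other ingredient has been either proved earlier in the paper (the base change and composition properties of $F$-injective and CMFI homomorphisms recorded in Lemma \ref{lem:finjhomprops}) or is the standard fact that complete Noetherian local rings are excellent. The principal obstacle is therefore simply to set up the factorization $S_\fq \to T \to \widehat{T} \cong \widehat{S_\fq}$ correctly; once that is in place, the result follows formally from the permanence statements of the preceding subsection.
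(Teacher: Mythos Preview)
Your overall strategy is sound and gives a pleasant direct argument, whereas the paper instead invokes \cite[Cor.\ 7.4.5]{EGAIV2} and simply verifies the axioms $(\mathrm{P}_{\mathrm{I}})$--$(\mathrm{P}_{\mathrm{IV}})$ required there. Your approach is essentially an unwinding of that EGA machinery in this specific case, which is instructive.

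However, there is a genuine gap: the ring $T = S_\fq \otimes_{R_\fp} \widehat{R_\fp}$ is \emph{not} local in general, and your justification ``every maximal ideal of $T$ must contract to the maximal ideal $\fq S_\fq$ of $S_\fq$'' is false. For a concrete counterexample in characteristic $p > 2$, take $R = \FF_p[t]_{(t)}$, $S = R[x]/(x^2 - t - 1)$, and $\fq = (t,x-1)$. Then $\widehat{R_\fp} = \FF_p\llbracket t\rrbracket$, and since $1+t$ acquires a square root $u$ in $\FF_p\llbracket t\rrbracket$ by Hensel's lemma, one computes $S \otimes_R \FF_p\llbracket t\rrbracket \cong \FF_p\llbracket t\rrbracket \times \FF_p\llbracket t\rrbracket$. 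After localizing at the image of $S \smallsetminus \fq$, one of the two minimal primes of this product becomes a maximal ideal of $T$ distinct from $\fq T$, and it contracts to $(0)$ in $S_\fq$ rather than to $\fq S_\fq$.

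The fix is straightforward: replace $T$ by its localization $T_{\fq T}$. Your computation $T/\fq^n T \cong S_\fq/\fq^n S_\fq$ remains valid and shows $\widehat{T_{\fq T}} \cong \widehat{S_\fq}$. The map $S_\fq \to T_{\fq T}$ is the composition of $S_\fq \to T$ (which is $F$-injective, resp.\ CMFI, by your base-change argument) with the localization $T \to T_{\fq T}$ (which is regular, hence CMFI), so Lemma \ref{lem:finjhomprops}$(\ref{lem:finjhomtrans})$ still applies. Since $T_{\fq T}$ is a localization of a ring essentially of finite type over the excellent ring $\widehat{R_\fp}$, it is excellent, and the rest of your argument goes through unchanged.
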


\begin{proof}
By \cite[Cor.\ 7.4.5]{EGAIV2}, it suffices to show that the property ``geometrically $F$-injective'' (resp.\ ``geometrically CMFI'') satisfies the assertions (P\textsubscript{I}), (P\textsubscript{II}), and (P\textsubscript{III}) from \cite[(7.3.4)]{EGAIV2}, and the assertion (P\textsubscript{IV}) from \cite[(7.3.6)]{EGAIV2}. First, (P\textsubscript{III}) holds since fields are CMFI. Next, (P\textsubscript{I}) and (P\textsubscript{II}) hold by Lemmas \ref{lem:finjhomprops}$(\ref{lem:finjhomtrans})$ and \ref{lem:finjhomprops}$(\ref{lem:finjhomdescent})$, respectively. Finally, (P\textsubscript{IV}) holds by Proposition \ref{prop:geomfinjextensions}$(\ref{prop:geomfinjseparable})$ (resp.\ Proposition \ref{prop:geomfinjextensions}$(\ref{prop:geomCMFIfingen})$).
\end{proof}

We also find that $F$-injectivity often interacts well with tensor products:

\begin{corollary}
\label{cor:tensor-products-Finj}
Let $k$ be a perfect field of characteristic $p > 0$. If $R$ is a CMFI $k$-algebra, then for any essentially of finite type $F$-injective $k$-algebra $S$, the tensor product $R \otimes_k S$ is $F$-injective.
\end{corollary}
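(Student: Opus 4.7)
The plan is to view the formation of $R \otimes_k S$ as a base change and then apply the non-local ascent result (Corollary \ref{cor:finjascendscmfinonlocal}) to the flat homomorphism $S \to R \otimes_k S$. First I would note that $R \otimes_k S$ is Noetherian, since $S$ is essentially of finite type over $k$ and $R$ is Noetherian, so $R \otimes_k S$ is essentially of finite type over $R$. Next, since $k$ is a field, the structure morphism $k \to R$ is flat, and therefore so is its base change $S \to R \otimes_k S$.

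The main point is then to verify that the fibers of $S \to R \otimes_k S$ are geometrically CMFI over the corresponding residue fields. Fix $\fq \in \Spec(S)$. The fiber of $S \to R \otimes_k S$ over $\fq$ is $R \otimes_k \kappa(\fq)$. Because $S$ is essentially of finite type over $k$, the residue field $\kappa(\fq)$ is a finitely generated field extension of $k$; and because $k$ is perfect, every field extension of $k$ is separable. Consequently, for any finite purely inseparable extension $\kappa(\fq) \subseteq L$, the extension $k \subseteq L$ is again finitely generated (and separable). Applying Proposition \ref{prop:geomfinjextensions}$(\ref{prop:geomCMFIfingen})$ to $R$ and the finitely generated extension $k \subseteq L$ yields that
\[
(R \otimes_k \kappa(\fq)) \otimes_{\kappa(\fq)} L \;\simeq\; R \otimes_k L
\]
is Cohen--Macaulay and $F$-injective. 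Taking $L = \kappa(\fq)$ shows the fiber itself is CMFI, and ranging over all finite purely inseparable $L/\kappa(\fq)$ shows it is geometrically CMFI over $\kappa(\fq)$ (in the sense of Definition \ref{def:cmfi} and Remark \ref{rem:gcmiscm}).

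With flatness and geometrically CMFI fibers in hand, and using that $S$ is $F$-injective by hypothesis, Corollary \ref{cor:finjascendscmfinonlocal} applied to $S \to R \otimes_k S$ concludes that $R \otimes_k S$ is $F$-injective. The only potential obstacle is the geometric $F$-injectivity of the fibers rather than mere $F$-injectivity, but the perfectness of $k$ collapses this issue: every finite purely inseparable extension of any residue field of $S$ remains a separable (hence finitely generated separable) extension of $k$, so Proposition \ref{prop:geomfinjextensions}$(\ref{prop:geomCMFIfingen})$ is directly applicable.
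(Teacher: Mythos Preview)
Your proof is correct and follows essentially the same approach as the paper: both show that $S \to R \otimes_k S$ is a flat homomorphism with geometrically CMFI fibers and then invoke Corollary~\ref{cor:finjascendscmfinonlocal}. The only cosmetic difference is that the paper packages the fiber check as ``$k \to R$ is CMFI since $k$ is perfect, hence so is its essentially-of-finite-type base change $S \to R \otimes_k S$'' via Lemma~\ref{lem:finjhomprops}$(\ref{lem:finjhombasechange})$, whereas you unfold that base change lemma directly using Proposition~\ref{prop:geomfinjextensions}$(\ref{prop:geomCMFIfingen})$; one small point you leave implicit is that $R$ is automatically \emph{geometrically} CMFI over the perfect field $k$ (needed to invoke that proposition), but this is immediate.
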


\begin{proof}
Since $k$ is perfect, the $k$-algebra map $k \rightarrow R$ is CMFI. Thus, by essentially of finite type base change (Lemma \ref{lem:finjhomprops}$(\ref{lem:finjhombasechange})$), $S \rightarrow R \otimes_k S$ is also CMFI. Then $R \otimes_k S$ is $F$-injective by ascent of $F$-injectivity under CMFI homomorphisms (Corollary \ref{cor:finjascendscmfinonlocal}) because $S$ is $F$-injective by hypothesis.
\end{proof}

\subsection{Openness of the \emph{F}-injective locus}\label{subsect:finjopen}
We first prove a result on the behavior of $F$-injective loci under CMFI
morphisms.

\begin{proposition}\label{prop:ascdescopen}
Let $f\colon X \rightarrow Y$ be a CMFI morphism of locally Noetherian schemes of prime characteristic $p >0$. Let $\FI(X)$ (resp.\ $\FI(Y)$) be the locus of points in $X$ (resp.\ $Y$) at which $X$ (resp.\ $Y$) is $F$-injective. We then have the following:
\begin{enumerate}[label=$(\roman*)$,ref=\roman*]
  \item $f^{-1}(\FI(Y)) = \FI(X)$.\label{prop:cmfifilocus}
\item If the $F$-injective locus of $Y$ is open, then the $F$-injective locus of
  $X$ is open.\label{prop:cmfifiopen}
\item If $f$ is faithfully flat and quasi-compact, then $\FI(X)$ is open
  if and only if $\FI(Y)$ is open.\label{prop:cmfisurj}
\end{enumerate}
\end{proposition}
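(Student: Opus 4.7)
The plan is to derive all three parts from the set-theoretic identity in $(\ref{prop:cmfifilocus})$, which itself reduces to a local question about the faithfully flat local homomorphism $\cO_{Y,y} \to \cO_{X,x}$ for each $x \in X$ with image $y = f(x)$. The closed fiber of this map is naturally isomorphic to the local ring $\cO_{X_y,x}$ of the scheme-theoretic fiber $X_y = f^{-1}(y)$ at the point $x$.

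The technical heart of $(\ref{prop:cmfifilocus})$ is verifying that $\cO_{X_y,x}$ is itself geometrically CMFI over $\kappa(y)$. Cohen--Macaulayness localizes automatically. For $F$-injectivity, given a finite purely inseparable extension $\kappa(y) \subseteq k'$, the morphism $\Spec(k') \to \Spec(\kappa(y))$ is a universal homeomorphism, so $X_y \otimes_{\kappa(y)} k'$ contains a unique point $x'$ lying above $x$, and a direct computation with localizations yields
\[
\cO_{X_y,x} \otimes_{\kappa(y)} k' \;\simeq\; \cO_{X_y \otimes_{\kappa(y)} k',\, x'}.
\]
The right-hand side is $F$-injective by Proposition \ref{prop:finjlocalizes}, since $X_y \otimes_{\kappa(y)} k'$ is $F$-injective by the geometric CMFI assumption on $f$. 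Once this is established, Theorem \ref{thm:finjascendscmfi} applied to $\cO_{Y,y} \to \cO_{X,x}$ yields the inclusion $f^{-1}(\FI(Y)) \subseteq \FI(X)$, and Theorem \ref{thm:descentF-injective} applied to the same (faithfully flat) map yields the reverse inclusion $\FI(X) \subseteq f^{-1}(\FI(Y))$.

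Part $(\ref{prop:cmfifiopen})$ then follows from $(\ref{prop:cmfifilocus})$ immediately, since $f$ is continuous and pulls back the open set $\FI(Y)$ to an open set. For part $(\ref{prop:cmfisurj})$, the forward direction is $(\ref{prop:cmfifiopen})$, while the converse uses the standard topological fact that a quasi-compact faithfully flat morphism of schemes is submersive, i.e., a subset $V \subseteq Y$ is open if and only if $f^{-1}(V)$ is open; this is an instance of faithfully flat quasi-compact descent of open immersions. Applying this to $V = \FI(Y)$ and using $f^{-1}(\FI(Y)) = \FI(X)$ from $(\ref{prop:cmfifilocus})$ completes the argument. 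Note that the parenthetical equivalence between \emph{surjective} and \emph{faithfully flat} in the hypothesis holds because $f$ is already flat by assumption.

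The principal obstacle is the geometric CMFI verification for the closed fiber in $(\ref{prop:cmfifilocus})$; the remaining ingredients are direct applications of the localization, descent, and ascent results already established earlier in the paper.
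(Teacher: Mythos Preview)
Your proof is correct and follows essentially the same approach as the paper. The paper's argument is more terse---it simply asserts that the closed fiber is geometrically CMFI ``since it is the localization of the fiber of $f$''---while you spell out explicitly why geometric $F$-injectivity passes to the localization; but the logical structure (reduce to the local map $\cO_{Y,f(x)} \to \cO_{X,x}$, apply ascent via Theorem~\ref{thm:finjascendscmfi} and descent via Theorem~\ref{thm:descentF-injective}, then deduce $(\ref{prop:cmfifiopen})$ by continuity and $(\ref{prop:cmfisurj})$ by fpqc submersiveness, citing \cite[Cor.\ 2.3.12]{EGAIV2}) is identical.
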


\begin{proof}
$(\ref{prop:cmfifiopen})$ follows from $(\ref{prop:cmfifilocus})$ by continuity
of $f$, and $(\ref{prop:cmfisurj})$ follows from $(\ref{prop:cmfifiopen})$ by
\cite[Cor.\ 2.3.12]{EGAIV2} because $f$ is quasi-compact and faithfully flat.
Thus, it suffices to prove $(\ref{prop:cmfifilocus})$.
It suffices to show that for all $x \in X$,
the local ring $\mathcal{O}_{X,x}$ is $F$-injective if and only if $\mathcal{O}_{Y,f(x)}$ is $F$-injective.
This follows from Theorems
\ref{thm:finjascendscmfi} and
\ref{thm:descentF-injective} since
the flat local homomorphism
  \[
    \mathcal{O}_{X,x} \longrightarrow \mathcal{O}_{Y,f(x)}
  \]
  induced by $f$ is CMFI by \cite[Cor.\ 4.11]{Has10}.
\end{proof}

\begin{remark}
Proposition \ref{prop:ascdescopen}$(\ref{prop:cmfifilocus})$ fails for morphisms
with geometrically
CMFI fibers without the flatness hypothesis. Indeed, the canonical map
\[
  \Spec\bigl(k[x]/(x^2)\bigr) \longrightarrow \Spec\bigl(k[x]\bigr)
\]
has geometrically CMFI fibers and $k[x]$ is
$F$-injective, but $k[x]/(x^2)$ is not since it is not reduced (Corollary \ref{cor:finjwn}($\ref{cor:finjred}$)). However, in this
case the $F$-injective locus of $k[x]/(x^2)$ is still open, albeit empty.
\end{remark}
The next result affirmatively answers a question of the second author \cite[Rem.\ 3.6]{Mur}.

\begin{customthm}{B}\label{thm:finjlocusopen}
Let $R$ be a ring essentially of finite type over a Noetherian local ring $(A,\fm)$ of prime characteristic $p > 0$, and suppose that $A$ has Cohen--Macaulay and geometrically $F$-injective formal fibers. Then, the $F$-injective locus is open in $\Spec(R)$.
\end{customthm}

We note that the hypotheses on the formal fibers of $A$ are satisfied when $A$ is excellent, or more generally, a $G$-ring.

\begin{proof}
Let $A \to \widehat{A}$ be the completion of $A$ at $\fm$, and let $\Lambda$ be
a $p$-basis for $\widehat{A}/\fm\widehat{A}$ as in the gamma construction of
Hochster--Huneke (see \cite[$(6.11)$]{HH94} or \cite[Constr.\ 3.1]{Mur}). For every cofinite subset $\Gamma \subseteq \Lambda$, consider the commutative diagram
  \[
    \begin{tikzcd}
      A \dar \rar & \widehat{A} \rar\dar & \widehat{A}^\Gamma\dar\\
      R \rar{\pi} & R \otimes_A \smash{\widehat{A}} \rar{\pi^\Gamma}
      & R \otimes_A \smash{\widehat{A}^\Gamma}
    \end{tikzcd}
  \]
where the squares are co-Cartesian. By the gamma construction \cite[Thm.\ 3.4$(ii)$]{Mur}, there exists a cofinite subset $\Gamma \subseteq \Lambda$ such that $\pi^\Gamma$ induces a homeomorphism on spectra identifying $F$-injective loci. Since $R \otimes_A \widehat{A}^\Gamma$ is $F$-finite, the $F$-injective locus of $\Spec(R \otimes_A \widehat{A}^\Gamma)$ is open by \cite[Lem.\ A.2]{Mur}, and so, the $F$-injective locus of $R \otimes_A \widehat{A}$ is also open. Since an essentially of finite type base change of a faithfully flat CMFI morphism is faithfully flat and CMFI by Lemma \ref{lem:finjhomprops}$(\ref{lem:finjhombasechange})$, it follows that $\pi\colon R \to R \otimes_A \widehat{A}$ is a faithfully flat CMFI homomorphism of Noetherian rings. Thus, the $F$-injective locus of $R$ is open by Proposition \ref{prop:ascdescopen}$(\ref{prop:cmfisurj})$. 
\end{proof}

\begin{remark}\label{rem:gamma3forfinj}
  Let $(A,\fm,k)$ be a Noetherian complete local ring of prime characteristic $p
  > 0$.
  In the proof of \cite[Thm.\ 3.4$(ii)$]{Mur}, it is stated that the following
  property holds when $\cP$ is the property ``$F$-injective'':
  \begin{enumerate}[label=$(\Gamma\arabic*)$,ref=\ensuremath{\Gamma}\arabic*]
      \setcounter{enumi}{2}
    \item\label{axiom:gammaascent}
      For every local ring $B$ essentially of finite type over $A$, if $B$ is
      $\cP$, then there exists a cofinite subset $\Gamma_1 \subseteq \Lambda$
      such that $B^\Gamma \coloneqq B \otimes_A A^\Gamma$ is $\cP$ 
      for every cofinite subset $\Gamma \subseteq
      \Gamma_1$.
  \end{enumerate}
  In \cite{Mur}, the proof of $(\ref{axiom:gammaascent})$ for $F$-injectivity is
  incorrect as stated: the residue field of $B$ is not necessarily a finite
  extension of the residue field $k$ of $A$.
  The proof of the fact that $(\ref{axiom:gammaascent})$ holds for $F$-injectivity 
  is still a straightforward adaptation of the proof of \cite[Lem.\
  2.9$(b)$]{EH08}: it suffices to work with vector spaces over the residue field
  $l$ of $B$ instead of vector spaces over the residue field $k$ of $A$.
\end{remark}

Even though Theorem \ref{thm:finjlocusopen} shows that the $F$-injective locus of a ring which is essentially of finite type over an excellent local ring is open, 
we now use a construction of Hochster \cite{Hoc73} to show that
the $F$-injective locus of an arbitrary locally excellent Noetherian ring of prime characteristic is not necessarily open. Here by a \textsl{locally excellent} Noetherian ring $R$, we mean a ring such that for every prime ideal $\fp \in \Spec(R)$, the localization $R_\fp$ is excellent.

\begin{example}\label{ex:finjnotopen}
Let $(R,\fm)$ be the local ring of a closed
point on an affine variety over an algebraically closed field of characteristic
$p > 0$, such that $R$ is not $F$-injective.
For all $n \in \mathbf{N}$, let $R_n$ be a copy of $R$ with maximal ideal
$\fm_n = \fm$.
Let $R' \coloneqq \bigotimes_{n \in \mathbf N} R_n$, where the infinite tensor
product is taken over $k$, and consider the ring
\[
  T \coloneqq S^{-1}R',
\]
where $S = R' \smallsetminus (\bigcup_n \fm_nR')$.
Then, $T$ is a locally excellent Noetherian ring whose $F$-injective locus is
not open by applying \cite[Prop.\ 2]{Hoc73} when $\mathcal{P} =$ ``$F$-injective''.
  
\par For a specific example of a local ring $(R,\fm)$ of an affine variety that is not $F$-injective, let $k$ be an algebraically closed field of characteristic $2$ and let
\[
  R = k[x^2,xy,y]_{(x^2,xy,y)} \subseteq k[x,y]_{(x,y)}.
\]
Then, $R$ is not weakly normal \cite[Rem.\ 3.7$(iv)$]{Sch09}, hence also not $F$-injective by Corollary \ref{cor:finjwn}$(\ref{cor:finjwnitem})$.

\par Since strong $F$-regularity, $F$-rationality, and $F$-purity all imply
$F$-injectivity (Remark \ref{rem:implications}),
these loci in $\Spec(T)$ are also not open by applying \cite[Prop.\ 2]{Hoc73} to
these properties $\mathcal{P}$.
\end{example}

\subsection{\emph{F}-injectivity of graded rings}\label{subsect:finjgraded}
As a consequence of Theorems \ref{thm:finjascendscmfi} and
\ref{thm:finjlocusopen}, one can show that the irrelevant ideal of a Noetherian graded ring over a field controls the behavior of $F$-injectivity.

Recall that if $R = \bigoplus_{n=0}^\infty R_n$ is a graded ring such that $R_0 = k$ is a field, then $k^* \coloneqq k \smallsetminus \{0\}$ acts on $R$ as follows: any $c \in k^*$ induces a ring automorphism
\[
  \lambda_c\colon R \longrightarrow R,
\] where $\lambda_c$ maps a homogeneous element $t \in R$ of degree $n$ to $c^nt$. Moreover, if $k$ is infinite, then an ideal $I$ of $R$ is homogeneous if and only if $I$ is preserved under this action of $k^*$. In other words, $I$ is homogeneous if for all $c \in k^*$, we have $\lambda_c(I) = I$. With these preliminaries, we have the following result:

\begin{theorem}\label{thm:finjgradedrings}
Let $R = \bigoplus_{n=0}^\infty R_n$ be a Noetherian graded ring such that $R_0
= k$ is a field of characteristic $p > 0$. Let $\fm \coloneqq
\bigoplus_{n=1}^\infty R_n$ be the irrelevant ideal. Then, the following are equivalent:
\begin{enumerate}[label=$(\roman*)$,ref=\roman*]
\item $R$ is $F$-injective.\label{thm:finjgradedringsi}
\item For every homogeneous prime ideal $\fp$ of $R$, the localization $R_\fp$ is $F$-injective.
  \label{thm:finjgradedringsii}
\item $R_\fm$ is $F$-injective.\label{thm:finjgradedringsiii}
\end{enumerate}
\end{theorem}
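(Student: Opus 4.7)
The implications $(\ref{thm:finjgradedringsi}) \Rightarrow (\ref{thm:finjgradedringsii}) \Rightarrow (\ref{thm:finjgradedringsiii})$ are immediate: the first follows from the fact that $F$-injectivity localizes (Proposition \ref{prop:finjlocalizes}), and the second uses that the irrelevant ideal $\fm$ is itself a homogeneous prime. The substance of the theorem is therefore $(\ref{thm:finjgradedringsiii}) \Rightarrow (\ref{thm:finjgradedringsi})$, which I would establish in two steps: a reduction to the case of an infinite ground field, followed by an openness-plus-symmetry argument using the $k^*$-action on $\Spec(R)$.

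For the reduction, I would let $\overline{k}$ be an algebraic closure of $k$ and set $R' \coloneqq R \otimes_k \overline{k}$. Since $\overline{k}/k$ is separable algebraic, the base change map $R \to R'$ is regular, hence CMFI, and $R'$ is a Noetherian graded $\overline{k}$-algebra. Because the fiber $R'/\fm R' \simeq k \otimes_k \overline{k} = \overline{k}$ is already a field, the ideal $\fm' \coloneqq \fm R'$ is maximal in $R'$, and going-up shows it is the unique prime of $R'$ lying over $\fm$; consequently $R_\fm \otimes_k \overline{k}$ coincides with the local ring $R'_{\fm'}$. Applying Corollary \ref{cor:finjascendscmfinonlocal} to the CMFI map $R_\fm \to R'_{\fm'}$ shows that $R'_{\fm'}$ is $F$-injective. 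Granting the theorem over the infinite field $\overline{k}$, this promotes to $F$-injectivity of $R'$, and then faithfully flat descent (Theorem \ref{thm:descentF-injective}) transfers $F$-injectivity back down to $R$.

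For the main case when $k$ is infinite, $R$ is finitely generated over $R_0 = k$, and hence essentially of finite type over the field $k$. Theorem \ref{thm:finjlocusopen} therefore applies and the $F$-injective locus $U \subseteq \Spec(R)$ is open. Each $c \in k^*$ gives a ring automorphism $\lambda_c$ of $R$, inducing an isomorphism $R_\fp \xrightarrow{\sim} R_{\lambda_c(\fp)}$ for every prime $\fp$; consequently $U$ is stable under the $k^*$-action on $\Spec(R)$. Its closed complement $V(I)$ (with $I$ a radical ideal) is then also $k^*$-stable, and since $k$ is infinite a standard Vandermonde argument on the homogeneous components of any $f \in I$ shows that $I$ must be homogeneous. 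The hypothesis $\fm \in U$ says $I \not\subseteq \fm$, but any homogeneous ideal not contained in $\fm$ must contain a nonzero element of $R_0 = k$, i.e.\ a unit; so $I = R$ and $U = \Spec(R)$. The main point requiring care is the identification $R_\fm \otimes_k \overline{k} = R'_{\fm'}$, since a priori the tensor product is only semi-local; once this is in hand, the remaining ingredients are a straightforward application of Theorems \ref{thm:descentF-injective}, \ref{thm:finjascendscmfi}, and \ref{thm:finjlocusopen}.
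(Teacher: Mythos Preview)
Your approach is essentially the paper's: reduce to an infinite ground field, then combine openness of the $F$-injective locus (Theorem~\ref{thm:finjlocusopen}) with the $k^*$-action to force the defining ideal of the non-$F$-injective locus to be homogeneous and hence the unit ideal. The one slip is the assertion that $\overline{k}/k$ is separable algebraic: this is false whenever $k$ is imperfect (e.g.\ $k = \FF_p(t)$), and in that case $R \to R \otimes_k \overline{k}$ is neither regular nor CMFI, so neither Corollary~\ref{cor:finjascendscmfinonlocal} nor Theorem~\ref{thm:finjascendscmfi} applies. Your argument survives because the reduction is only invoked when $k$ is finite, and finite fields are perfect---but you should say this explicitly rather than make a claim that is wrong in general.

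The paper avoids the issue altogether by base-changing to $K = k(t)$ rather than to $\overline{k}$. The extension $k \subseteq k(t)$ is purely transcendental, hence separable, so $R \hookrightarrow R_K$ is regular regardless of whether $k$ is perfect; and since $R_K/\fm R_K \simeq K$ is a field, the expansion $\fm R_K$ is automatically the irrelevant maximal ideal of $R_K$, with no need to argue about uniqueness of primes lying over $\fm$. This makes the reduction uniform and slightly cleaner than the case split your version implicitly requires.
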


\begin{proof}
  $(\ref{thm:finjgradedringsi})\Rightarrow(\ref{thm:finjgradedringsii})$
  follows from Proposition \ref{prop:finjlocalizes}, while
  $(\ref{thm:finjgradedringsii})\Rightarrow(\ref{thm:finjgradedringsiii})$ is trivial. To complete the proof of the theorem, it suffices to show $(\ref{thm:finjgradedringsiii})\Rightarrow(\ref{thm:finjgradedringsi})$. 

Let $K \coloneqq k(t)$, where $t$ is an indeterminate, and let $R_K \coloneqq K \otimes_k R$. Since $k \hookrightarrow K$ is regular and faithfully flat, the inclusion
$R \hookrightarrow R_K$
is also regular and faithfully flat. Furthermore, because the irrelevant maximal ideal $\eta$ of $R_K$ is expanded from the irrelevant maximal ideal $\fm$ of $R$, it follows that $R_\fm \hookrightarrow (R_K)_\eta$ is faithfully flat with geometrically regular closed fiber. Thus, by Theorem \ref{thm:finjascendscmfi}, the ring $(R_K)_\eta$ is $F$-injective. If we can show that $R_K$ is $F$-injective, then $R$ will be $F$-injective by Theorem \ref{thm:descentF-injective}.

Theorem \ref{thm:finjlocusopen} shows that the $F$-injective locus of $R_K$ is open because $R_K$ is of finite type over $K$. Let $I$ be the radical ideal defining the complement of the $F$-injective locus. Since the non-$F$-injective locus is preserved under automorphisms of $R$, we see that $I$ is stable under the action of $K^*$.
It follows that $I$ is a homogeneous ideal since $K$ is infinite. Since $\eta$ is in the complement of the closed set defined by $I$, this forces $I$ to equal $R_K$. Thus, $R_K$ is $F$-injective, completing the proof of the Theorem.
\end{proof}

\section{Singularities of generic projection hypersurfaces}\label{sect:doherty}
Bombieri \cite[p.\ 209]{Bom73} and Andreotti--Holm \cite[p.\ 91]{AH77}
asked whether the image of a
smooth projective variety of dimension $r$ under a generic projection to
$\PP^{r+1}_k$ is weakly normal.
As an application of our results on $F$-injectivity, we show that in low
dimensions, we have the following stronger result:

\begin{customthm}{C}\label{thm:dohertyanalogue}
Let $Y \subseteq \PP^n_k$ be a smooth projective variety of dimension $r \le 5$ over an algebraically closed field $k$ of characteristic $p > r$, such that $Y$ is embedded via the $d$-uple embedding with $d \ge 3r$. If $\pi\colon Y \to \PP^{r+1}_k$ is a generic projection and $X = \pi(Y)$, then $X$ is $F$-pure, and hence $F$-injective.
\end{customthm}

\noindent Here when we say $X$ is \textsl{$F$-pure}, we mean that all local
rings of $X$ are $F$-pure (and not that $X$ is globally Frobenius split).
Theorem \ref{thm:dohertyanalogue} is the positive characteristic analogue of a
theorem of Doherty \cite[Main Thm.]{Doh08}, who proved that over the complex
numbers, the image $X$ of the generic projection has semi-log canonical
singularities.\medskip
\par The proof of Theorem \ref{thm:dohertyanalogue} largely follows that of
\cite[Main Thm.]{Doh08}, which checks that the singularities on generic
projections are Du Bois after reduction modulo $p$ using Fedder's criterion
\cite[Prop.\ 2.1]{Fed83}.
Compared to Doherty's proof, the main differences are that we must
be careful about what possibilities occur in Roberts's classification
\cite[(13.2)]{Rob75} for fixed prime characteristics, and we also cannot use
Doherty's characteristic zero proof that the pinch point is Du Bois in
$(\hyperlink{case:roberts1a}{1a})$.
We also need to use Corollary \ref{cor:tensor-products-Finj} instead of
\cite[Thm.\ 3.9]{Doh08}.
\par We will use the following result, the characteristic zero analogue of which was
implicit in the proof of \cite[Main Thm.]{Doh08}.
\begin{lemma}[cf.\ {\cite[Prop.\ 4.8]{Sch09}}]\label{lem:sch09prop48}
  Let $X$ be a reduced locally Noetherian scheme of prime characteristic $p >
  0$.
  Suppose that $X$ can be written as the union of two closed Cohen--Macaulay
  subschemes $Y_1$ and $Y_2$ of the same dimension.
  If $Y_1$, $Y_2$, and $Y_1 \cap Y_2$ are all $F$-injective, then $X$ is
  $F$-injective.
\end{lemma}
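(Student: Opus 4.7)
The plan is to mimic the proof of the $F$-finite version \cite[Prop.\ 4.8]{Sch09} using Mayer--Vietoris and local cohomology, now freed from any $F$-finite hypothesis by the results already established. Since $F$-injectivity is stalk-local by Corollary \ref{cor:globalfinj}, I may fix a point $x \in X$ and reduce to showing that the local ring $R \coloneqq \cO_{X,x}$ is $F$-injective. Writing $I_j \subseteq R$ for the ideal defining $Y_j$ at $x$, if $x \notin Y_2$ then $I_2 = R$ and $R \simeq \cO_{Y_1,x}$, so the conclusion is immediate from the hypothesis; hence I may assume $x \in Y_1 \cap Y_2$. At such a point both $\cO_{Y_j,x}$ are Cohen--Macaulay of dimension $d$ (the common dimension of $Y_1$ and $Y_2$), and $\dim R = d$ as well, since every irreducible component of $X$ through $x$ sits inside one of the $Y_j$.

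The reducedness of $X$ together with $Y_1 \cup Y_2 = X$ forces $I_1 \cap I_2 = 0$, giving the Mayer--Vietoris short exact sequence
\[
  0 \longrightarrow R \longrightarrow R/I_1 \oplus R/I_2 \longrightarrow R/(I_1+I_2) \longrightarrow 0,
\]
which is manifestly compatible with Frobenius via $r \mapsto r^p$ on each term. Applying $H^i_\fm(-)$ then produces a commutative ladder of long exact sequences whose vertical maps are the Frobenius actions on the relevant local cohomology modules, and this ladder will be the sole computational device used in the rest of the argument.

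A degree-by-degree analysis finishes the proof. Grothendieck vanishing gives $H^i_\fm(R) = 0$ for $i > d$, while Cohen--Macaulayness forces $H^i_\fm(R/I_j) = 0$ for $i \ne d$. For $i < d$ the long exact sequence collapses to a Frobenius-equivariant isomorphism $H^{i-1}_\fm(R/(I_1+I_2)) \xrightarrow{\sim} H^i_\fm(R)$, and $F$-injectivity of $R/(I_1+I_2)$ transfers directly to injectivity of $H^i_\fm(F_R)$. For $i = d$ the ladder yields
\[
  0 \longrightarrow H^{d-1}_\fm\bigl(R/(I_1+I_2)\bigr) \xrightarrow{\partial} H^d_\fm(R) \longrightarrow H^d_\fm(R/I_1) \oplus H^d_\fm(R/I_2),
\]
and a short diagram chase does the job: any $\xi \in \ker H^d_\fm(F_R)$ must have image zero in each $H^d_\fm(R/I_j)$ by $F$-injectivity of the latter, hence lifts via $\partial$ to some $\eta \in H^{d-1}_\fm(R/(I_1+I_2))$; naturality together with the injectivity of the connecting map in the Frobenius-twisted sequence then forces the Frobenius image of $\eta$ to vanish, and $F$-injectivity of $R/(I_1+I_2)$ gives $\eta = 0$ and hence $\xi = 0$. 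The most delicate point is verifying Frobenius compatibility of the Mayer--Vietoris ladder carefully enough to justify the chase at $i = d$; once that is in hand, everything else is routine vanishing and bookkeeping.
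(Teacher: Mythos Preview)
Your proposal is correct and follows exactly the approach the paper indicates: the paper does not give a detailed proof but simply notes that Schwede's Mayer--Vietoris argument goes through verbatim once Corollary \ref{cor:globalfinj} is available to reduce to the stalk-local case, and that is precisely what you have written out. One minor imprecision: your $d$ should be the common \emph{local} dimension $\dim \cO_{Y_1,x} = \dim \cO_{Y_2,x}$ at the point $x$, not the global dimension of the $Y_j$; this is what the vanishing arguments actually use, and in the applications (complete local hypersurfaces) there is no distinction.
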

Schwede proves Lemma \ref{lem:sch09prop48} under the additional
hypothesis that $X$ is $F$-finite.
While we will only need the $F$-finite case in the sequel, the
proof in \cite{Sch09} applies in the non-$F$-finite case as well, 
once we know that Corollary \ref{cor:globalfinj} holds.

\begin{proof}[Proof of Theorem \ref{thm:dohertyanalogue}]
  By \cite[Thm.\ 1.1]{RZN84}, the generic projection $\pi\colon Y \to X$ is finite
  and birational.
  Thus, $X$ is a hypersurface, and hence also Gorenstein. It therefore suffices to show that $X$ has $F$-injective singularities by \cite[Lem.\ 3.3]{Fed83} (see also Remark \ref{rem:implications}).
  
\par Since $F$-injectivity is unaffected under taking completions \cite[Rem.\ on p.\ 473]{Fed83}, it suffices to show that for every closed point $x \in X$, the completion $\widehat{\cO}_{X,x}$ of the local ring at $x$ is $F$-injective. In \cite[(13.2)]{Rob75}, Roberts provides a list of possibilities for the isomorphism classes of these complete local rings, which we treat individually.
We label each case according to the notation in \cite{Rob75}.
\begin{enumerate}
    \item[$(0)$]\hypertarget{case:roberts0}
      The variety $X$ could analytically locally have simple normal crossings at $x$, i.e.,
      \[
        \widehat{\cO}_{X,x} \simeq \frac{k\llbracket z_1,z_2,\ldots,z_{r+1}
        \rrbracket}{(z_1z_2\cdots z_d)}
      \]
      for some $d \in \{1,2,\ldots,r+1\}$.
      This ring is $F$-pure by Fedder's criterion
      (see \cite[p.\ 2411]{Doh08}).

    \item[$(1a)$]\hypertarget{case:roberts1a}
      When $r \ge 2$ and using the assumption that $p \ne 2$,
      the variety $X$ could have a pinch point at $x$, i.e.,
      \[
        \widehat{\cO}_{X,x} \simeq \frac{k\llbracket z_1,z_2,\ldots,z_{r+1}
        \rrbracket}{(z_r^2-z_1^2z_{r+1})}.
      \]
      This ring is $F$-pure by Fedder's criterion (see \cite[p.\ 160]{BS13p-1}).

   \item[$(1b)$]\hypertarget{case:roberts1b}
      When $r \ge 4$ and using the assumption that $p \ne 3$, we
      could have
      \[
        \widehat{\cO}_{X,x} \simeq \frac{k\llbracket z_1,z_2,\ldots,z_{r+1}
        \rrbracket}{(z_r^3+\Phi_4+\Phi_5)}
      \]
      where
      \begin{align*}
        \Phi_4(z_1,z_2,z_3,z_r,z_{r+1}) &= z_1^2z_3z_r - z_1^3z_{r+1} +
        2z_2z_3z_r^2 - 3z_1z_2z_rz_{r+1},\\
        \Phi_5(z_1,z_2,z_3,z_r,z_{r+1}) &= z_1^2z_3^2z_r - z_1z_2^2z_3z_{r+1} -
        z_2^3z_{r+1}^2.
      \end{align*}
      This ring is $F$-pure by Fedder's criterion (see \cite[p.\ 2412]{Doh08}).
  \end{enumerate}
  The remaining cases are analytically locally unions of two hypersurfaces.
  By Lemma \ref{lem:sch09prop48}, it suffices to show that the hypersurfaces and
  their intersections are $F$-injective.
  \begin{enumerate}
   \item[$(2a)$]\hypertarget{case:roberts2a}
      When $r \ge 3$ and using the assumption that $p \ne 2$, the variety $X$
      could analytically locally be the union of a hyperplane $Y_1$ and a pinch
      point $Y_2$ at $x$, i.e.,
      \[
        \widehat{\cO}_{X,x} \simeq \frac{k\llbracket z_1,z_2,\ldots,z_{r+1}
        \rrbracket}{z_1(z_r^2-z_2^2z_{r+1})}.
      \]
      The hyperplane $Y_1$ is regular, hence $F$-injective, and both $Y_2$ and the
      intersection $Y_1 \cap Y_2$ are pinch points, hence are $F$-injective by
      $(\hyperlink{case:roberts1a}{1a})$.

    \item[$(2b)$] When $r = 5$ and using the assumption that $p \ne 3$, the
      variety $X$ could analytically locally be the union of a hyperplane $Y_1$
      and a hypersurface $Y_2$ of the form in $(\hyperlink{case:roberts1b}{1b})$,
      i.e.,
      \[
        \widehat{\cO}_{X,x} \simeq \frac{k\llbracket z_1,z_2,\ldots,z_{r+1}
        \rrbracket}{z_1(z_r^3+\Psi_4+\Psi_5)}
      \]
      where with notation as in $(\hyperlink{case:roberts1b}{1b})$,
      we set $\Psi_i = \Phi_i(z_2,z_3,z_4,z_r,z_{r+1})$ for $i \in
      \{4,5\}$.
      The hyperplane $Y_1$ is regular, hence $F$-injective, and both $Y_2$ and
      $Y_1 \cap Y_2$ are of the form in $(\hyperlink{case:roberts1b}{1b})$,
      hence are $F$-injective.

    \item[$(2c)$]\hypertarget{case:roberts2c}
      When $r = 5$ and using the assumption that $p \ne 2$, the
      variety $X$ could analytically locally be the union of two pinch points
      $Y_1$ and $Y_2$ at $x$, i.e.,
      \[
        \widehat{\cO}_{X,x} \simeq \frac{k\llbracket z_1,z_2,\ldots,z_{r+1}
        \rrbracket}{(z_r^2-z_1^2z_{r+1})(z_{r-2}^2-z_2^2z_{r-1})}.
      \]
      Both $Y_1$ and $Y_2$ are $F$-injective by
      $(\hyperlink{case:roberts1a}{1a})$.
      To show that $Y_1 \cap Y_2$ is $F$-injective, it suffices to show that the local
      ring
      \[
        \left(
          \frac{k[z_1,z_r,z_{r+1}]_{(z_1,z_r,z_{r+1})}}{(z_r^2-z_1^2z_{r+1})} \otimes_k
        \frac{k[z_2,z_3,\ldots,z_{r-2},z_{r-1}]_{(z_2,z_3,\ldots,z_{r-2},z_{r-1})}}{(z_{r-2}^2-z_2^2z_{r-1})}
      \right)_{(z_1,z_2,\ldots,z_{r+1})}
      \]
      is $F$-injective, since $F$-injectivity is unaffected by completion
      \cite[Rem.\ on p.\ 473]{Fed83}.
      Each factor of the tensor product is CMFI by the fact 
      that their completions are $F$-injective by using $(\hyperlink{case:roberts1a}{1a})$. 
      Thus, the tensor product is $F$-injective by Corollary \ref{cor:tensor-products-Finj}. Finally, 
      the further localization of the tensor product is $F$-injective since $F$-injectivity
      localizes by Proposition \ref{prop:finjlocalizes}. 
    
    \item[$(3)$]\hypertarget{case:roberts3} When $r \ge 4$ and using the
      assumption that $p \ne 2$, the variety $X$ could analytically locally be
      the union of a simple normal crossing divisor $Y_1$ and a pinch point
      $Y_2$ at $x$, i.e.,
      \[
        \widehat{\cO}_{X,x} \simeq \frac{k\llbracket z_1,z_2,\ldots,z_{r+1}
        \rrbracket}{z_1z_2(z_r^2-z_3^2z_{r+1})}.
      \]
      The hypersufaces $Y_1$ and $Y_2$ are $F$-injective by $(\hyperlink{case:roberts0}{0})$
      and $(\hyperlink{case:roberts1a}{1a})$, respectively.
      As in $(\hyperlink{case:roberts2c}{2c})$, to show that $Y_1 \cap Y_2$ is
      $F$-injective, it suffices to show that the local ring
      \[
        \left(\frac{k[z_1,z_2]_{(z_1,z_2)}}{(z_1z_2)} \otimes_k
        \frac{k[z_3,z_4,\ldots,z_{r+1}]_{(z_3,z_4,\ldots,z_{r+1})}}{(z_r^2-z_3^2z_{r+1})}
        \right)_{(z_1,z_2,z_3,z_4,\ldots,z_{r+1})}
      \]
      is $F$-injective.
      The inner tensor product ring is $F$-injective by Corollary \ref{cor:tensor-products-Finj}
      since each factor is CMFI by the fact that the completions of these
      factors are CMFI via
      $(\hyperlink{case:roberts0}{0})$ and $(\hyperlink{case:roberts1a}{1a})$,
      respectively. Finally, as in $(\hyperlink{case:roberts2c}{2c})$, a 
      further localization does not affect $F$-injectivity.

    \item[$(4)$] When $r = 5$ and using the assumption that $p \ne 2$, the
      variety $X$ could analytically locally be the union of a simple normal
      crossing divisor $Y_1$ and a pinch point $Y_2$ at $x$, i.e.,
      \[
        \widehat{\cO}_{X,x} \simeq \frac{k\llbracket z_1,z_2,\ldots,z_{r+1}
        \rrbracket}{z_1z_2z_3(z_r^2-z_4^2z_{r+1})}.
      \]
      For this, the same reasoning as in
      $(\hyperlink{case:roberts3}{3})$ applies.
      \qedhere
  \end{enumerate}
\end{proof}

On the other hand, we can adapt an example of Doherty \cite[Cor.\ 4.7 and Ex.\ 4.8]{Doh08} to show that generic projections of large dimension cannot be $F$-pure.

\begin{proposition}[{cf.\ \cite[Cor.\ 4.7]{Doh08}}]\label{prop:doh47}
Let $Y \subseteq \PP^n_k$ be a smooth projective variety of dimension $30$ over an algebraically closed field of characteristic $p > 0$ such that $\Omega_Y^1$ is nef. If $\pi\colon Y \to \PP^{31}_k$ is a generic projection and $X = \pi(Y)$, then $X$ is not $F$-pure.
\end{proposition}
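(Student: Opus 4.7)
The plan is to adapt Doherty's complex-analytic argument in \cite[Cor.\ 4.7]{Doh08} to positive characteristic. The strategy is to use the multiple-point formulas for generic projections to produce a singularity of sufficiently high multiplicity on $X$, and then apply Fedder's criterion \cite[Prop.\ 2.1]{Fed83} to rule out $F$-purity at that point.

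First, I would invoke multiple-point formulas in the style of Kleiman and Piene for $\pi\colon Y \to \PP^{r+1}_k$. The class in the Chow ring of $Y$ of the preimage of the $m$-fold point locus of $X$ can be written as a polynomial in Chern classes of $\Omega_Y^1$ and of $\cO_Y(1)$. The nefness of $\Omega_Y^1$ makes the relevant intersection numbers non-negative and, for $r = \dim Y = 30$, non-vanishing for some $m \geq r+2 = 32$. This forces the existence of a point $x \in X$ at which at least $m$ analytic branches of $\pi$ come together. Combined with Roberts' classification \cite[(13.2)]{Rob75}, this yields $\widehat{\cO}_{X,x} \simeq k\llbracket z_1,\ldots,z_{r+1} \rrbracket / (f)$ with $f \in \fm^m$ for $\fm = (z_1,\ldots,z_{r+1})$ and some $m \geq r+2$.

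Second, I would deduce non-$F$-purity by Fedder's criterion. Since $f \in \fm^m$ with $m \geq r+2$, every monomial appearing in the expansion of $f^{p-1}$ has total degree at least $m(p-1) \geq (r+2)(p-1) > (r+1)(p-1)$, so by the pigeonhole principle at least one variable $z_i$ appears with exponent at least $p$. Hence $f^{p-1} \in \fm^{[p]}$, so $\widehat{\cO}_{X,x}$ is not $F$-pure by \cite[Prop.\ 2.1]{Fed83}. Since $\cO_{X,x} \to \widehat{\cO}_{X,x}$ is faithfully flat, descent of $F$-purity along faithfully flat homomorphisms \cite[Prop.\ 5.13]{HR76} implies that $\cO_{X,x}$ is not $F$-pure, and therefore neither is $X$.

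The main obstacle is the first step: extracting from the hypotheses $\dim Y = 30$ and $\Omega_Y^1$ nef a concrete multiplicity bound $m \geq 32$ at some singular point of $X$. This is essentially Doherty's Chern-class computation in \cite[\S 4]{Doh08}, translated into positive characteristic; the nefness hypothesis plays the role of the positivity assumptions needed to guarantee non-vanishing of the multiple-point class. If the direct translation only yields a weaker multiplicity, one would instead have to extract the specific Roberts normal form of the singularity produced and analyze the multinomial expansion of $f^{p-1}$ to locate all monomials that escape $\fm^{[p]}$. The remaining degree-counting argument is uniform in $p$ and presents no further difficulty once a singularity of multiplicity $\geq r+2$ is in hand.
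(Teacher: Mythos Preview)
Your approach matches the paper's: exhibit a closed point of multiplicity $\ge 32$ via Doherty's computation \cite[Cor.\ 4.7]{Doh08}, then apply Fedder's criterion with the pigeonhole bound $\fm^{31(p-1)+1} \subseteq \fm^{[p]}$ (the paper cites \cite[Lem.\ 2.4$(a)$]{HH02} for this inclusion). One correction: your invocation of Roberts' classification \cite[(13.2)]{Rob75} is misplaced, since that list only covers $r \le 5$; you do not need it anyway, because $X$ is already a hypersurface in $\PP^{31}_k$ by Theorem~\ref{thm:rzn}, so a local defining equation $f$ exists automatically, and the multiplicity bound alone gives $f \in \fm_x^{32}$.
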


\begin{proof}
  The proof of \cite[Cor.\ 4.7]{Doh08} shows that in this setting, there are
  closed points $x \in X$ with Hilbert--Samuel multiplicity at least $2^5 = 32$.
  Now let $f \in k[x_1,x_2,\ldots,x_{31}]$ be a local defining equation of $X$
  at $x$, and let $\fm_x \subseteq k[x_1,x_2,\ldots,x_{31}]$ be the maximal
  ideal defining $x \in X$.
  By the multiplicity condition, we have $f \in \fm_x^{32}$.
  Thus, we have
  \[
    f^{p-1} \in \fm_x^{32(p-1)} \subseteq \fm_x^{31(p-1)+1} \subseteq
    \fm_x^{[p]},
  \]
  where the second inclusion
  follows from the pigeonhole principle \cite[Lem.\ 2.4$(a)$]{HH02}.
  Fedder's criterion \cite[Prop.\ 2.1]{Fed83} therefore implies that $X$ is not
  $F$-pure at $x$.
\end{proof}
\begin{remark}
  The calculation in \cite[Thm.\ 4.5]{Doh08} implies that the hypersurface $X$
  in Proposition \ref{prop:doh47} is also not semi-log canonical.
\end{remark}
Abelian varieties give examples of smooth projective varieties satisfying the
hypothesis of Proposition \ref{prop:doh47}, since they have nef cotangent
bundles.
We give some other examples of such varieties.
\begin{example}\label{ex:dohertyhighdim}
  Over the complex numbers, there are many explicit examples of smooth
  projective varieties $Y$ with ample cotangent bundle $\Omega^1_Y$ (see
  \cite[Constrs.\ 6.3.36--6.3.39]{Laz04b}).
  In positive characteristic, we know of two methods to produce such examples.
  \begin{enumerate}[label=$(\alph*)$]
    \item Fix an integer $r > 0$ and an integer $n \ge 3r-1$.
      Following the proof of \cite[Constr.\ 6.3.42]{Laz04b}, general
      complete intersections of $(n-r)$ hypersurfaces of sufficiently large
      degree in a product of $n$ curves of genus $\ge 2$ are $r$-dimensional
      smooth projective varieties with ample cotangent bundles.
    \item Fix an integer $r > 0$ and an integer $n \ge 2r$.
      By Xie's proof of Debarre's ampleness conjecture \cite[Thm.\
      1.2]{Xie18}, general complete intersections $X \subseteq \PP^n_k$ of
      $(n-r)$ hypersurfaces of sufficiently large degree are $r$-dimensional
      smooth projective varieties with ample contangent bundles.
  \end{enumerate}
  By Proposition \ref{prop:doh47}, applying either construction when $r = 30$
  yields examples of smooth projective varieties whose generic projections are
  not $F$-pure.
\end{example}

\appendix
\section{\normalfont{\emph{F}}-rationality descends and implies \normalfont{\emph{F}}-injectivity}
We address the relationship between $F$-rationality and $F$-injectivity and the descent property (\ref{property:descent}) for $F$-rationality.
We start by defining $F$-rationality.
\begin{citeddef}[{\citeleft\citen{FW89}\citemid Def.\ 1.10\citepunct
  \citen{HH90}\citemid Def.\ 2.1\citepunct
  \citen{HH94}\citemid Def.\ 4.1\citeright}]\label{def:frational}
  Let $R$ be a Noetherian ring. A sequence of elements $x_1,x_2,\ldots,x_n \in R$ is a \textsl{sequence of parameters} if, for every prime ideal $\fp \subseteq R$ containing $(x_1,x_2,\ldots,x_n)$, the images of $x_1,x_2,\ldots,x_n$ in $R_\fp$ are part of a system of parameters in $R_\fp$. An ideal $I \subseteq R$ is a \textsl{parameter ideal} if $I$ can be generated by a sequence of parameters in $R$.
  \par Now let $R$ be a Noetherian ring of prime characteristic $p > 0$. We say that $R$ is \textsl{$F$-rational} if every parameter ideal in $R$ is tightly closed in the sense of \cite[Def.\ 3.1]{HH90}.
\end{citeddef}

We now prove the following lemma, which allows us to spread out a system of parameters in a localization to a sequence of parameters in the whole ring.

\begin{lemma}[cf.\ {\cite[Proof of Prop.\ 6.9]{QS17}}]\label{lem:liftsop}
  Let $R$ be a Noetherian ring, and consider a prime ideal $\fp \subseteq R$
  of height $t$.
  Let $I \subseteq R_\fp$ be an ideal generated by a system of parameters.
  Then, there exists a sequence of parameters $x_1,x_2,\ldots,x_t \in \fp$ such
  that $I = (x_1,x_2,\ldots,x_t)R_\fp$.
\end{lemma}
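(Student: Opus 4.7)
The plan is to prove the lemma by induction on $i$, constructing $x_i \in \fp$ at each step via a prime-avoidance argument in the quotient $\bar R \coloneqq R/(x_1,\dots,x_{i-1})$. First, I would fix a system of parameters $u_1,\dots,u_t$ of $R_\fp$ generating $I$ and clear denominators by a common $s \in R \smallsetminus \fp$ to obtain elements $y_1,\dots,y_t \in \fp$ with $(y_1,\dots,y_t) R_\fp = I$.

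For the inductive step, suppose $x_1,\dots,x_{i-1}$ have been built so that they form part of a system of parameters of $R_\fp$, form a sequence of parameters in $R$, and generate $(u_1,\dots,u_{i-1}) R_\fp$ after localizing at $\fp$. Write $\bar R = R/(x_1,\dots,x_{i-1})$ and $\bar\fp = \fp\bar R$, which has height $t-i+1 \ge 1$ in $\bar R$. Any lift $y_i \in \fp$ of $u_i$ has the property that its image $\bar y_i \in \bar R$ avoids every minimal prime of $\bar R$ contained in $\bar\fp$, since these correspond to minimal primes of $\bar R_{\bar\fp}$ and $u_i$ is a parameter there. The remaining obstruction is that $\bar y_i$ may lie inside some minimal prime $\fq$ of $\bar R$ with $\fq \not\subseteq \bar\fp$; I would correct $y_i$ to $x_i \coloneqq y_i + z_i$ with $z_i \in \fp$ whose image $\bar z_i$ in $\bar R$ lies in $\bar\fp \cap \bar K$, where $\bar K \coloneqq \ker(\bar R \to \bar R_{\bar\fp})$. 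Since $\bar z_i$ dies after localizing at $\bar\fp$, this modification preserves the ideal after localization, keeping $(x_1,\dots,x_{i-1},x_i) R_\fp = (u_1,\dots,u_i) R_\fp$. A coset-variant of prime avoidance then allows me to choose $\bar z_i$ avoiding each minimal prime $\fq \not\subseteq \bar\fp$ with $\bar y_i \in \fq$, and the translated coset $-\bar y_i + \fq$ when $\bar y_i \notin \fq$.

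The main technical obstacle will be verifying that $\bar\fp \cap \bar K \not\subseteq \fq$ for every minimal prime $\fq$ of $\bar R$ with $\fq \not\subseteq \bar\fp$, which is what makes the prime-avoidance step go through. For this I would pick $\bar s \in \fq \smallsetminus \bar\fp$; since $\fq$ is minimal, $\bar R_\fq$ is zero-dimensional and $\bar s$ is nilpotent there, yielding some $\bar u \notin \fq$ with $\bar s^n \bar u = 0$ in $\bar R$, which places $\bar u$ in $\bar K$. Because $\bar\fp \not\subseteq \fq$ (otherwise $\fq$ would properly contain the positive-height prime $\bar\fp$, contradicting minimality of $\fq$), there is also $\bar v \in \bar\fp \smallsetminus \fq$, and the product $\bar u \bar v$ furnishes the required element of $\bar\fp \cap \bar K$ outside $\fq$. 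Lifting $\bar x_i$ to $x_i \in \fp$ completes the inductive step, and iterating through $i = t$ produces the sequence of parameters with $(x_1,\dots,x_t) R_\fp = I$.
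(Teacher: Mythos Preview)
Your argument is correct and follows the same overall strategy as the paper: lift generators of $I$ to $\fp$, then inductively correct them using Davis's coset prime avoidance so that each $x_i$ avoids the minimal primes over $(x_1,\dots,x_{i-1})$. The difference is in how the correction terms are chosen and how the equality $(x_1,\dots,x_t)R_\fp = I$ is ensured. The paper takes the $\fp$-primary component $J$ of $(a_1,\dots,a_t)$ and corrects by elements of $J^2$; the height bound $\height((a_i)+J^2) = t$ feeds Davis uniformly at every step, and at the end Nakayama's lemma (from $I = (x_1,\dots,x_t)R_\fp + I^2$) gives the desired equality in one stroke. Your route instead works in the quotient $\bar R$, corrects by elements of $\bar\fp \cap \bar K$ (so the localized ideal is preserved at each step rather than recovered at the end), and verifies the prime-avoidance hypothesis via the explicit construction $\bar u\bar v \in (\bar\fp \cap \bar K)\smallsetminus\fq$. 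Both methods work; the paper's is more uniform (one ideal $J^2$ throughout, one Nakayama at the end), while yours is more direct about why the localized ideal stays fixed but needs the kernel-analysis lemma at each step.
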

\begin{proof}
  Note that when $\height \fp = t = 0$, we do not have anything to prove. So assume
  $t \geq 1$.
  Write $I = (a_1,a_2,\ldots,a_t)R_\fp$, where $t$ is the height of $\fp$ and
  $a_i \in R$ for every $i$.
  Note that $\fp$ is minimal over $(a_1,a_2,\ldots,a_t)$.
  Let $J$ be the $\fp$-primary component of $(a_1,a_2,\ldots,a_t)$ in $R$.
  Then, we have $I = JR_\fp$ and $\height J = t$.
  \par We now claim that there exist elements $b_1,b_2,\ldots,b_t \in J^2$ such
  that setting $x_i = a_i + b_i$, the sequence $x_1,x_2,\ldots,x_t$ is a
  sequence of parameters in $R$. Note that for each $i$, the ideal $(a_i) + J^2$ has
  height $t$ because it is sandwiched between the two ideals $J^2$ and $J$, both
  of which have
  height t.
  For $i = 1$, we have
  \begin{align*}
    (a_1) + J^2 \not\subseteq{}& \bigcup_{\fq \in \Min_R(R)} \fq
    \intertext{by prime avoidance.
    Thus, by a theorem of Davis \cite[Thm.\ 124]{Kap74}, there exists
    $b_1 \in J^2$ such that}
    x_1 \coloneqq a_1 + b_1 \notin{}& \bigcup_{\fq \in \Min_R(R)} \fq.
  \end{align*}
  For every $1 < i \le t$, the same method implies there exist $b_i \in J^2$
  such that
  \[
    x_i \coloneqq a_i + b_i \notin \bigcup_{\Min_R(R/(x_1,x_2,\ldots,x_{i-1}))}
    \fq.
  \]
  By construction, $x_1,x_2,\ldots,x_t$ generates an ideal of height $t$ in $R$,
  and therefore it is a sequence of parameters by \cite[\S2.4, Prop.\ 13]{Bos13},
  since the images of $x_1,x_2,\ldots,x_t$ continue to generate an ideal of height $t$ 
  in $R_\fp$ for any prime $\fp$ that contains these elements.
  Since $(x_1,x_2,\ldots,x_t)R_\fp \subseteq I$ and $I =
  (x_1,x_2,\ldots,x_t)R_\fp + I^2$, Nakayama's lemma implies $I =
  (x_1,x_2,\ldots,x_t)R_\fp$ (see \cite[Cor.\ to Thm.\ 2.2]{Mat89}).
\end{proof}

The following result shows that $F$-rational rings are $F$-injective and are even CMFI under mild assumptions.

\begin{proposition}
\label{prop:frational}
Let $R$ be an $F$-rational Noetherian ring of prime characteristic $p > 0$. We then have the following:
\begin{enumerate}[label=$(\roman*)$,ref=\roman*] 
    \item If $R$ is the homomorphic image of a Cohen--Macaulay ring or is
      locally excellent, then $R_\fm$ is $F$-rational for every
      maximal ideal $\fm \subseteq R$, and $R$ is Cohen--Macaulay.\label{prop:frationalcm}
    
        \item $R$ is $F$-injective.\label{prop:frationalfinj}
\end{enumerate}
\end{proposition}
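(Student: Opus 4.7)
The strategy for $(\ref{prop:frationalcm})$ is to directly invoke the cited references. Under either hypothesis --- that $R$ is a homomorphic image of a Cohen--Macaulay ring, or that $R$ is locally excellent --- the results \cite[Thm.\ 4.2]{HH94} and \cite[Prop.\ 0.10]{Vel95} combine to show that $R$ is Cohen--Macaulay and that $F$-rationality descends to every localization $R_\fm$ at a maximal ideal. No new argument is needed here.

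For $(\ref{prop:frationalfinj})$, the plan is to verify $F$-injectivity at each maximal ideal $\fm \subseteq R$ by means of the implication $(\ref{lem:cmfiallparam}) \Rightarrow (\ref{lem:cmfiparamfinj})$ of Lemma \ref{lem:cmfiparam}, which, crucially, does not require $R_\fm$ to be Cohen--Macaulay. Accordingly, it suffices to prove that every ideal $I$ of $R_\fm$ generated by a system of parameters is Frobenius closed. Given such $I$, Lemma \ref{lem:liftsop} produces a sequence of parameters $x_1,\ldots,x_t$ in $R$ with $I = (x_1,\ldots,x_t)R_\fm$; set $J = (x_1,\ldots,x_t) \subseteq R$. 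Since $R$ is $F$-rational, the parameter ideal $J$ is tightly closed, and in particular Frobenius closed because the inclusion $J^F \subseteq J^*$ always holds (if $x^{p^{e_0}} \in J^{[p^{e_0}]}$ then raising to further Frobenius powers gives $x^{p^e} \in J^{[p^e]}$ for all $e \ge e_0$, so $x \in J^*$ with test element $1$).

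The key computation is to then transfer Frobenius closure of $J$ in $R$ to Frobenius closure of $I = JR_\fm$ in $R_\fm$. Given $y = x/s \in R_\fm$ with $y^{p^e} \in I^{[p^e]}$, clearing denominators yields some $u \notin \fm$ with $u \cdot x^{p^e} \in J^{[p^e]}$, and then the manipulation $(ux)^{p^e} = u^{p^e-1} \cdot (u x^{p^e}) \in J^{[p^e]}$ shows $ux \in J^F = J$, whence $y = (ux)/(us) \in JR_\fm = I$. The main obstacle is that Frobenius closure is not a priori compatible with localization: from $y^{p^e} \in I^{[p^e]}$ one only extracts $ux^{p^e} \in J^{[p^e]}$ rather than $x^{p^e} \in J^{[p^e]}$. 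The device above, of absorbing the auxiliary factor $u$ into a $p^e$-th power using $u^{p^e-1}$, sidesteps this difficulty and is what allows $F$-rationality of $R$ to propagate to the Frobenius closedness of parameter ideals in $R_\fm$ needed to apply Lemma \ref{lem:cmfiparam}.
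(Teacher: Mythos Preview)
Your proof is correct and follows essentially the same route as the paper. For $(\ref{prop:frationalfinj})$, the paper proceeds identically---lift a system of parameters via Lemma~\ref{lem:liftsop}, note that tightly closed implies Frobenius closed, and pass to the localization---except that where you give the direct $u^{p^e-1}$ computation, the paper simply cites \cite[Lem.~3.3]{QS17} for the fact that Frobenius closure commutes with localization (your computation is in effect the proof of that lemma). For $(\ref{prop:frationalcm})$, the paper agrees that the homomorphic-image-of-Cohen--Macaulay case is already in \cite{HH94}, but it spells out the locally excellent case rather than deferring to \cite{Vel95}: reduce to a domain via normality, use Kawasaki \cite{Kaw02} to realize each $R_\fm$ as a quotient of a Cohen--Macaulay ring, lift a system of parameters via Lemma~\ref{lem:liftsop}, and invoke \cite{AHH93,Smi94} to conclude that tight closure of parameter ideals commutes with localization in this setting.
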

\begin{proof}
  $(\ref{prop:frationalcm})$ is shown in \citeleft\citen{HH94}\citemid Thm.\
  4.2\citepunct \citen{Vel95}\citemid Prop.\ 0.10\citeright.
  We now show $(\ref{prop:frationalfinj})$.
  By Lemma \ref{lem:cmfiparam}, it suffices to show that every ideal $I
  \subseteq R_\fm$ generated by a system of parameters in $R_\fm$ is Frobenius
  closed.
  By Lemma \ref{lem:liftsop}, there exists a sequence of parameters
  $x_1,x_2,\ldots,x_t \in \fm$ such that $I = (x_1,x_2,\ldots,x_t)R_\fm$.
  Since $R$ is $F$-rational, we see that $(x_1,x_2,\ldots,x_t)$ is tightly
  closed in $R$, hence also Frobenius closed in $R$.
  Since Frobenius closure commutes with localization \cite[Lem.\ 3.3]{QS17}, we
  see that $I$ is Frobenius closed in $R_\fm$.
\end{proof}

\begin{remark}\label{rem:implications}
  For Noetherian rings of prime characteristic $p > 0$, the relationship between
  $F$-injectivity and other classes of singularities can be summarized as
  follows:
  \[
    \begin{tikzcd}[column sep=7em]
      \text{regular} \dar[Rightarrow,swap]{\text{\cite[Thm.\ 6.2.1]{DS16}}}
      & & \text{Cohen--Macaulay}\\
      \text{strongly $F$-regular} \rar[Rightarrow]{\text{\cite[Cor.\
      3.7]{Has10}}} \dar[Rightarrow,swap]{\text{\cite[Cor.\ 3.7]{Has10}}}
      & \text{$F$-rational} \dar[Rightarrow]{\text{Proposition
      \ref{prop:frational}$(\ref{prop:frationalfinj})$}}
      \rar[Rightarrow]{\text{\cite[Thm.\
      4.2$(b)$]{HH94}}} \lar[Rightarrow,bend left=8,start
      anchor={[yshift=1pt]south west},end anchor={[yshift=4pt]south
      east}]{\substack{\text{$+$ Gorenstein}\\\text{\cite[Cor.\
      4.7$(a)$]{HH94}}\\\text{\cite[Thm.\ 8.8]{LS01}}}}
      \arrow[Rightarrow,dashed,bend left=16,end
      anchor=west]{ur}[description,pos=0.4]{%
      \text{\citeleft\citen{HH94}\citemid Thm.\
      4.2\citepunct \citen{Vel95}\citemid Prop.\ 0.10\citeright}}
      & \text{normal}\dar[Rightarrow]{\text{Def.}}\\[1.5em]
      \text{$F$-pure} \rar[Rightarrow]{\text{\cite[Cor.\ 6.8]{HR74}}}
      & \text{$F$-injective} \rar[Rightarrow]{\text{Corollary
      \ref{cor:finjwn}$(\ref{cor:finjwnitem})$}}
      \lar[Rightarrow,bend left=10]{\substack{\text{$+$
      quasi-Gorenstein}\\\text{\cite[Rem.\ 3.8]{EH08}}}}
      & \text{weakly normal}\dar[Rightarrow]{\text{Def.}}\\
      & & \text{reduced}\mathrlap{.}
    \end{tikzcd}
  \]
  Here, strong $F$-regularity is defined in terms of tight closure
  \cite[Def.\ 3.3]{Has10}, following Hochster.
  The dashed implication holds if either $R$ is the homomorphic image of a
  Cohen--Macaulay ring, or is locally excellent.
  We note that the
  implications with citations to the present paper may be well-known to experts.
\end{remark}

Finally, for the sake of completeness, we show that $F$-rationality descends for arbitrary Noetherian rings. This is shown in \cite{Vel95} assuming the existence of test elements.

\begin{proposition}[cf.\ {\cite[(6) on p.\ 440]{Vel95}}]\label{prop:frationaldescends}
  Let $\varphi\colon R \to S$ be a faithfully flat homomorphism of Noetherian
  rings of prime characteristic $p > 0$.
  If $S$ is $F$-rational, then $R$ is $F$-rational.
\end{proposition}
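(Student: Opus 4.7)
The strategy is to fix a parameter ideal $I = (x_1, x_2, \ldots, x_t) \subseteq R$ and an element $z \in I^*$ (tight closure in $R$), and to conclude $z \in I$ by transferring the tight closure witness through $\varphi$ and then invoking $F$-rationality of $S$ together with faithfully flat descent of ideals. By definition of tight closure there exists $c \in R^\circ$ with $cz^{p^e} \in I^{[p^e]}$ for all $e \gg 0$; applying $\varphi$ yields $\varphi(c)\varphi(z)^{p^e} \in (IS)^{[p^e]}$ for all $e \gg 0$. Once the two claims $(a)$ $\varphi(c) \in S^\circ$ and $(b)$ $IS$ is a parameter ideal in $S$ are verified, we will obtain $\varphi(z) \in (IS)^* = IS$ from $F$-rationality of $S$, and faithfully flat descent will give $z \in IS \cap R = I$.

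For $(a)$, going-down for flat homomorphisms \cite[Thm.\ 9.5]{Mat89} implies that every minimal prime $\fq \subseteq S$ contracts to a minimal prime of $R$: if $\fp' \subsetneq \fq \cap R$ were a strictly smaller prime of $R$, going-down would produce $\fq' \subsetneq \fq$ in $S$ with $\fq' \cap R = \fp'$, contradicting minimality of $\fq$. Hence $\varphi(R^\circ) \subseteq S^\circ$. For $(b)$, fix any prime $\fq \subseteq S$ containing $IS$. Since Krull's height theorem gives $\height IS_\fq \le t$ automatically, I only need $\height IS_\fq \ge t$, which will force the $t$ generators $\varphi(x_i)$ of $IS_\fq$ to be part of a system of parameters in $S_\fq$. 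Let $\fq'' \subseteq \fq$ be a minimal prime of $IS$ inside $\fq$ and set $\fp'' = \fq'' \cap R \supseteq I$. The same going-down argument as in $(a)$ shows that $\fp''$ must be minimal over $I$ in $R$, so $\height \fp'' = t$ because $I$ is a parameter ideal. Lifting a saturated chain of primes of length $t$ below $\fp''$ to $S$ by repeated applications of going-down produces a chain of length $t$ below $\fq''$, so $\height \fq'' \ge t$.

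The main obstacle I want to flag is that V\'elez's original argument in \cite{Vel95} appears to rely on the existence of test elements, which is not known for arbitrary Noetherian rings of prime characteristic. The plan above sidesteps test elements entirely, using only the definition of tight closure, standard features of flat ring maps (going-down and the behavior of parameter ideals under flat base change), and the fact that ideals descend under faithfully flat extensions.
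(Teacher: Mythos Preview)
Your proof is correct and follows essentially the same strategy as the paper's: verify that $\varphi(R^\circ)\subseteq S^\circ$, that $IS$ is a parameter ideal in $S$, and then use $F$-rationality of $S$ together with faithfully flat contraction of ideals. The only minor difference is in the justification of (b): the paper extends $x_1,\ldots,x_t$ to a full system of parameters $x_1,\ldots,x_d$ in $R_\fp$ and pushes it forward via the flat local map $R_\fp\to S_\fq$, whereas you use going-down to show every minimal prime $\fq''$ over $IS$ has height $t$ and then invoke the general inequality $\height \fq'' + \dim S_\fq/\fq'' \le \dim S_\fq$ to conclude the generators are part of a system of parameters in $S_\fq$. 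Both arguments are valid and amount to the same thing; your closing remark about avoiding test elements is exactly the point the paper makes as well.
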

\begin{proof}
Let $I$ be a parameter ideal in $R$. We first claim that $\varphi(I)S$ is a parameter ideal in $S$. Let $\fq \subseteq S$ be a prime ideal containing $\varphi(I)S$, and let $\fp \subseteq R$ be the contraction of $\fq$ in $R$. Since $I$ is a parameter ideal in $R$, we can write $I =
(x_1,x_2,\ldots,x_t)$, where the images of $x_1,x_2,\ldots,x_t$ in $R_\fp$ can be completed to a system of parameters $x_1,x_2,\ldots,x_t,x_{t+1},\ldots,x_d$ for $R_\fp$. Since the induced map $R_\fp \rightarrow S_\fq$ is also faithfully flat, the images of $x_1,x_2,\ldots,x_t,x_{t+1},\ldots,x_d$ in $S_\fq$ form part of a system of parameters of $S_\fq$. We therefore see that $\varphi(I)S$ is indeed a parameter ideal in $S$.

\par Returning to the proof of the Proposition, let $I$ be a parameter ideal in $R$. Since $\varphi(R^\circ) \subseteq S^\circ$ by the faithful flatness of $\varphi$, we have that
  \[
    \varphi(I)S \subseteq \varphi(I^*_R)S \subseteq \bigl(\varphi(I)S\bigr)^*_S
    = \varphi(I)S,
  \]
where the last equality holds by the $F$-rationality of $S$ and the fact that $\varphi(I)S$ is a parameter ideal. Since the leftmost and rightmost ideals are equal, we therefore have $\varphi(I)S = \varphi(I^*_R)S$. Contracting the expansions back to $R$, we then get $I = I^*_R$.
\end{proof}

\begin{remark}
Let $\cP$ be the property of Noetherian local rings being $F$-rational. In Proposition \ref{prop:frationaldescends} we showed that $\cP$ satisfies (\ref{property:descent}), and V\'elez showed that $\cP$ satisfies (\ref{property:openness}) \cite[Thm.\ 3.5]{Vel95}. Moreover, $\cP$ also satisfies (\ref{property:localization}) when $R$ is the homomorphic image of a Cohen--Macaulay ring (Proposition \ref{prop:frational}$(\ref{prop:frationalcm})$), and (\ref{property:ascentstar}) holds when $R$ and $S$ are locally excellent \cite[Thm.\ 3.1]{Vel95}. However, to the best of our knowledge, it is not known if the localization of an $F$-rational local ring is always $F$-rational for arbitrary Noetherian rings, or if an arbitrary 
$F$-rational Noetherian ring is Cohen--Macaulay.
\end{remark}

\end{document}